\DeclareMathSymbol{\shortminus}{\mathbin}{AMSa}{"39}
\newcommand\reallywidehat[1]{%
\savestack{\tmpbox}{\stretchto{%
  \scaleto{%
    \scalerel*[\widthof{\ensuremath{#1}}]{\kern.1pt\mathchar"0362\kern.1pt}%
    {\rule{0ex}{\textheight}}
  }{\textheight}%
}{2.4ex}}%
\stackon[-6.9pt]{#1}{\tmpbox}%
}
\def\ZZ{{\mathbb Z}}
        \def\Q{{\mathcal Q}}
\def\OO{{\mathcal O}}
\def\R{{\mathbf R}}
\def\F{\mathcal{F}}
\def\E{\mathcal{E}}
\def\G{\mathcal{G}}
\def\U{\mathcal{U}}
\def\cU{\mathcal{U}}
\def\cP{\mathcal{P}}
\def\Pic0{{\rm Pic}^0}
\def\Proj{{\rm Proj} \, }
\def\Aut0{{\rm Aut}^0}
\def\R{{\mathbf{R}}}
\def\*{{\underline *}}
\def\Supp{\mathrm{Supp}}
\def\Alb{{\rm Alb}\,}
\def\id{{\rm id}}
\def\lotimes{\stackrel{\mathbf L}{\otimes}}
\def\FM{{\rm FM}}
\DeclareMathOperator{\ad}{ad}
\theoremstyle{plain}
\newtheorem{theorem}{Theorem}[subsection]
\newtheorem{theoremalpha}{Theorem}
\newtheorem{proposition/example}[theorem]{Proposition/Example}
\newtheorem{proposition}[theorem]{Proposition}
\newtheorem{corollary}[theorem]{Corollary}
\newtheorem{lemma}[theorem]{Lemma}
\newtheorem{variant}[theorem]{Variant}
\newtheorem{claim}[theorem]{Claim}
\theoremstyle{definition}
\newtheorem{remark}[theorem]{Remark}
\newtheorem{conjecture/question}[theorem]{Conjecture/Question}
\newtheorem{remark/definition}[theorem]{Remark/Definition}
\newtheorem{notation/assumptions}[theorem]{Assumptions/Notation}
\newtheorem{setting/notation}[theorem]{Setting and Notation}
\numberwithin{equation}{section}
\theoremstyle{remark}
\begin{document}
\title{Derived invariance of the Albanese relative canonical ring}
 \author[ F. Caucci, L. Lombardi, G. Pareschi]{ Federico Caucci, Luigi Lombardi and  Giuseppe Pareschi}

\address{F. Caucci \\Dipartimento di Matematica,  
             Universit\`a degli Studi di Milano Statale\\Via Cesare Saldini 50, 20133 Milano, Italy}
 \email{federico.caucci@unimi.it }
\thanks{FC was  supported by the ERC Consolidator Grant ERC-2017-CoG-771507-StabCondEn.}

\address{L. Lombardi\\Dipartimento di Matematica, 
             Universit\`a degli Studi di Milano Statale\\Via Cesare Saldini 50, 20133, Milano, Italy}
\email{luigi.lombardi@unimi.it}
\thanks{LL was supported by GNSAGA, PSR Linea 4, PRIN 2020: Curves, Ricci flat Varieties and their Interactions}

\address{G. Pareschi\\Dipartimento di Matematica,
              Universit\`a di Roma Tor Vergata\\Viale della Ricerca Scientifica, 00133 Roma, Italy}
\email{pareschi@mat.uniroma2.it}
 \thanks{GP was partially supported by  the MIUR Excellence Department Project MatMod@TOV awarded to the Department of Mathematics, University of Rome Tor Vergata.}

\begin{abstract} 
We show the derived invariance of various geometric invariants of smooth complex projective varieties governed by the Albanese map, including the relative canonical ring and the class of the relative canonical model in a suitable variant of the Grothendieck ring of varieties.  Then we derive some applications to the derived invariance of Hodge numbers. 
\end{abstract}
\maketitle

\section{Introduction}\label{I0}

This paper is concerned with the understanding of which aspects 
 of the geometry of a smooth complex projective variety  are preserved under exact equivalences of its derived category.  One of the most fundamental results in this direction is the derived invariance of the canonical ring
\[R(X)  \; = \; \bigoplus_{m \geq 0} H^0 ( X , \omega_X^{\otimes m})\]
(Orlov \cite[Corollary 2.1.9]{orlov}). In the first part of this paper we prove a similar result for the relative canonical ring under the Albanese morphism $a_X \colon X\rightarrow \Alb X$
\[
\mathcal R(a_X)=\bigoplus_{m \geq 0}{a_X}_*\omega_X^{\otimes m}.
\]
The second part of the paper
is devoted to  applications to the derived invariance of various geometric data. In particular we focus on motivic classes of irregular varieties in suitable variants of the Grothendieck ring of varieties introduced in the paper \cite{itoetal}, and on Hodge numbers.

Turning to a more detailed presentation, we recall that an exact equivalence $\Phi \colon D(X)\buildrel\sim\over\rightarrow D(Y)$  induces canonically  an isomorphism  of algebraic groups  $\varphi \colon
 \Aut0 X\times \Pic0 X\buildrel\sim\over\rightarrow \Aut0Y\times \Pic0 Y$, referred to as the Rouquier isomorphism  \cite{rouquier}.  
By employing this  isomorphism, in \cite[Theorem A]{ps1} 
the authors show that the Picard varieties
 $\Pic0 X$ and $\Pic0 Y$ are (non-canonically)
 isogenous abelian varieties, but  are  in general not
  isomorphic. Hence the same happens for the Albanese varieties. 
 In conclusion the Albanese relative canonical rings $\mathcal R(a_X)$ and $\mathcal R(a_Y)$ are sheaves over varieties which are in general non-isomorphic. 
 
 Our main result is that nevertheless $\mathcal R(a_X)$ and $\mathcal R(a_Y)$ are somewhat preserved by the (dual) Rouquier isomorphism. 
This is accomplished in a series of steps, of independent interest, leading to an auxiliary version of the Albanese relative canonical ring,  better behaving  under exact equivalence, as described below.

 We first consider the Iitaka fibration of $X$ and more precisely one of its  models 
\[\widetilde X\rightarrow Z_X\>
\]
 such that $Z_X$ is smooth. The abelian subvariety $\Pic0 Z_X\buildrel{i}\over\hookrightarrow \Pic0 X$ does not depend on the model. Moreover the image of the Albanese map $a_{Z_X}\colon Z_X\rightarrow \Alb Z_X$ coincides with the image of the natural morphism $c_X$ (referred to as the \emph{Albanese-Iitaka map})
\begin{equation}\label{diagram}
\xymatrix{
 X\ar[r]^-{a_X}\ar[rd]_{c_X}&\Alb X\ar[d]^{p_X}\\
 &\Alb Z_X}
\end{equation}
where $p_X$ is the projection dual of  the inclusion $i$ (see Subsection \ref{R3}). Our first result is that the Albanese-Iitaka variety  and the image of the Albanese-Iitaka map are preserved by the (dual) Rouquier isomorphism. In fact a more precise result holds true. This can be summarized as follows (see Theorem \ref{step1} for the precise statement).

\begin{theoremalpha}\label{A} 
 Let $\Phi \colon D(X)\rightarrow D(Y)$ be an exact equivalence. Then: 
\begin{itemize}
\item[(a)] The subvariety $\Pic0 Z_X$ of $\Pic0 X$ is a derived invariant. More precisely, the Rouquier isomorphism sends $\Pic0 Z_X$ to $\Pic0 Z_Y$ inducing an isomorphism $\varphi \colon \Pic0 Z_X\buildrel\sim\over\longrightarrow \Pic0 Z_Y$. \\

\item[(b)] The dual isomorphism 
\begin{equation}\label{eq:isodualR}
\widehat\varphi \colon \Alb Z_Y\buildrel\sim\over\longrightarrow \Alb Z_X
\end{equation}
 sends  $c_Y(Y)$ to $c_X(X)$ \emph{(up to translation)}. \\

\item[(c)] More precisely, let
\[
\xymatrix{
 X\ar[r]^{s_X} \ar[r]\ar[rd]_-{c_X}&X^\prime\ar[d]^{c_X^\prime}\\
 &\Alb Z_X}
\] 
be the Stein factorization of the Albanese-Iitaka morphism $c_X$. 
By employing  the same notation for the Stein factorization of the Albanese-Iitaka morphism  of $Y$, there is an isomorphism 
$ \psi:Y^\prime\buildrel\sim\over\longrightarrow X^\prime$ such that the diagram
\[
\xymatrix{X^\prime\ar[d]^{c_X^\prime}&Y^\prime\ar[d]^{c_Y^\prime}\ar[l]^{\psi}_\sim\\
\Alb Z_X&\Alb Z_Y\ar[l]_-{\widehat\varphi}}
\]
is commutative.
\end{itemize}
\end{theoremalpha}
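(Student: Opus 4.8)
The plan is to reduce everything to part (a), which carries the real content: once the Rouquier isomorphism is shown to carry $\Pic0 Z_X\subseteq\Pic0 X$ isomorphically --- and with trivial $\Aut0$-component --- onto $\Pic0 Z_Y\subseteq\Pic0 Y$, part (b) follows by Cartier duality, and part (c) by upgrading that comparison from Picard varieties to relative canonical rings. One may assume $\kappa(X)\geq 0$ throughout, the case $\kappa(X)=-\infty$ being trivial.

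\emph{An intrinsic description of $\Pic0 Z_X$.} For $\alpha\in\Pic0 X$ set $M_\alpha:=\bigoplus_{m\geq 0}H^0(X,\omega_X^{\otimes m}\otimes\alpha)$, a finitely generated graded module over $R(X)$. The first step is the identity
\[
\Pic0 Z_X=\big(\text{neutral component of }\{\alpha\in\Pic0 X\ :\ \dim\Supp_{\,\Proj R(X)}M_\alpha=\kappa(X)\}\big),
\]
i.e.\ $\Pic0 Z_X$ is the connected component through $0$ of the locus where $\dim_\CC H^0(X,\omega_X^{\otimes m}\otimes\alpha)$ grows of order $m^{\kappa(X)}$. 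For ``$\supseteq$'', if $\alpha=f^*\beta$ comes from a smooth model $f\colon\widetilde X\to Z_X$ of the Iitaka fibration, the projection formula identifies $M_\alpha$ with $\bigoplus_m H^0(Z_X,f_*\omega_{\widetilde X}^{\otimes m}\otimes\beta)$, and the bigness of $f_*\omega_{\widetilde X}^{\otimes m}$ on $Z_X$ for $m\gg 0$ --- equivalently, the big line bundle on $Z_X$ furnished by the canonical bundle formula --- forces full-dimensional support. For the reverse inclusion, if $\alpha$ lies outside every torsion translate of $\Pic0 Z_X$, then its restriction to the general fibre $F$ is a line bundle which is neither trivial nor in the cyclic group generated by $\omega_F$; as $\kappa(F)=0$, this makes $h^0(F,\omega_F^{\otimes m}\otimes\alpha|_F)$ vanish for all large $m$ (reduce the torsion case to the non-torsion one by an \'etale base change over $F$), so $f_*(\omega_{\widetilde X}^{\otimes m}\otimes\alpha)$ eventually vanishes and $\dim\Supp M_\alpha<\kappa(X)$; passing to the neutral component then removes the finitely many torsion translates that may survive.

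\emph{Transport along $\Phi$ and proof of (a).} From $\Phi\circ S_X\cong S_Y\circ\Phi$ one gets $\Phi(\omega_X^{\otimes m}\otimes(-))\cong\omega_Y^{\otimes m}\otimes\Phi(-)$ (using $\dim X=\dim Y$), and, by definition of the Rouquier isomorphism, if $\varphi(\id_X,\alpha)=(\psi,\beta)$ then $\Phi$ intertwines $(-)\otimes\alpha$ on $D(X)$ with $\psi^*(-)\otimes\beta$ on $D(Y)$. Writing $\mathcal F:=\Phi(\OO_X)$, these give a natural isomorphism of graded $R(Y)$-modules
\[
M_\alpha\ \cong\ \bigoplus_{m\geq 0}\Hom_{D(Y)}\!\big(\mathcal F,\ \omega_Y^{\otimes m}\otimes\beta\otimes\psi^*\mathcal F\big),
\]
so the support dimension is unchanged. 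The goal becomes: this support has dimension $\kappa(Y)$ if and only if $\psi=\id$ and $\beta$ lies in (a torsion translate of) $\Pic0 Z_Y$; together with the description above applied on both sides and the fact that $\varphi$ is an isomorphism of algebraic groups, this yields $\varphi(\{\id\}\times\Pic0 Z_X)=\{\id\}\times\Pic0 Z_Y$ and hence the isomorphism of (a). The bound ``$\leq\kappa(Y)$ always'' is easy: estimate the hypercohomology of $R\mathcal{H}om(\mathcal F,\psi^*\mathcal F)\otimes\omega_Y^{\otimes m}\otimes\beta$ via the growth estimate of the previous step applied to its (fixed) cohomology sheaves. The delicate point, and the main obstacle, is the \emph{equality} case, i.e.\ controlling the auxiliary object $\mathcal F=\Phi(\OO_X)$: for $\psi=\id$ one wants $\OO_Y$ to split off $R\mathcal{H}om(\mathcal F,\mathcal F)$ so that $\bigoplus_m H^0(Y,\omega_Y^{\otimes m}\otimes\beta)$ is a direct summand of the right-hand side while its complement grows of lower order; for $\psi\neq\id$ one rules out full-order growth using that $\Hom_{D(Y)}(\mathcal F,\psi^*\mathcal F)=H^0(X,\alpha'')$ for a \emph{nontrivial} $\alpha''\in\Pic0 X$, plus a filtration of $\mathcal F$ compatible with pullback from the Iitaka base. (When $\omega_X$ or $\omega_X^{-1}$ is ample this is immediate from Bondal--Orlov, $\mathcal F$ being a shifted line bundle; all the difficulty is in the general case.)

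\emph{Parts (b) and (c).} Dualising the isomorphism of (a) produces $\widehat\varphi\colon\Alb Z_Y\xrightarrow{\sim}\Alb Z_X$, which is \eqref{eq:isodualR}. By Orlov's theorem $R(X)\cong R(Y)$ as graded rings, so $\Proj R(X)$ and $\Proj R(Y)$ are identified and $Z_X$, $Z_Y$ become birational; consequently $c_X(X)=a_{Z_X}(Z_X)$ and the Stein factorisation $X\to X'\to\Alb Z_X$ of $c_X$ coincide, as abstract data, with the corresponding data for $Y$. It remains to see these identifications are realised by $\widehat\varphi$ up to translation: the isomorphism $M_\alpha\cong\bigoplus_m\Hom_{D(Y)}(\mathcal F,\omega_Y^{\otimes m}\otimes\beta\otimes\mathcal F)$ of the previous step, for $\alpha\in\Pic0 Z_X$, is compatible through $\varphi$ with the $\Pic0 Z$-module structures coming from the Iitaka fibrations, and passing to relative $\mathrm{Proj}$ over $\Alb Z_X$ (equivalently, to the Stein factorisations) turns this compatibility into the commutative square of (c) once a base point is fixed. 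The residual translation in (b) is precisely the failure of $\Phi$ to send $\OO_X$ to $\OO_Y$ --- it sends it to $\mathcal F$, whose support meets every fibre of $c_Y$ but off-centre --- so the images of $X$ and $Y$ in $\Alb Z_X$ agree only after a translation.
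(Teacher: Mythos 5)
Your proof of part (a) has a genuine gap at exactly the point you flag as ``the main obstacle'': the transport step. Reducing the problem to the growth of $\bigoplus_m\Hom_{D(Y)}(\mathcal F,\omega_Y^{\otimes m}\otimes\beta\otimes\psi^*\mathcal F)$ with $\mathcal F=\Phi(\OO_X)$ leaves you needing to control an arbitrary object of $D(Y)$ (splitting off $\OO_Y$ from $\mathbf R\mathcal{H}om(\mathcal F,\mathcal F)$, a ``filtration of $\mathcal F$ compatible with pullback from the Iitaka base''), and none of this is established; outside the Bondal--Orlov regime there is no known handle on $\Phi(\OO_X)$. The paper avoids this entirely. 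It never looks at $\Phi(\OO_X)$: instead it uses the convolution equivalence $\Phi_{\E\boxtimes\ad\E}\colon D(X\times X)\to D(Y\times Y)$ applied to $\gamma_{f*}(\omega_X^{\otimes m}\otimes P_\alpha)$, which by \cite[Prop.~3.1]{lombardi-derived} shows that \emph{every} $\alpha\in V^0(X,\omega_X^{\otimes m})$ is automatically $R$-stable with $H^0(X,\omega_X^{\otimes m}\otimes P_\alpha)\cong H^0(Y,\omega_Y^{\otimes m}\otimes Q_{\varphi_\E(\alpha)})$, hence $\varphi_\E$ matches the loci $V^0(X,\omega_X^{\otimes m})$ and $V^0(Y,\omega_Y^{\otimes m})$. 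Combined with the Chen--Hacon/Hacon--Popa--Schnell theorem that for $m\ge 2$ these loci are \emph{exactly} finite disjoint unions of torsion translates of $p_X^*\Pic0 Z_X$ (resp.\ $p_Y^*\Pic0 Z_Y$), part (a) follows by taking the identity component of the group generated. Your intrinsic characterization of $\Pic0 Z_X$ via the growth order of $h^0(\omega_X^{\otimes m}\otimes\alpha)$ is essentially a repackaging of that structure theorem, but it does not by itself make the derived-invariance step go through.

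Part (c) is also not proved by your argument. Knowing that $R(X)\cong R(Y)$ and that $Z_X$, $Z_Y$ are birational does not produce an isomorphism $\psi\colon Y'\to X'$ of the Stein factorizations compatible with $\widehat\varphi$: $X'=\mathrm{Spec}_{\Alb Z_X}(c_{X*}\OO_X)$ is not a relative $\Proj$ of canonical rings, and ``compatibility of module structures'' plus ``fixing a base point'' does not identify it with $Y'$ over $\widehat\varphi$. The paper's proof of (c) is a separate geometric argument (Theorem \ref{step1}, after Kawamata): one shows that $(s_X\times s_Y)(\Supp\E)$ projects to both $X'$ and $Y'$ with connected fibers, and that no curve is contracted by either projection because the Rouquier relation \eqref{ps}, pushed forward by $s_X\times s_Y$ after twisting by an ample $L$, forces any contracted curve to be contracted by the finite map $c'_Y$; hence that image is the graph of the desired isomorphism $\psi$, and the compatibility with $\widehat{\varphi_\E}$ follows by restricting the same relation to the graph and taking determinants. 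Nothing in your proposal substitutes for this step.
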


Note: in the upcoming paper \cite[Theorem 3.0.1 $(iii)$ and Remark 3.0.5]{calo}, 
the authors  prove further that the general fibers of   
$s_X$ and $s_Y$ are derived equivalent. Moreover, the equivalence 
$\Phi$ becomes  a relative   equivalence
when restricted to the loci where both the morphisms $\psi^{-1} \circ s_X$ and $s_Y$ are flat
(however these further properties are not necessary in  this paper).

The previous theorem enables us to prove the derived invariance of the relative canonical ring 
attached to the Albanese-Iitaka morphism:
$$
\mathcal R(c_X) \; = \; \bigoplus_{m\ge 0}{c_X}_*\omega_X^{\otimes m}.
$$
More generally, we prove the invariance in the derived category of the complex
\begin{equation}\label{eq:relstr}
\mathcal {HA}(c_X) \; = \;  \bigoplus_{m \in \mathbb Z , \, \, q \geq 0} \mathbf R\,  {c_X}_*\bigl(   \Omega_X^q \otimes \omega_X^{\otimes m} \bigr) [q],\>
\end{equation}
whose cohomology can be thought as a relative version of the 
bigraded algebra 
$$\bigoplus_{m \in \mathbb Z, \, k }\;\; \bigoplus_{p-q=k} H^p ( X , \Omega_X^q \otimes \omega_X^{\otimes m})$$ 
in \cite[Corollary 2.1.10]{orlov}.
We refer to  \S\ref{HHoch} and \S\ref{Hcan} for the proof of the following theorem.

\begin{theoremalpha}\label{B} 
Let $\Phi \colon D(X) \to D(Y)$ be an exact equivalence.
Then the  isomorphism \eqref{eq:isodualR} induces:
\begin{itemize}
\item[(a)] an isomorphism of $\OO_{\Alb Z_Y}$-algebras 
\[\widehat{\varphi}^*\mathcal R(c_X) \; \cong \; \mathcal R(c_Y);\] 

\item[(b)]
isomorphisms of $\OO_{\Alb Z_Y}$-modules
\[
\bigoplus_{p-q=k} \widehat{\varphi}^* R^{p}{c_X}_*(\Omega_X^q\otimes \omega_X^{\otimes m})  \; \cong\;
\bigoplus_{p-q=k}R^{p}{c_Y}_*(\Omega_Y^q\otimes \omega_Y^{\otimes m}) \quad \quad  \forall \,  m, k \in \mathbb Z.
\]

\end{itemize}
\end{theoremalpha}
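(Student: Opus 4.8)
The plan is to deduce Theorem~\ref{B} from a relative version, over the Albanese--Iitaka varieties, of Orlov's Hochschild-homology invariance \cite[Corollary 2.1.10]{orlov}, with the bookkeeping carried out through the (dual) Rouquier isomorphism furnished by Theorem~\ref{A}. \emph{Hochschild reformulation:} by the Hochschild--Kostant--Rosenberg isomorphism $\mathbf{L}\Delta_X^*\,\OO_{\Delta_X}\cong\bigoplus_{q\ge 0}\Omega_X^q[q]$ one rewrites
\[
\mathcal{HA}(c_X)\;\cong\;\bigoplus_{m\in\ZZ}\mathbf{R}\,{c_X}_*\bigl(\mathbf{L}\Delta_X^*\OO_{\Delta_X}\otimes\omega_X^{\otimes m}\bigr),
\]
i.e.\ the direct image along $c_X$ of the Hochschild homology complex of $X$, twisted by the powers of $\omega_X$ and summed over $m$. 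The hypercohomology sheaves of the $(q,m)$-summand are exactly the sheaves $R^p{c_X}_*(\Omega_X^q\otimes\omega_X^{\otimes m})$ of part (b), while the sub-$\OO_{\Alb Z_X}$-algebra cut out by $q=p=0$ is the relative canonical ring $\mathcal R(c_X)$ of part (a). The advantage of this formula is that its right-hand side is assembled only from the Fourier--Mukai kernel $\OO_{\Delta_X}$, the morphism $c_X$ and the Serre functor $S_X=-\otimes\omega_X[\dim X]$, so it can be transported along an exact equivalence, once that equivalence is made compatible with $c_X$.

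\emph{Passage to a relative equivalence.} Write $\widehat A:=\Pic0 Z_X$, identified with $\Pic0 Z_Y$ through the Rouquier isomorphism of Theorem~\ref{A}(a), and let $P_X$ (resp.\ $P_Y$) be the pull-back to $X\times\widehat A$ (resp.\ $Y\times\widehat A$) of the Poincaré bundle along $c_X$ (resp.\ $c_Y$). Under Fourier--Mukai duality $\Alb Z\leftrightarrow\widehat A$ the object $\mathcal{HA}(c_X)$ corresponds to the $\widehat A$-family
\[
\bigoplus_{m}\mathbf{R}\,{\mathrm{pr}_{\widehat A}}_*\bigl(\mathrm{pr}_X^*(\mathbf{L}\Delta_X^*\OO_{\Delta_X}\otimes\omega_X^{\otimes m})\otimes P_X\bigr)\in D(\widehat A),
\]
whose fibre at $\alpha$ computes $\bigoplus_q H^\bullet(X,\Omega_X^q\otimes\omega_X^{\otimes m}\otimes c_X^*\alpha)$, the $(c_X^*\alpha)$-twisted Hochschild homology of $X$ with coefficients in $\omega_X^{\otimes m}$. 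The essential step is to upgrade $\Phi$, with Fourier--Mukai kernel $\mathcal E\in D(X\times Y)$, to a $\widehat A$-linear equivalence $\widetilde\Phi\colon D(X\times\widehat A)\to D(Y\times\widehat A)$ restricting on each fibre to the twist of $\Phi$ prescribed by the Rouquier isomorphism; concretely $\widetilde\Phi$ has as kernel a twist of $\mathcal E$ by $P_X$ and $P_Y$ over $\widehat A$, and it is an equivalence because it is a fibrewise equivalence. This uses that the Rouquier isomorphism carries $\Pic0 Z_X$ into $\Pic0 Z_Y$ (Theorem~\ref{A}(a)); the $\Aut0$-component of $\varphi|_{\Pic0 Z_X}$, possibly non-trivial, acts trivially on $\Alb Z_Y$ — ultimately because $Z_X$ and $Z_Y$ have maximal Kodaira dimension, so $\Aut0 Z_X=\Aut0 Z_Y=1$ — hence becomes invisible after $\mathbf{R}\,{c_Y}_*$, leaving only the fixed translation of Theorem~\ref{A}(b), which is absorbed into the choice of base points defining $\widehat\varphi$. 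Transporting $\widetilde\Phi$ back through Fourier--Mukai duality on $\Alb Z$ is exactly what makes the isomorphism $\widehat\varphi\colon\Alb Z_Y\to\Alb Z_X$ of \eqref{eq:isodualR} enter.

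\emph{Relative Orlov and conclusion.} A $\widehat A$-linear equivalence commutes with the relative Serre functor over $\widehat A$, which on $D(X\times\widehat A)$ is simply $-\otimes\mathrm{pr}_X^*\omega_X[\dim X]$ since $\omega_{\widehat A}\cong\OO_{\widehat A}$. Conjugating the relative diagonal by $\widetilde\Phi$, iterating the Serre twist $m$ times and forming the relative Hochschild complex twisted by $P_X$, one obtains $\OO_{\widehat A}$-linear isomorphisms, respecting the $m$-grading, between the displayed $\widehat A$-family for $X$ and its $Y$-counterpart. Since composition of Serre functors is carried to composition, these isomorphisms are multiplicative in $m$, which after Fourier--Mukai duality on $\Alb Z$ and pull-back along $\widehat\varphi$ gives the algebra isomorphism in part (a); and since the HKR summand $\Omega_X^q[q]$ is preserved, taking hypercohomology sheaves gives the $(p-q=k)$-graded isomorphisms in part (b).

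\emph{Main obstacle.} The crux is the second step. In general the Rouquier isomorphism mixes $\Pic0$ with $\Aut0$, so $\Phi$ has no reason to descend to a relative equivalence over any abelian variety; it is precisely Theorem~\ref{A} — the good behaviour of $\varphi$ on $\Pic0 Z_X$, the matching of the Albanese--Iitaka images, and the identification of the Stein factorizations — that makes the descent over $\widehat A$ possible, with only a fixed translation of $\Alb Z_Y$ left over. Arranging the fibrewise equivalences, the Poincaré twists and this translation to be simultaneously compatible with a single $\widehat A$-linear $\widetilde\Phi$ is where the real work lies; granted $\widetilde\Phi$, the last step is a relative rerun of \cite[Corollary 2.1.10]{orlov}.
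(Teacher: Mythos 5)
Your overall strategy --- globalize the pointwise Rouquier-compatibility of $\Phi$ over $\widehat A=\Pic0 Z_X\cong\Pic0 Z_Y$ into a family statement, pass through Fourier--Mukai duality on the abelian varieties to land on $\Alb Z$, and use HKR for part (b) --- is the same as the paper's. But the two points you yourself flag as ``where the real work lies'' are exactly the two nontrivial steps, and neither is carried out. First, the existence of your $\widehat A$-linear equivalence $\widetilde\Phi$ (equivalently, of the isomorphism of families of kernels) does not follow from the fibrewise statement: by see-saw, knowing that the restriction to each fibre over $\alpha\in\widehat A$ is the prescribed twist of $\E$ only determines the family up to tensoring by the pullback of an unknown line bundle on $\widehat A$. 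The paper's Theorem \ref{thm:invpoi} kills this ambiguity by restricting to a slice $\{(y_0,y_0)\}\times\Pic0 Z_Y$ with $c_Y(y_0)=e_Y$ and computing that restriction using the fact, extracted from Theorem \ref{step1}, that $\Supp(\E)$ lies over the graph of $\psi$, so that $(\E\lotimes\ad\E)\lotimes\OO_{X\times\{y_0\}}$ is supported on $c_X^{-1}(e_X)$. Without this normalization the transported family differs from the correct one by an unidentified twist, and the comparison with $\widehat\varphi^*\mathcal R(c_X)$ is not established. (Your side remark that the $\Aut0$-component is handled because $\Aut0 Z_X$ is trivial is also off target: what is needed, and what Theorem \ref{A}(a) together with \cite[Proposition 3.1]{lombardi-derived} provides, is that the points of $\Pic0 Z_X$ are $R$-stable, i.e.\ $\varphi(\id_X,\alpha)=(\id_Y,\beta)$; the Iitaka base $Z_X$ need not have trivial $\Aut0$.)

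Second, the multiplicativity in (a). You assert it follows because ``composition of Serre functors is carried to composition,'' i.e.\ a relative rerun of Orlov's argument. This is precisely the step the paper states does not follow as in the absolute case: the multiplication on $\mathcal R(c_X)$ is a map of sheaves on $\Alb Z_X$, not a composition of morphisms in a category that $\widetilde\Phi$ manifestly respects. The paper's proof of Theorem \ref{thm:invmult} applies the symmetric FMP transform to the multiplication maps, invokes the generic vanishing theorems of Hacon and Popa--Schnell to know that $\FM_{\Alb Z_X}({c_X}_*\omega_X^{\otimes h})$ is a sheaf in degree $0$ for $h>0$, identifies the fibres of the transformed multiplication with the duals of the paracanonical multiplication maps (derived invariant by Theorem \ref{para:theorem}), and then needs a Nakayama argument together with the reducedness of the scheme-theoretic supports (Appendix, \S\ref{rem:support}) to pass from fibrewise agreement to an isomorphism of maps of sheaves. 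None of this input appears in your proposal, so part (a) beyond the graded-module isomorphism remains unproved.
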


Now  it is time to state the announced result regarding  the derived  invariance of the relative canonical algebra with respect to the Albanese map.  Roughly speaking, we prove that the Albanese relative canonical  algebra is the pullback of a derived invariant algebra on the Albanese-Iitaka variety ``weighted" by a derived invariant (finite) semigroup of topologically trivial line bundles. 

Less informally, we prove two statements. The first one (which is independent on derived equivalences), is  Theorem \ref{C}(a) below. Roughly, its content is that the relative canonical ring $\mathcal R(a_X)$ is very close to being the pullback of a (finitely generated)  graded $\OO_{\Alb Z_X}$-algebra, denoted by $\mathcal U_X$,   via the quotient map $p_X$ of \eqref{diagram}. More precisely, the graded components of the algebra $\mathcal U_X$ are endowed with a canonical decomposition, and the relative canonical algebra $\mathcal R(a_X)$ is obtained from the pullback of $\mathcal U_X$  by twisting the pullback of the summands  appearing in such decompositions by  torsion line bundles belonging to a certain finite subgroup $S_X$ of $\Pic0 X$. 

The second statement is Theorem \ref{C}(b), telling  that both the twisting semigroup $S_X$ and the algebra $\U_X$ are derived invariants. More precisely $S_X$ and $\U_X$ are sent to  $S_Y$ and $\U_Y$, respectively,  by the Rouquier  isomorphism  and by the dual isomorphism (\ref{eq:isodualR}).  
This second statement is in fact a variant of Theorem \ref{B}(a). The precise statement (see Theorem \ref{thmbasechange}) is as follows.

\begin{theoremalpha}\label{C} 

\begin{itemize}
\item[(a)] For any smooth projective variety $X$, and for all integers $m>0$, there exists an essentially canonical decomposition 
\[
{a_X}_*\omega_X^{\otimes m}\cong \bigoplus_{i=1}^{N_X(m)} \big(p_X^*\G_{X,m,i} \big) \otimes P_{\alpha_{X,m,i}}\>,
\] 
where $P_{\alpha_{X,m,i}}$ are line bundles parametrized by torsion points  $\alpha_{X,m,i}\in\Pic0 X$.
Moreover, the sheaves 
\[\mathcal U_X^m: =\bigoplus_{i=1}^{N_X(m)}\G_{X,m,i}
\]
 are the graded pieces of a finitely generated $\OO_{\Alb Z_X}$-algebra $\mathcal U_X$.\\

\item[(b)] Let $\Phi \colon D(X) \to D(Y)$ be an exact equivalence. For every $m>0$ we have $N_X(m)=N_Y(m)$. Moreover, by 
 keeping the setting and notation of Theorems \ref{A} and \ref{B}, we obtain
\[\varphi(\mathrm{id}_X,\alpha_{X,m,i})=(\mathrm{id}_Y,\alpha_{Y,m,i})
\]
(up to reordering). 
Finally, there is an isomorphism 
\[\widehat{\varphi}^*\mathcal U_X\cong \mathcal U_Y
\]
as $\OO_{\Alb Z_Y}$-graded algebras.
\end{itemize}
\end{theoremalpha}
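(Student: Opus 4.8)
\emph{Part (a).} The plan is to start from the Chen--Jiang decomposition and refine it by means of the Iitaka fibration. For each $m>0$ one has a decomposition ${a_X}_*\omega_X^{\otimes m}\cong\bigoplus_i\bigl(p_{m,i}^*\mathcal F_{m,i}\bigr)\otimes P_{\alpha_{X,m,i}}$, where $p_{m,i}\colon\Alb X\twoheadrightarrow A_{m,i}$ is a quotient of abelian varieties, $\mathcal F_{m,i}$ is a nonzero M-regular sheaf on $A_{m,i}$, and $\alpha_{X,m,i}\in\Pic0 X$ is torsion. The key refinement is that each $p_{m,i}$ factors through $p_X$: since $\mathcal F_{m,i}$ is M-regular, the coset $-\alpha_{X,m,i}+p_{m,i}^*\Pic0 A_{m,i}$ is contained in the cohomological support locus $V^0(\omega_X^{\otimes m})$, and by the structure theory of these loci and its relation to the Iitaka fibration (Green--Lazarsfeld, Simpson, Ein--Lazarsfeld, Chen--Hacon) the abelian part of every component of $V^0(\omega_X^{\otimes m})$ is contained in $\Pic0 Z_X$; hence $p_{m,i}^*\mathcal F_{m,i}=p_X^*\mathcal G_{X,m,i}$ with $\mathcal G_{X,m,i}:=q_{m,i}^*\mathcal F_{m,i}$ for the induced surjection $q_{m,i}\colon\Alb Z_X\twoheadrightarrow A_{m,i}$, and the decomposition is essentially canonical because the Chen--Jiang decomposition is unique up to isomorphism. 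Next, let $S_X\subseteq\Pic0 X$ be the finite subgroup generated by all the $\alpha_{X,m,i}$; finiteness holds because $\mathcal R(a_X)$ is a finitely generated sheaf of $\OO_{\Alb X}$-algebras (a relative form of the finite generation of the canonical ring). The natural multiplication of $\mathcal R(a_X)$ is compatible with the torsion twists — which are line bundles pulled back along $a_X$, so the compatibility can be checked after passing to the étale cover trivialising them, where it becomes equivariance for the Galois action — and therefore descends to multiplication maps $\mathcal G_{X,m,i}\otimes\mathcal G_{X,m',j}\to\mathcal G_{X,m+m',k}$ (nonzero only when $\alpha_{X,m,i}+\alpha_{X,m',j}=\alpha_{X,m+m',k}$), giving $\mathcal U_X$ the structure of a graded $\OO_{\Alb Z_X}$-algebra. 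Its finite generation follows from that of $\mathcal R(a_X)$ and $|S_X|<\infty$ by a standard argument (faithfully flat descent along $p_X$, together with the fact that the generators of $\mathcal R(a_X)$ occur in finitely many degrees).

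\emph{Part (b).} Apply (a) to both $X$ and $Y$; the task is to match $N_X(m)$, the $\alpha_{X,m,i}$ and $\mathcal U_X$ with their $Y$-counterparts across the Rouquier isomorphism. The device is the étale cover $\pi_X\colon X'\to X$ associated to the finite subgroup $S_X$, for which $\pi_{X*}\OO_{X'}=\bigoplus_{\beta\in S_X}a_X^*P_{-\beta}$; consequently
\[
(c_X\circ\pi_X)_*\omega_{X'}^{\otimes m}\;=\;\bigoplus_{\beta\in S_X}{c_X}_*\bigl(\omega_X^{\otimes m}\otimes a_X^*P_\beta\bigr),
\]
and inserting the decomposition from (a) together with the identity $p_{X*}\bigl(p_X^*(-)\otimes P_\gamma\bigr)=(-)\otimes p_{X*}P_\gamma$ (which annihilates the summands with $\gamma\notin\Pic0 Z_X$) shows that this $S_X$-graded $\OO_{\Alb Z_X}$-algebra recovers all of the data $\{(\mathcal G_{X,m,i},\alpha_{X,m,i})\}$: the summand $\mathcal G_{X,m,i}$ reappears, untwisted, in the $\beta=-\alpha_{X,m,i}$ component. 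Since $c_X\circ\pi_X$ factors through the Albanese--Iitaka morphism $c_{X'}$ of $X'$ (one has $\Alb Z_{X'}\twoheadrightarrow\Alb Z_X$), the above algebra is the pushforward of $\mathcal R(c_{X'})$ along $\Alb Z_{X'}\to\Alb Z_X$, and $\mathcal R(c_{X'})$ is a derived invariant by Theorem \ref{B} applied to $X'$ — \emph{provided} that, compatibly with $\Phi$, the variety $X'$ is derived equivalent to the cover $Y'$ of $Y$ associated to $S_Y$. Granting this, Theorems \ref{A} and \ref{B} transport the entire package from $X'$ to $Y'$, and descent along the identified Galois groups $\widehat{S_X}\cong\widehat{S_Y}$ — using the canonicity of the decomposition from (a) to know that the degreewise isomorphisms are compatible with multiplication — yields $N_X(m)=N_Y(m)$, $\varphi(\mathrm{id}_X,\alpha_{X,m,i})=(\mathrm{id}_Y,\alpha_{Y,m,i})$, and the graded $\OO_{\Alb Z_Y}$-algebra isomorphism $\widehat\varphi^*\mathcal U_X\cong\mathcal U_Y$.

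\emph{The main obstacle} is precisely the italicised proviso: one must show that the Rouquier isomorphism sends $\{\mathrm{id}_X\}\times S_X$ into $\{\mathrm{id}_Y\}\times\Pic0 Y$ — so that $X'$ indeed corresponds to an honest étale cover of $Y$ rather than to a twisted or stacky variant — and that the image is exactly $\{\mathrm{id}_Y\}\times S_Y$. This is not formal, because $\varphi$ can in principle mix the factors $\Aut0$ and $\Pic0$. I would attack it by giving $S_X$ a description that is manifestly preserved by $\Phi$: it is generated by $\Pic0 Z_X$ (handled by Theorem \ref{A}) together with the torsion twists of the pluricanonical bundles, and the latter are tied to the canonical bundle — hence to the Serre functor, which $\Phi$ intertwines — so they are forced into the $\Pic0$-direction. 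Making this precise, and then checking that the bijections of sheaves produced in part (b) genuinely assemble into an isomorphism of graded algebras rather than a mere collection of isomorphisms in each degree, is where the real work lies.
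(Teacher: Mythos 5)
Part (a) of your proposal is essentially the paper's argument: the Chen--Jiang decomposition, the comparison of $V^0(X,\omega_X^{\otimes m})$ with the Chen--Hacon/Hacon--Popa--Schnell description \eqref{eq:non-van1}--\eqref{eq:non-van2} to force every quotient $\rho_{m,i}$ to factor through $p_X$, and the finite generation of $\mathcal R(a_X)$ to control the torsion twists (this is Proposition \ref{prop:simplified}, Remark \ref{rem:finite-gen} and Setting/Notation \ref{notation}). One small correction: you should gather the Chen--Jiang summands whose $V^0$-loci lie in the \emph{same} translate of $p_X^*\Pic0 Z_X$ into a single $\G_{X,m,i}$; otherwise your index set is not $N_X(m)$ and the $\alpha_{X,m,i}$ are not distinct modulo $\Pic0 Z_X$, which you need later.

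The genuine gap is the one you yourself flag in part (b): the $R$-stability of the torsion points $\alpha_{X,m,i}$. This is not "where the real work lies" in the sense of requiring a new idea about the Serre functor --- it is already available, and your proof cannot proceed without invoking it explicitly. The points $\alpha_{X,m,i}$ lie in the group $G_X=\bigcup_m V^0(X,\omega_X^{\otimes m})$, and by Lombardi's result recalled in \S\ref{R2} (see \eqref{eq:isoh0v0} and \eqref{V0}) \emph{every} point of $V^0(X,\omega_X^{\otimes m})$ is $R$-stable; since the $R$-stable points form a subgroup, so is all of $G_X$, hence $S_X$ (resp. $H_X$). Once this is in hand, your detour through the \'etale covers $X'\to X$ and $Y'\to Y$ is both unnecessary and problematic: it requires a derived equivalence $D(X')\cong D(Y')$ compatible with $\Phi$, which is a further unproven claim (and whose construction would itself presuppose the $R$-stability you are trying to establish). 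The paper instead works directly with the twisted pushforwards ${c_X}_*(\omega_X^{\otimes m}\otimes P_{-\alpha_{m,j}})\cong\G_{X,m,j}$ (Proposition \ref{prop:simplified}(b)) and applies the twisted global FMP argument (Variant \ref{var:variant2}) to get $\widehat{\varphi_\E}^*\G_{X,m,j}\cong\G_{Y,m,j}$ degree by degree. Finally, your treatment of the multiplicativity of $\widehat\varphi^*\mathcal U_X\cong\mathcal U_Y$ ("descent ... using the canonicity of the decomposition") assumes precisely what has to be proved: compatibility of the degreewise isomorphisms with multiplication is the hardest step, and the paper proves it by applying the FMP transform to the multiplication maps, identifying their fibers with the (derived-invariant) multiplication of the paracanonical ring (Theorem \ref{para:theorem}), and concluding by Nakayama using the reducedness of the scheme-theoretic supports (Theorem \ref{thm:invmult} and the Appendix). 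Without an argument of this kind your algebra isomorphism is only an isomorphism of graded modules.
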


Note that Orlov's theorem on the derived invariance of the canonical ring is recovered both from Theorem \ref{C} and Theorem \ref{B}(a) by taking $H^0$. 

Finally, we remark that in many circumstances the Rouquier isomorphism already sends $\Pic0 X$ to $\Pic0 Y$.  For example this happens when $\Aut0 X$ is affine, or $\chi(X,\OO_X)\ne 0$.  In this case the statement of Theorem \ref{C} is enormously simplified, because the Rouquier isomorphism  induces a dual isomorphism $\widehat{\varphi}:\Alb Y\rightarrow \Alb X$ so that $\widehat{\varphi}^*\mathcal R(a_X)\cong \mathcal R(a_Y)$, see Corollary \ref{cor:invariance of ra}.

In the second part of the paper we draw the following applications of Theorem \ref{C}. 

\noindent{\bf Kodaira dimension. } It follows from  Theorem \ref{C} that the Kodaira dimension of a general fiber of the Albanese map is a derived invariant. More precisely (see Corollary \ref{cor:koddim}), we obtain an  isomorphism 
between the  canonical rings of the general fibers over points of $\Alb X$ (resp. $\Alb Y$) mapping to points of $\Alb Z_X$ (resp. $\Alb Z_Y$) corresponding  via the isomorphism $\widehat{\varphi}$.

\noindent{\bf Motivic classes and Hodge numbers. } More refined derived invariants  are obtained  in the case where the canonical sheaf is relatively big (respectively big and nef) with respect to the Albanese map. In the spirit of a recent paper of Ito-Miura-Okawa-Ueda \cite{itoetal}, we show  that in this case suitable motivic classes attached to a pair of 
  derived equivalent varieties $X$ and $Y$ are equal in suitable variants of the Grothendieck ring of varieties \emph{modulo  isogeneous abelian varieties} (see \S \ref{E1} and \S \ref{lastsection0}). More precisely,
we consider the relative canonical model (\cite{bchm}) with respect to the Albanese map
\[
X^{can}_{a}=\Proj \mathcal R(a_X)\rightarrow \Alb X\>.
\]

\begin{theoremalpha}\label{Di} Let $X$ and $Y$ be smooth projective varieties over $\mathbb C$ such that $D(X)\cong D(Y)$. Then:

\begin{itemize}
\item[(a)] \ $[X^{can}_{a}] = [Y^{can}_{a}]$  in the Grothendieck ring $K^\prime_0(\mathrm{Var}/{\mathbb{C}})$ of varieties  over $\mathbb C$  modulo isogeneous abelian varieties  \emph{(see Theorem  \ref{thmGroclass})}. \\

\item[(b)]  If $\omega_X$ is $a_X$-big and nef, then $[X]=[Y]$ in the localized and completed  Grothendieck ring modulo isogeneous abelian varieties $\widehat{\mathcal{M}_{\mathbb{C}}^{\prime}}$
 \emph{(see Theorem \ref{invminimal})}. 
\end{itemize}
\end{theoremalpha}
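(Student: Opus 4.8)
The plan is to deduce both parts from Theorem \ref{C} by writing the class of $X^{can}_a$ in terms of data that is visibly derived invariant modulo isogeny. Keep the notation $B_X:=\ker\bigl(p_X\colon\Alb X\to\Alb Z_X\bigr)$ (an abelian variety, since $p_X$ is dual to the inclusion $i\colon\Pic0 Z_X\hookrightarrow\Pic0 X$), $W_X:=\Proj_{\Alb Z_X}\mathcal U_X$ with $\mathcal U_X$ the algebra of Theorem \ref{C}(a), and $S_X\subset\Pic0 X$ the finite subgroup generated by the torsion classes $\alpha_{X,m,i}$ (as in the discussion preceding Theorem \ref{C}); let $S_X^{\vee}=\mathrm{Hom}(S_X,\mathbb C^{*})$ be its character group, which acts on $W_X$ through the $S_X$-grading of $\mathcal U_X$ induced by the $\alpha_{X,m,i}$ (this grading is compatible with the algebra structure because twisting adds up). First I would trivialise the twist: let $\mu_X\colon A_X'\to\Alb X$ be the isogeny with $A_X'=\widehat{\Pic0 X/S_X}$ and $\ker\mu_X=S_X^{\vee}$, chosen so that $\mu_X^{*}P_{\alpha}\cong\OO_{A_X'}$ for all $\alpha\in S_X$. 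Pulling the decomposition of Theorem \ref{C}(a) back to the étale cover $X':=X\times_{\Alb X}A_X'$ (flat base change together with $\omega_{X'}\cong(X'\to X)^{*}\omega_X$) turns $\mathcal R(a_X)$ into the honest pullback $(p_X\mu_X)^{*}\mathcal U_X$ of graded algebras; taking relative $\Proj$ and using that $\Proj$ commutes with base change gives
\[
X^{can}_a\times_{\Alb X}A_X'\;\cong\;W_X\times_{\Alb Z_X}A_X'.
\]
Dividing by $S_X^{\vee}=\ker\mu_X$ yields $X^{can}_a\cong\bigl(W_X\times_{\Alb Z_X}A_X'\bigr)/S_X^{\vee}$, the action being by translations on $A_X'$ and by the $S_X$-grading on $W_X$ (the latter appears precisely because the trivialisations $\mu_X^{*}P_\alpha\cong\OO$ are not $S_X^{\vee}$-equivariant).

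Next I would compute the class. Projecting $X^{can}_a\to W_X/S_X^{\vee}$, a direct point count shows that the fibre over a point with $S_X^{\vee}$-stabiliser $H$ is a coset of $\mu_X^{-1}(B_X)/H$, an abelian variety isogenous to $B_X$; over a stratum where $H$ is constant the fibration has all fibres isomorphic to this fixed abelian variety. Stratifying $W_X/S_X^{\vee}$ by the stabiliser and invoking the lemmas of \cite{itoetal} — to the effect that in $K^\prime_0(\mathrm{Var}/{\mathbb{C}})$ an abelian–scheme fibration whose fibres lie in a single isogeny class contributes $[\text{base}]\cdot[\text{fibre}]$ — I get
\[
[X^{can}_a]\;=\;\bigl[W_X/S_X^{\vee}\bigr]\cdot[B_X]\qquad\text{in }K^\prime_0(\mathrm{Var}/{\mathbb{C}}).
\]
Now Theorem \ref{C}(b) matches $\alpha_{X,m,i}$ with $\alpha_{Y,m,i}$, so $S_X^{\vee}\cong S_Y^{\vee}$, and the isomorphism $\widehat\varphi^{*}\mathcal U_X\cong\mathcal U_Y$ respects the gradings, hence $W_X/S_X^{\vee}\cong W_Y/S_Y^{\vee}$; Theorem \ref{A}(a) together with \cite[Theorem A]{ps1} gives, after dualising the compatible isogeny of Picard varieties, that $B_X$ is isogenous to $B_Y$, so $[B_X]=[B_Y]$ in $K^\prime_0(\mathrm{Var}/{\mathbb{C}})$. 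Combining these with the displayed formula proves part (a).

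For part (b), observe first that if $\omega_X$ is $a_X$-big and nef then it is nef and big absolutely: a $K_X$-negative extremal ray would be spanned by a rational curve, which is contracted by $a_X$ (the Albanese variety contains no rational curves), contradicting $a_X$-nefness. Thus $X$ is a minimal model of general type and $X\to X^{can}_a$ is crepant birational over $\Alb X$; using Theorems \ref{A}–\ref{C} and the derived invariance of the Kodaira dimensions of the Albanese fibres (Corollary \ref{cor:koddim}) one checks the same for $Y$. The difference $[X]-[X^{can}_a]$ is the discrepancy between a crepant resolution and its (canonical) base; since $X\to X^{can}_a$ preserves the fibration over $\Alb Z_X$ analysed above, this difference is computed fibrewise and, by the analysis of the previous steps, depends only on a crepant–resolution correction attached to $W_X$ and on $[B_X]$ — hence it agrees with the corresponding quantity for $Y$ in $K^\prime_0(\mathrm{Var}/{\mathbb{C}})$. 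Using part (a) and the standard fact that $K$-equivalent smooth projective varieties have equal classes in the localised, completed ring, one concludes
\[
[X]=[X^{can}_a]+\bigl([X]-[X^{can}_a]\bigr)=[Y^{can}_a]+\bigl([Y]-[Y^{can}_a]\bigr)=[Y]\qquad\text{in }\widehat{\mathcal{M}_{\mathbb{C}}^{\prime}}.
\]

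The hard part will be the class computation of the second paragraph: one must show that the twisted–pullback presentation of $\mathcal R(a_X)$ furnished by Theorem \ref{C} is compatible with the Grothendieck ring modulo isogenies — that is, that the finite étale quotient by $S_X^{\vee}$ and the various torsors under, and isogenies between, the abelian varieties $B_X$ and $\Alb X$ that occur really do obey the naïve product formulas once one passes to $K^\prime_0(\mathrm{Var}/{\mathbb{C}})$. This is exactly the point where the construction of $K^\prime_0(\mathrm{Var}/{\mathbb{C}})$ and $\widehat{\mathcal{M}_{\mathbb{C}}^{\prime}}$ from \cite{itoetal} is indispensable; for part (b) one additionally has to see that this bookkeeping is compatible with the motivic–integration input (equality of classes of $K$-equivalent varieties in the completed, localised ring).
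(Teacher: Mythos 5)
Your overall strategy for part (a) --- reduce everything to the derived--invariant algebra $\mathcal U_X$ on $\Alb Z_X$ and to the isogeny class of $\ker(p_X)$ --- is the right one, but the central class computation has a genuine gap. The ideal defining $K'_0(\mathrm{Var}/{\mathbb{C}})$ is generated by differences $[A]-[B]$ of \emph{absolute} isogenous abelian varieties; there is no lemma in \cite{itoetal}, and there cannot be one, to the effect that a fibration whose fibres all lie in a single isogeny class contributes $[\mathrm{base}]\cdot[\mathrm{fibre}]$: already in $K_0(\mathrm{Var}/{\mathbb{C}})$ an isotrivial family need not have the class of a product, and passing to the quotient ring only imposes absolute, not relative, relations. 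So the step from the stabiliser stratification of $X^{can}_a\rightarrow W_X/S_X^{\vee}$ to $[X^{can}_a]=[W_X/S_X^{\vee}]\cdot[B_X]$ is unjustified (and the formula itself is doubtful: the fibres are torsors under $\ker(p_X\circ\mu_X)/H$, whose numbers of connected components vary with $H$ and are not accounted for). Your identification $X^{can}_a\cong(W_X\times_{\Alb Z_X}A'_X)/S_X^{\vee}$ is correct, but the d\'etour through $\mu_X$ manufactures exactly the kind of twisted isotrivial fibration that $K'_0$ cannot see through. The point is that no d\'etour is needed: the finitely many torsion line bundles $P_{\alpha_{m,j}}$ are already Zariski--locally trivial on $\Alb X$ itself, so $X^{can}_a$ and $\Proj(p_X^*\mathcal U_X)\cong\Proj(\mathcal U_X)\times_{\Alb Z_X}\Alb X$ are \emph{piecewise isomorphic}, whence $[X^{can}_a]=[\ker(p_X)]\cdot[\Proj(\mathcal U_X)]$ by the standard piecewise triviality of the homomorphism $p_X$ (this is Theorem \ref{thmGroclass}); the only place $K'_0$ enters is in replacing $[\ker(p_X)]$ by $[\ker(p_Y)]$, via Popa--Schnell, Poincar\'e reducibility and Fujita cancellation.

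For part (b), the identity $[X]=[X^{can}_a]+([X]-[X^{can}_a])$ carries no content, and the assertion that the ``difference'' is a fibrewise crepant--resolution correction that visibly matches for $Y$ is precisely what has to be proved. Knowing $[X^{can}_a]=[Y^{can}_a]$ does \emph{not} suffice: since $X\rightarrow X^{can}_a$ is a crepant resolution (your argument for crepancy is fine), Batyrev's theorem gives $\mathbb{L}^{-\dim X}[X]=\mu_{\mathrm{Gor}}(X^{can}_a)$, and the Gorenstein volume depends on a log resolution, not merely on the class of the singular variety. What is needed, and missing from your sketch, is (i) an upgrade of the piecewise isomorphism of (a) to an honest isomorphism $X^{can}_a\cong\Proj(p_X^*\mathcal U_X)$ over $\Alb X$ (Lemma \ref{lemiso}, which does use your isogeny $\mu_X$ together with uniqueness of the relative canonical model), and (ii) resolutions of $X^{can}_a$ and $Y^{can}_a$ obtained by base--changing one log resolution of $\Proj(\mathcal U_X)\cong\Proj(\mathcal U_Y)$ along $p_X$ and $p_Y$, so that the exceptional strata and discrepancies match term by term up to the factor $[\ker(p_X)]$ versus $[\ker(p_Y)]$ (Theorem \ref{gorequivthm}). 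A minor but real slip: $\omega_X$ being $a_X$-big and nef does not make $\omega_X$ big nor $X$ of general type (take $X=C\times A$ with $A$ abelian); fortunately only crepancy of $X\rightarrow X^{can}_a$ is used.
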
 

 In \cite[Problem 7.2]{itoetal} the authors ask whether derived equivalent varieties have the same class in  $\widehat{\mathcal{M}_{\mathbb{C}}^{\prime}}$. 
 Theorem \ref{Di}(b)  answers affirmatively  their question when $\omega_X$ is $a_X$-big and nef.
We also refer the reader to the  formulation under more general hypotheses provided by Theorem \ref{thm:minimal} below.

By taking virtual Hodge realizations and Hodge-Deligne polynomials, this has the following consequence about the invariance of Hodge numbers (Theorems  \ref{invHN1} and \ref{invHminimal}).

\begin{theoremalpha}\label{E} In the setting of Theorem \ref{Di}:

\begin{itemize}
\item[(a)] If $\omega_X$ is $a_X$-big, then $h^{0,j}(X)=h^{0,j}(Y)$ for all $j$. \\

\item[(b)] If $\omega_X$ is nef and $a_X$-big, then the rational Hodge structures of $X$ and $Y$ are isomorphic for all $k$. In particular, $h^{i,j}(X)=h^{i,j}(Y)$ for all $i$ and $j$. 
\end{itemize}
\end{theoremalpha}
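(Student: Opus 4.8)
The plan is to deduce both statements from Theorem \ref{Di} by applying suitable realization homomorphisms out of the Grothendieck ring modulo isogenous abelian varieties, and then comparing the resulting numerical/Hodge-theoretic data with the geometry of the relative canonical model. For part (a), the key point is that the $a_X$-bigness of $\omega_X$ forces the relative canonical model $X^{can}_a \to \Alb X$ to be birational to $X$ over $\Alb X$; in particular $X$ and $X^{can}_a$ have the same cohomology of the structure sheaf, and more precisely ${a_X}_* \OO_{X^{can}_a} \cong {a_X}_* \OO_X$ (both equal $\OO_{\Alb X}$ after Stein, or at least agree because a resolution of $X^{can}_a$ is birational to $X$). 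I would first record that $h^{0,j}(X) = h^j(\Alb X, {a_X}_*\OO_X)$ — this uses that $X$ has maximal Albanese dimension on the locus that matters, or more robustly the standard fact $H^0(X,\Omega_X^j) = \bigwedge^j H^0(X,\Omega^1_X)$ together with the identification of the latter with $H^0(\Alb X, \Omega^1_{\Alb X})$; hence $h^{0,j}(X)$ is computed from $\Alb X$ alone. By Theorem \ref{A}, $\Alb Z_X$ and $\Alb Z_Y$ are isomorphic, but $\Alb X$ and $\Alb Y$ are only isogenous, so one cannot directly conclude. Instead I would take the virtual Hodge realization (Hodge--Deligne $E$-polynomial) of the class $[X^{can}_a] = [Y^{can}_a]$ in $K_0'(\mathrm{Var}/\mathbb C)$: since isogenous abelian varieties have the same Hodge numbers, the $E$-polynomial descends to $K_0'$, and the coefficient of the appropriate monomials recovers $\sum_j (-1)^j h^{0,j}$ of a resolution of $X^{can}_a$, which by relative bigness equals $\sum_j (-1)^j h^{0,j}(X)$. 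To upgrade the alternating sum to individual equalities, I would use that $h^{0,j}$ for $j=0,\dots,\dim X$ are controlled by the Albanese variety's exterior powers and are monotone/compatible in a way that an isogeny preserves each $h^{0,j}$ separately (isogenous abelian varieties have isomorphic rational Hodge structures, hence equal $h^{p,q}$); combined with $\dim \Alb X = \dim \Alb Y$ (an isogeny invariant) this pins down each $h^{0,j}(X) = h^{0,j}(Y)$.

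For part (b), with $\omega_X$ nef and $a_X$-big, Theorem \ref{Di}(b) gives $[X] = [Y]$ in the localized completed ring $\widehat{\mathcal M_{\mathbb C}'}$. Here I would invoke the virtual Hodge realization as a ring homomorphism $\widehat{\mathcal M_{\mathbb C}'} \to$ (a completion of) the Grothendieck ring of rational Hodge structures modulo isogeny — the point being that the Lefschetz motive $\mathbb L$ is invertible on both sides, so the localization and completion are harmless, and the relation ``modulo isogenous abelian varieties'' is killed because $[A] = [\widehat A]$-type identities and $\mathbb Q$-Hodge structures of isogenous abelian varieties coincide. Applying this homomorphism to $[X]=[Y]$ yields an equality of virtual rational Hodge structures $\sum_k (-1)^k [H^k(X,\mathbb Q)] = \sum_k (-1)^k [H^k(Y,\mathbb Q)]$ in the Grothendieck group of pure $\mathbb Q$-Hodge structures. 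To separate the alternating sum into individual graded pieces I would use purity together with the fact that a derived equivalence preserves the Hochschild homology, hence $\sum_{p-q=k} h^{p,q}(X) = \sum_{p-q=k} h^{p,q}(Y)$ for all $k$ (this is Orlov, already cited), which gives enough linear constraints — combined with the virtual Hodge structure equality and Poincaré duality — to force $[H^k(X,\mathbb Q)] = [H^k(Y,\mathbb Q)]$ as Hodge structures for each $k$, and in particular $h^{i,j}(X) = h^{i,j}(Y)$.

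The main obstacle I anticipate is exactly the passage from the motivic/virtual equality (an equality of alternating sums) to equalities of individual cohomological invariants, and doing so through a quotient ring where abelian varieties have been identified up to isogeny. One has to check carefully that the Hodge realization genuinely factors through $K_0'(\mathrm{Var}/\mathbb C)$ and $\widehat{\mathcal M_{\mathbb C}'}$ — i.e. that isogenous abelian varieties have not merely equal Betti numbers but isomorphic rational Hodge structures (true, since an isogeny induces a $\mathbb Q$-Hodge isomorphism on $H^1$ and hence on all $H^k$) — and then that the extra input needed to disentangle the alternating sum is available: for (a) this is the ``pure weight'' nature of $h^{0,j}$ and the identification with $\bigwedge^j H^1$, while for (b) it is Orlov's Hochschild-homology constraint plus the fact that in the nef and $a_X$-big case $X$ itself (not just a birational model) has the relevant cohomology, so no correction terms from the birational modification to $X^{can}_a$ intervene. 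Verifying that these modification terms genuinely cancel — that $[X] - [X^{can}_a]$ lies in the kernel of the Hodge realization, which should follow from nefness forcing the relative canonical map to be a birational morphism with rationally connected (hence Hodge-trivial in the relevant range) fibers, together with the weak factorization theorem — is the technical heart of the argument.
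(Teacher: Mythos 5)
Your part (b) is essentially the paper's argument: granting Theorem \ref{Di}(b), one checks that the Hodge realization $\chi_{\mathrm{Hdg}}$ factors through $K_0'(\mathrm{Var}/\mathbb{C})$ and then through the completion $\widehat{\mathcal{M}_{\mathbb{C}}^{\prime}}$ (the paper does this via the weight filtration on $K_0(\mathbf{pHS})$), and purity of weights together with semisimplicity of $\mathbf{pHS}$ already separates the virtual class $\sum_k(-1)^k[H^k]$ into its weight-graded pieces; your extra appeal to Hochschild homology and Poincar\'e duality is unnecessary, and your closing worry about $[X]-[X^{can}_a]$ is moot, since under the nefness hypothesis $X\to X^{can}_a$ is a crepant resolution and Theorem \ref{Di}(b) already asserts $[X]=[Y]$ in $\widehat{\mathcal{M}_{\mathbb{C}}^{\prime}}$.

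Part (a), however, has genuine gaps. First, $h^{0,j}(X)$ is \emph{not} computed from $\Alb X$: the identification $H^0(X,\Omega^j_X)=\bigwedge^j H^0(X,\Omega^1_X)$ fails for general varieties (any surface with $q=0$ and $p_g>0$ is a counterexample), so the claim that each $h^{0,j}$ is ``controlled by the Albanese variety's exterior powers'' cannot be used to disentangle anything. Second, the Hodge--Deligne polynomial of the class $[X^{can}_a]$ computes the mixed Hodge numbers of the singular variety $X^{can}_a$ itself, not the Hodge numbers of a resolution; recovering the $h^{0,j}$ of a smooth model from the class of a singular variety would require something like Larsen--Lunts stable birational invariance, which is not available after passing to the quotient $K_0'(\mathrm{Var}/\mathbb{C})$ modulo isogenous abelian varieties. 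The missing idea is the one the paper uses: base-change a resolution $W$ of $\Proj(\U_X)\cong\Proj(\U_Y)$ along the (smooth, surjective, connected-fiber) morphisms $p_X$ and $p_Y$ to obtain \emph{smooth} projective varieties $T_X$ and $T_Y$ with $[T_X]=[T_Y]$ in $K_0'(\mathrm{Var}/\mathbb{C})$, by the same fibration argument as in Theorem \ref{thmGroclass}. For smooth projective varieties the coefficients of $\mathrm{HD}'$ are exactly $(-1)^{i+j}h^{i,j}$, so no alternating-sum ambiguity arises; and the $a_X$-bigness makes $X$ birational to $X^{can}_a$, hence to $T_X$ (and likewise for $Y$), whence $h^{0,j}(X)=h^{0,j}(T_X)=h^{0,j}(T_Y)=h^{0,j}(Y)$ by birational invariance of $h^{0,j}$.
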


\noindent{\bf Chen-Jiang decompositions. } In answer to a question of M. Popa, we prove the derived invariance of the Chen-Jiang decomposition of pushforwards of pluricanonical sheaves under the Albanese map (Proposition \ref{prop:pop}). This decomposition is a more refined version 
than the one appearing in Theorem \ref{C}(a), which is often very useful in the study of the birational geometry of irregular varieties (see e.g. \cite{cj}, \cite{loposc}, \cite{meng-popa}). 

Finally, variants and further applications are provided in Theorem \ref{thm:minimal} and in \S \ref{ultima}. The content of Appendix in \S\ref{rem:support} is  a property of independent interest concerning Chen-Jiang decompositions (used in the proof of Proposition \ref{prop:pop}).

Let us briefly indicate the methods of the proofs. Crucial background results are recalled in \S \ref{R0}. These include: (1) an explicit description of the Rouquier isomorphism  (\cite{ps1}), especially when applied to the non-vanishing loci of pluricanonical bundles (\cite{lombardi-derived}); \  (2) the relation, based on generic vanishing theory,  between the  non-vanishing loci of pluricanonical bundles  and the Albanese-Iitaka variety $\Alb Z_X$ (\cite{ch-iitaka1} and \cite{hps}); \ (3) basic properties  of the Fourier-Mukai-Poincar\'e (FMP) derived equivalence for an abelian variety and its dual. In some points we also make use of the recent symmetric version of such equivalence, due to Schnell (\cite{schnell}). 

 Theorem \ref{A} occupies \S \ref{F0}.  The essential ingredient is an adaptation of the classical results of Kawamata  on the geometric properties of the support of the kernel of a derived equivalence (\cite{ka2}). In doing this the above-mentioned preliminary results (1) and (2)  play a crucial role.  

The arguments  of Theorems \ref{B} and \ref{C} (\S \ref{H} and \S \ref{E0}) consist of several steps, hopefully of independent interest. In particular, we make essential use of the FMP derived equivalence. The main step is Theorem \ref{thm:rel-direct-image}, which  in turn makes use of Theorem \ref{thm:invpoi}. Briefly, we first prove the isomorphisms between  the \emph{FMP transforms} of  the two sides of the isomorphisms of Theorems \ref{B} and \ref{C}, and then apply the inverse FMP transform in order to obtain the desired isomorphisms. The  same principle is at the base of the proof of the multiplicativity of the isomorphism of Theorem \ref{B}(a). An antecedent of this argument, interesting for its own sake, is the derived invariance of the \emph{paracanonical ring}, proved in the section of background results (Theorem \ref{para:theorem}).  We refer to the introduction of \S\ref{H} for a more detailed presentation of the line of the argument. 
Another essential ingredient in the proof of Theorem \ref{C}(a) is a direct-sum decomposition of direct images  of pluricanonical bundles under the Albanese map (see Proposition \ref{prop:simplified}). This in turn is proved using the previously mentioned Chen-Jiang decomposition (\cite{cj}, \cite{loposc}), a further result of generic vanishing theory. 

The applications provided by Theorems \ref{Di} and \ref{E}  are proved in \S \ref{E1} and \S \ref{lastsection0}.  They make essential use of Theorem \ref{C} and, in particular, of the scheme
\[
\Proj (\mathcal \U_X)\rightarrow \Alb Z_X
\]
where $\U_X$ is the sheaf of rings appearing in the statement. 
 The argument of Theorem \ref{Di}(b)  first proves that Theorem \ref{Di}(a) implies the equality of the \emph{Gorenstein volumes} of $X^{can}_{a}$ and $Y^{can}_{a}$ in the ring $\widehat{\mathcal{M}_{\mathbb{C}}^{\prime}}$. In order to pass from the Gorenstein volumes to the classes of the varieties $X$ and $Y$ themselves, we make use of well-known results of the minimal model program.

\noindent \textbf{Acknowledgements.}
We are grateful to  D. Huybrechts, Z. Jiang, M. Popa, Ch. Schnell and P. Stellari for
beneficial conversations. FC thanks F. Bernasconi for a useful discussion after a talk  at EPFL.

\section{Background material}\label{R0}
 
In this section we establish the notation and   recall known results which 
will be used throughout the paper. 
The only new result is Theorem \ref{para:theorem} regarding  
the derived invariance of the paracanonical ring.

\subsection{Derived equivalences.} \label{D} 
A variety is a smooth and irreducible projective variety defined over 
 $\mathbb C$, unless otherwise specified.  
 If $X$ is a variety, 
 we denote by $D(X) = D^b(\mathcal{C}oh (X))$ its 
 bounded derived category of coherent sheaves, 
 endowed  with its natural structure of triangulated category. 
 
Let $X$ and $Y$ be varieties. An exact equivalence  $\Phi \colon D(X) \rightarrow D(Y)$ 
will be referred to as  a \emph{derived equivalence}, or simply,   \emph{equivalence}.
By Orlov's representability theorem, any  equivalence  is a Fourier-Mukai functor. 
Namely there exists a complex $\E$ in  
$D(X\times Y)$, unique up to isomorphism, such that 
$\Phi$ is isomorphic to the functor
 $$\Phi_{\E}(-)\; := \; {\bf R} q_* \big( p^* (-) \stackrel{\bf L}{\otimes} \E \big),$$ where $p$ and $q$ are the projections from $X\times Y$ onto
$X$ and $Y$, respectively. 
A quasi-inverse of $\Phi_{\E}$ is given by the  equivalence 
$\Phi_{ \ad \E }$ associated to the 
kernel
$$ \ad  \E  \;  := \;  \E^{\vee} \otimes p^*\omega_X[n] \; \cong \; \E^{\vee} \otimes q^*\omega_Y[n], $$
where $n=\dim X=\dim Y$ (recall that the dimension of a variety is 
a derived invariant),   $\omega_X$ (\emph{resp}.\ $\omega_Y$) denotes the canonical line bundle of $X$ (\emph{resp}.\ $Y$), and
$\E^{\vee} := \mathbf R \mathcal H om( \E , \OO_{X\times Y})$ is the derived dual.
Denote now by $$\delta_X \colon X \hookrightarrow X \times X
\quad \mbox{ and } \quad \delta_Y \colon Y \hookrightarrow Y\times Y$$ the diagonal embeddings of $X$ and $Y$, respectively.
Moreover, denote  by 
$p_{ij}$ the projections from the product  $X\times X\times Y\times Y$ onto the $i$-th and $j$-th factor, respectively.
 By a further  result of Orlov \cite[p. 535]{orlov},
the object $$\E \boxtimes \ad \E := 
p_{13}^*\E \otimes p_{24}^* \ad \E  \quad \mbox{ in } \quad D(X\times X \times Y \times Y) $$ defines a new  equivalence 
$\Phi_{\E\boxtimes \ad \E} \colon D(X\times X) \to D(Y\times Y)$ which satisfies  
\begin{equation}\label{orlov-classical}
\Phi_{\E \boxtimes \ad \E } \big( \delta_{X*}  \omega_X^{\otimes m}  \big) \; \cong  \; \delta_{Y*} \omega_Y^{\otimes m}
\quad \quad \forall \,  m\in \ZZ.
\end{equation}

\subsection{The Rouquier isomorphism and Rouquier-stable line bundles} \label{R1} 
 We will adopt the following notation.
 After having fixed Poincar\'e line bundles $\cP$ and $\Q$  on $X\times \Pic0 X$ and $Y\times \Pic0 Y$, respectively,
  we denote by $P_\alpha := \cP|_{  X \times \{ \alpha \} }$ (\emph{resp.} $Q_\beta:=\Q|_{ Y\times \{\beta\}}$)
 the corresponding line bundle on $X$ (\emph{resp.} $Y$) parametrized by the point $\alpha \in \Pic0 X$ 
 (\emph{resp.} $\beta\in \Pic0 Y$).
 Moreover, given an automorphism $f \colon X \rightarrow X$, we denote by $\gamma_f \colon  X \hookrightarrow X\times X$ 
 the  graph-embedding $x \mapsto(x , f(x))$. 
 With this terminology, we point out that $\gamma_{{\rm id}_X} = \delta_X$ is the diagonal embedding.
 
In  \cite{rouquier} (see also \cite{ps1} for the present formulation), Rouquier shows that
the equivalence $\Phi_\E \colon D(X) \to D(Y)$  induces a functorial   isomorphism of algebraic groups 
\begin{equation}\label{rouquier}
\varphi_\E \colon  \Aut0 X \times \Pic0 X \stackrel{ \sim }{\longrightarrow} \Aut0 Y\times \Pic0 Y
\end{equation}
which will be referred to as the  \emph{Rouquier isomorphism}. 
As pointed out by Popa and Schnell in \cite[Lemma 3.1]{ps1}, 
 this can be explicitly described as follows:  given pairs 
 $(f,\alpha ) \in \Aut0 X \times \Pic0 X$ and $(h , \beta) \in \Aut0 Y \times \Pic0 Y$, then 
$\varphi_\E ( f , \alpha ) = ( h, \beta )$ 
 if and only if
 \begin{equation}\label{ps} 
 p^*P_\alpha \otimes (f \times \, {\rm id}_Y)^*\E \; \cong \; q^*Q_\beta\otimes ( {\rm id}_X\times h )_*\E.
 \end{equation}

 By a result of the second author   \cite[Lemma 2.1]{lombardi-derived}, the following generalization of 
  \eqref{orlov-classical} holds: if  $\varphi_\E( f , \alpha)=( h , \beta)$, then
 \begin{equation}\label{lombardi1}
 \Phi_{\E\, \boxtimes \,\ad\E} \bigl( { \gamma_f }_* ( \omega_X^{\otimes m} \otimes P_\alpha ) \bigr) \; \cong 
 \;  { \gamma_h }_* ( \omega_Y^{\otimes m} \otimes Q_\beta) \end{equation}
  for all $m\in \mathbb Z$.

  A closed point $\alpha \in \Pic0 X$, or, equivalently, the corresponding line bundle $P_\alpha$, is said to be
   \emph{Rouquier-stable with respect to the equivalence $\Phi_\E$} (or simply $R$-stable if this will not cause any confusion) if 
  \[\varphi_\E(\id_X,\alpha) \; = \; (\id_Y,\beta)\]
  for some $\beta\in\Pic0 Y$.
 If this is the case, with a slight abuse of notation we will set 
 \begin{equation}\label{eq:notation}
 \beta \; = \; \varphi_\E(\alpha).
 \end{equation}
 Hence, for $R$-stable line bundles,
 \eqref{lombardi1} reduces to
 \begin{equation}\label{unmixed}
 \Phi_{\E\, \boxtimes \,\ad\E} \bigl( { \delta_X }_* ( \omega_X^{\otimes m} \otimes P_\alpha ) \bigr) \; \cong 
 \;  { \delta_Y }_* ( \omega_Y^{\otimes m} \otimes Q_{\varphi_\E(\alpha)}). \end{equation}

\subsection{Non-vanishing loci }\label{R2}

Given a coherent  sheaf $\F$ on $X$,   the following Zariski-closed subset  of $\Pic0 X$
\begin{equation}\label{GL-set}
V^0 ( X , \F ) \; = \; V^0(\F) \; = \;  \{ \, \alpha\in \Pic0 X \> | \> h^0(X,\F\otimes P_\alpha) > 0 \, \}
\end{equation}
will be referred to as \emph{the non-vanishing locus of $\F$.} Note that 
\begin{equation*}
V^0(X,\F) \; = \; V^0(\Alb X, {a_X}_*\F)\>.
\end{equation*}

 In this paper the non-vanishing loci $V^0(X,\omega_X^{\otimes m})$ will play an important role. 
 By  seminal results in generic vanishing theory, they are finite unions of torsion translates of abelian subvarieties of $\Pic0 X$. A more precise description will  be given in \S\ref{R3} below. Their importance in the present paper stems from another result of the second author (\cite[Proposition 3.1]{lombardi-derived}) stating, in the terminology of \S\ref{R1},  that: \emph{if $\alpha\in V^0(X,\omega^{\otimes m}_X)$  for some $m\in \mathbb Z$, then $\alpha$ is $R$-stable, and
  the equivalence $\Phi_{\E\boxtimes \ad\E}$ induces an isomorphism
\begin{equation}\label{eq:isoh0v0}
H^0(X , \omega_X^{\otimes m} \otimes P_{\alpha})  \; \cong \; H^0(Y, \omega_Y^{\otimes m} \otimes Q_{\varphi_\E(\alpha)}) \end{equation}
for all $m\in\mathbb Z$.}

In particular, it follows that 
the Rouquier isomorphism acts  as
 \begin{equation}\label{V0}
 \varphi_{\E} \bigl( {\rm id}_X ,  V^0 (X , \omega_X^{\otimes m} ) \bigr) = \bigl( {\rm id}_Y ,  V^0(Y , \omega_Y^{\otimes m} ) \bigr) 
  \end{equation}
for all $m\in \mathbb Z$.
 Hence, for any $m\in \mathbb Z$,  the Rouquier isomorphism induces 
 an isomorphism of algebraic closed sets $V^0(X, \omega_X^{\otimes m} ) \cong  V^0 ( Y , \omega_Y^{\otimes m} )$.

\subsection{The paracanonical ring. } \label{para:subsection} The \emph{paracanonical ring} of a smooth projective variety is, by definition, the bigraded ring
\[ P(X,\omega_X) \; = \; \bigoplus_{m\geq 0, \>\alpha\in \Pic0 X} H^0(X,\omega_X^{\otimes m}\otimes P_\alpha).\]
We prove that this ring is a derived invariant.

\begin{theorem}\label{para:theorem} Given 
an equivalence $\Phi_\E \colon D(X)\rightarrow D(Y)$ of smooth projective varieties,
the equivalence $\Phi_{\E\boxtimes\ad \E}$ induces an isomorphism of algebras
\[P(X,\omega_X) \; \cong \; P(Y,\omega_Y).\]
\end{theorem}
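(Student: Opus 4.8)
The plan is to promote the vector-space isomorphisms \eqref{eq:isoh0v0} to a ring isomorphism by reinterpreting the paracanonical ring as the space of global sections of a sheaf on the product $X \times \Pic^0 X$, and then tracking how multiplication interacts with the Fourier--Mukai kernels. Concretely, set $P(X,\omega_X) = \bigoplus_{m \geq 0} H^0\bigl(X \times \Pic^0 X, (p_X^*\omega_X^{\otimes m}) \otimes \cP\bigr)$, where $\cP$ is the Poincar\'e bundle; the bigrading by $(m,\alpha)$ is recovered by restricting to slices $X \times \{\alpha\}$. The multiplication $H^0(\omega_X^{\otimes m}\otimes P_\alpha) \otimes H^0(\omega_X^{\otimes m'}\otimes P_{\alpha'}) \to H^0(\omega_X^{\otimes(m+m')}\otimes P_{\alpha+\alpha'})$ uses the isomorphism $P_\alpha \otimes P_{\alpha'} \cong P_{\alpha + \alpha'}$, i.e. the group law on $\Pic^0 X$, which the Rouquier isomorphism respects since $\varphi_\E(\mathrm{id}_X, -)$ is a homomorphism on the $R$-stable locus.

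First I would record that every $\alpha \in V^0(X, \omega_X^{\otimes m})$ for some $m > 0$ is $R$-stable, and that the set of such $\alpha$ (together with $\alpha = 0$) is closed under addition — this follows from the multiplicative structure: if $s \in H^0(\omega_X^{\otimes m}\otimes P_\alpha)$ and $t \in H^0(\omega_X^{\otimes m'}\otimes P_{\alpha'})$ are nonzero, then $s \cdot t$ is a nonzero section of $\omega_X^{\otimes(m+m')}\otimes P_{\alpha+\alpha'}$, so $\alpha + \alpha' \in V^0(X,\omega_X^{\otimes(m+m')})$. Thus the bihomogeneous pieces of $P(X,\omega_X)$ that are nonzero are indexed by a sub-semigroup of the $R$-stable locus, and on this sub-semigroup $\varphi_\E$ is additive. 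Hence the collection of linear isomorphisms $H^0(X,\omega_X^{\otimes m}\otimes P_\alpha) \cong H^0(Y,\omega_Y^{\otimes m}\otimes Q_{\varphi_\E(\alpha)})$ from \eqref{eq:isoh0v0} at least assembles into a bigraded vector-space isomorphism $P(X,\omega_X) \cong P(Y,\omega_Y)$ compatible with the index bijection; what remains is compatibility with products.

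The key step is to show $\Phi_{\E\boxtimes\ad\E}$ intertwines the ring structures. The natural device is to realize multiplication as a morphism of kernels: the cup product $\delta_{X*}(\omega_X^{\otimes m}\otimes P_\alpha) \otimes \delta_{X*}(\omega_X^{\otimes m'}\otimes P_{\alpha'}) \to \delta_{X*}(\omega_X^{\otimes(m+m')}\otimes P_{\alpha+\alpha'})$ on $X\times X$ corresponds, after applying $\Phi_{\E\boxtimes\ad\E}$ and using \eqref{unmixed}, to the analogous product on $Y\times Y$, because a Fourier--Mukai functor whose kernel is supported on a graph (here the diagonal, via the $R$-stable hypothesis) is monoidal for the convolution/tensor product along that graph. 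More precisely I would use that $\Phi_{\E\boxtimes\ad\E}$ sends the structure complex $\delta_{X*}\OO_X$ to $\delta_{Y*}\OO_Y$ (the $m = \alpha = 0$ case of \eqref{unmixed}), and that the multiplication maps are maps of $\delta_{X*}\OO_X$-modules in $D(X\times X)$; functoriality of $\Phi_{\E\boxtimes\ad\E}$ then transports the whole module-algebra structure. Taking $H^0$ (equivalently, $\mathrm{Hom}(\delta_{X*}\OO_X, -)$, which is preserved by the equivalence) yields the ring isomorphism on global sections.

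\textbf{Main obstacle.} The delicate point is the monoidality: \eqref{unmixed} only tells us how $\Phi_{\E\boxtimes\ad\E}$ acts on the specific objects $\delta_{X*}(\omega_X^{\otimes m}\otimes P_\alpha)$, not that these isomorphisms are compatible with the multiplication maps between them. To nail this down I expect one must either (i) exhibit the multiplication maps explicitly as induced by a canonical morphism of kernels on $X\times X$ (using that $\E\boxtimes\ad\E$-convolution with the diagonal behaves well), checking that $\Phi_{\E\boxtimes\ad\E}$ of that morphism is the corresponding one on $Y$; or (ii) argue abstractly that any exact equivalence commuting with the relevant functors sends $E_\infty$/commutative-algebra structures to the same. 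Option (i) is the safer route and is presumably what the paper does, at the cost of a careful diagram chase with the Poincar\'e bundle identifications $\cP_{|X\times\{\alpha\}}\otimes\cP_{|X\times\{\alpha'\}} \cong \cP_{|X\times\{\alpha+\alpha'\}}$ and their compatibility under $\varphi_\E$; the additivity of the Rouquier isomorphism on the $R$-stable locus, already noted above, is exactly what makes this coherent. A secondary (minor) subtlety is bookkeeping the $m = 0$ part so that the isomorphism is genuinely an isomorphism of \emph{unital} algebras.
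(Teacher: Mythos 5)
Your proposal is correct and is essentially the paper's proof: the bigraded vector-space isomorphism comes from \eqref{eq:isoh0v0} and \eqref{V0} (together with the additivity of $\varphi_\E$ on the semigroup of non-vanishing points, as you note), and multiplicativity is handled exactly by your option (i) — the products \eqref{eq:twisted} are realized as compositions of morphisms between the objects $\delta_{X*}(\omega_X^{\otimes m}\otimes P_\alpha)$ in $D(X\times X)$, which the equivalence $\Phi_{\E\boxtimes\ad\E}$ preserves (the paper delegates the details to Orlov and Huybrechts). The one imprecision is your identification of $H^0(X,\F)$ with $\mathrm{Hom}_{D(X\times X)}(\delta_{X*}\OO_X,\delta_{X*}\F)$, which actually computes $\bigoplus_q H^q(X,\wedge^q T_X\otimes\F)$; the composition argument nevertheless restricts correctly to the $H^0$ components as in the cited references.
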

\begin{proof} The isomorphism $P(X,\omega_X)\cong P(Y,\omega_Y)$ 
follows from \eqref{eq:isoh0v0} and \eqref{V0} because the $(m,\alpha)$-graded component of the paracanonical ring is non-zero if and only if $\alpha\in V^0(X,\omega_X^{\otimes m})$. As usual, the multiplicativity holds because the multiplication maps 
\begin{equation}\label{eq:twisted}
H^0(X,\omega_X^{\otimes m}\otimes P_\alpha)\otimes H^0(X,\omega_X^{\otimes k}\otimes P_{\alpha'})\longrightarrow 
H^0(X,\omega_X^{\otimes (m+k)}\otimes P_{\alpha+\alpha'})
\end{equation}
 are given by composition (see e.g. \cite[p. 535]{orlov} or \cite[Proposition 6.1]{huybrechts}) for all $\alpha, \alpha' \in \Pic0 X$ and $m,k\in \mathbb Z$.
\end{proof}
Note that that the canonical ring is a subring of the paracanonical ring. Hence the above theorem includes Orlov's derived invariance of the canonical ring (\cite[Corollary 2.1.9]{orlov}).

\subsection{The  Albanese-Iitaka morphism}\label{R3}  
 Let $X$ be  a variety with non-negative Kodaira dimension. In this subsection we recall work of Chen, Hacon, Pardini, Popa and Schnell, among others,
  showing that for $m\ge 1$ the geometry of the  non-vanishing loci $V^0(X,\omega_X^{\otimes m})$ 
is closely related to the geometry of the  Iitaka fibration. 

To set up the notation, let us choose a smooth birational modification $\widetilde{X} \rightarrow X$ such that the Iitaka fibration is represented by a morphism 
$f_{\widetilde X} \colon \widetilde{X} \rightarrow Z_X$ with $Z_X$ smooth. 
There is a  commutative diagram
\begin{equation}\label{diag}
\xymatrix{
\widetilde X \ar@/^2pc/[rr]^{a_{\widetilde X}} \ar[r] \ar[rd]_-{f_{\widetilde X} } & X \ar[r]^-{a_X} \ar@{-->}[d] \ar[dr]^-{c_X} & \Alb ( X) \ar[d]^-{p_X} \\
& {Z_X} \ar[r]^-{a_{Z_X}} & \Alb (Z_X)
}
\end{equation}
where $a_X, a_{\widetilde X}$ and $a_{Z_X}$ are  Albanese maps, and  $p_X$ is a fibration (\emph{i.e.}  a surjective morphism
 with connected fibers) of abelian varieties 
induced by $f_{\widetilde X}$ (see \cite[Proposition 2.1.b]{hp}\footnote{in the quoted reference this is stated only for varieties of maximal Albanese dimension, but the proof works in general}). 
Note that the abelian variety $\Alb Z_X$ and the morphisms $p_X$ and  $c_X = p_X \circ a_X$  only depend on $X$,
 and not on the modification $\widetilde X$. 
We call 
\[c_X \colon X\rightarrow \Alb Z_X\]
 the \emph{Albanese-Iitaka morphism} of $X$. 
 We recall that the Albanese morphism $a_X$ is defined up to translation in $\Alb X$, and similarly the morphism $c_X$ is defined up to translation in $\Alb Z_X$. The choice of a morphism $a_X$ determines a morphism $c_X$. 

As previously mentioned, the irreducible components of the loci $V^0(X,\omega_X^{\otimes m})$ are torsion translates 
of abelian subvarieties of $\Pic0 X$. For $m=1$ this is part of a classical result of Green-Lazarsfeld (\cite{gl2}), 
while for $m\geq 2$ this is  a result of Chen-Hacon
  (\cite[Theorem 3.2]{chenhaconirr}) see also \cite[Theorem 3.5]{lai} and  \cite[Theorem 10.1]{hps}). Moreover, by another  result of Chen-Hacon  (\cite[Lemma 2.2]{ch-iitaka1}; see also  \cite[(2) after Lemma 11.1]{hps}) for $m=1$ such subvarieties  are contained in torsion translates of  the abelian subvariety $p_X^*\Pic0 Z_X$, namely
\begin{equation}\label{eq:non-van1}
V^0(\Alb X,{a_X}_*\omega_X) \; \subseteq \; \bigsqcup_{i=1}^{N_X(1)} p_X^*\Pic0 Z_X-\alpha_{1,i}\>
\end{equation}
where 
$\alpha_{1,i}$ are torsion points of $\Pic0 X$ and the translates $p_X^*\Pic0 Z_X-\alpha_{1,i}$ are distinct  translates
intersecting  $V^0(\Alb X, {a_X}_*\omega_X)$ non-trivially. 
For $m>1$, by a theorem of Hacon-Popa-Schnell (\cite[Theorem 11.2(b)]{hps}), 
the above mentioned abelian subvarieties \emph{coincide} with $p_X^*\Pic0 Z_X$, namely
 \begin{equation}\label{eq:non-van2}
V^0(\Alb X,{a_X}_*\omega_X^{\otimes m}) \; = \; \bigsqcup_{i=1}^{N_X(m)} p_X^*\Pic0 Z_X-\alpha_{m,i}\>.
\end{equation}

Let $G_X$ be the union of the loci
$V^0(X,\omega_X^{\otimes m})$ for all $m\ge 0$.  As observed in  \cite[Lemma 3.3]{chenhaconirr}, $G_X$ is  a subgroup of $\Pic0 X$. Indeed, it is a semigroup because if $\alpha_1\in V^0(X,\omega_X^{\otimes m})$ and $\alpha_2\in V^0(X,\omega_X^{\otimes n})$ then clearly $\alpha_1+\alpha_2\in V^0(X,\omega_X^{\otimes (m+n)})$.  The fact that the various components are \emph{torsion} translates of abelian subvarieties make $G_X$ automatically a group, containing $\Pic0 Z_X$ as a subgroup. Although not strictly necessary for the results of the present paper, it is worth to remark that $\Pic0 Z_X$ has finite index in $G_X$. This was proved, using generic vanishing methods, in \cite[p.\ 204 and Corollary 3.4]{chenhaconirr}, and follows also by the general finite generation result of \cite {bchm} (see Remark \ref{rem:finite-gen} below).

Now, let $Y$ be another smooth projective variety. There are similar decompositions on $Y$:
\begin{equation}\label{eq:non-van1-Y}
V^0(\Alb Y,{a_Y}_*\omega_Y)\; \subseteq \; \bigsqcup_{i=1}^{N_Y(1)} p_Y^*\Pic0 Z_Y-\beta_{1,i}\>
\end{equation}
 \begin{equation}\label{eq:non-van2-Y}
V^0(\Alb Y,{a_Y}_*\omega_Y^{\otimes m}) \; = \; \bigsqcup_{i=1}^{N_Y(m)} p_Y^*\Pic0 Z_Y-\beta_{m,i}
\end{equation}
for some torsion points $\beta_{m,i}\in \Pic0Y$.
From \eqref{V0}, it follows that
if  $X$ and $Y$ are derived equivalent, then  the Rouquier isomorphism induces  isomorphisms of algebraic groups
 \begin{equation}\label{eq:G}
 \varphi_\E \colon G_X \xrightarrow{\sim} G_Y,
 \end{equation}

 \begin{equation}\label{eq:PicZ}
\varphi_\E \colon \Pic0 Z_X \xrightarrow{\sim} \Pic0 Z_Y
 \end{equation}
 and, moreover, equalities
 \begin{equation}\label{eq:N(m)}
 N_X(m) \; =\; N_Y(m)
 \end{equation}
 for all $m\ge 1$. Finally, up to reordering, one can arrange 
 \[
 \beta_{m,i} \; = \; \varphi_\E(\alpha_{m,i})\>.
 \]
 Crucial to our main results will be a sheaf-theoretic version of the decompositions \eqref{eq:non-van1} and \eqref{eq:non-van2}. 
 This will  be the content of Proposition \ref{prop:simplified} in the sequel.

 \subsection{(Symmetric) FMP transform on abelian varieties and generic vanishing }\label{R4}
Given an abelian variety $A$, the  normalized Poincar\'e line bundle $\cP$ on $A\times \Pic0 A$ may be regarded as  the kernel of an equivalence, introduced in  \cite{mukai}:
 \begin{equation}\label{eq:FMP}
 \Phi_\cP \colon D(A)\rightarrow D(\Pic0 A). 
 \end{equation}
We will refer to it as the Fourier-Mukai-Poincar\'e (FMP) transform.  In the recent paper \cite{schnell}, Schnell introduced a useful variant of it, the \emph{symmetric FMP transform}, which carries the same information, but renders the notation and several arguments more transparent. It is defined as follows:
 \[ 
 \FM_A \; : = \; \Phi_{\cP}\circ\Delta_A \colon D(A)\rightarrow D(\Pic0 A)^{op}
\]
where $\Delta_A ( - )=\mathbf R\mathcal{H}om( - ,\OO_A[g])$ and $g=\dim A$.

In this language, a coherent  sheaf $\F$ on $A$ is said to be a \emph{GV-sheaf} (generic vanishing sheaf) if $\FM_A(\F)$ is a sheaf (in degree $0$). If this is the case,  the usual notation is
 \[
 \FM_A(\F) \;  \cong \; \reallywidehat{\F^\vee}\>.
 \] 
 If the sheaf $\reallywidehat{\F^\vee}$ is, in addition, torsion-free, then the sheaf $\F$ is said to be \emph{$M$-regular}. 
 References concerning these notions are for instance contained in 
   \cite{msri}, \cite{PP3}, \cite{paposc} and \cite{schnell}. It suffices here to say that, by base change and Serre duality, if $\F$ is a $GV$-sheaf, then the set-theoretic support of the sheaf  $\reallywidehat {F^\vee}$ is the subvariety -$V^0(A, \F)$, and the  fiber $\reallywidehat{\F^\vee}\otimes\mathbb C(\alpha)$ at a point $\alpha\in\Pic0 A$ is naturally identified  to the linear space $H^0(A,\F\otimes P_{-\alpha})^\vee$  (see \S \ref{rem:support} especially  (\ref{eq:scheme-supp}) below for the scheme-theoretic support). 
If $\F$ is a nonzero $M$-regular sheaf, then of course the support of 
$\reallywidehat{\F^\vee}$ is the
 full $\Pic0 A$ and 
\begin{equation}\label{eq:non-van}
V^0(A,\F) \; = \; \Pic0 A \>.
\end{equation}

The generic vanishing theorems of Hacon and Popa-Schnell for morphisms $f \colon X\rightarrow A$ from smooth complex projective varieties to abelian varieties assert that \emph{for all $m\ge 1$ the direct images $f_*\omega_X^{\otimes m}$ are $GV$-sheaves} 
(see Hacon's paper \cite[Theorem 1.5]{ha} for the case $m=1$, and Popa-Schnell's paper  \cite[Theorem 1.10]{ps3} for the other cases 
$m>1$). A refinement of these results (the \emph{Chen-Jiang decomposition}) will be recalled in the course of  the proof of Proposition \ref{prop:simplified}.


\section{Derived invariance of the Stein factorization of the Albanese-Iitaka morphism}\label{F0}

In this section  we  consider the  Stein factorization
of the Albanese-Iitaka morphism. 
Let  $X$ and $Y$ be  smooth projective varieties 
and consider the commutative diagrams
 \begin{equation}\label{xy}
 \xymatrix{ X \ar[r]^-{a_X} \ar[rd]^-{c_X} \ar[d]^-{s_X} & \Alb X \ar[d]^-{p_X} \\ 
 X' \ar[r]^-{c'_X}   & \Alb Z_X} \qquad \qquad
 \qquad \xymatrix{ Y \ar[r]^-{a_Y} \ar[rd]^-{c_Y} \ar[d]^-{s_Y} & \Alb Y \ar[d]^-{p_Y} \\ 
 Y' \ar[r]^-{c'_Y} & \Alb Z_Y} 
 \end{equation}
   where we keep the notation of \S\ref{R3}, and
	the left-bottom sides of the diagrams
are the Stein factorizations of $c_X$ and $c_Y$. 

In this section we prove that a derived equivalence $\Phi_\E\colon D(X)\rightarrow D(Y)$ induces an isomorphism of the bottom arrows of the above diagrams, 
\emph{i.e.}   of the finite morphisms appearing in  the Stein factorizations of the Albanese-Iitaka morphisms. 
 To introduce the precise statement,  given a morphism of abelian varieties $f \colon A\rightarrow B$, we denote  by $\widehat f \colon \Pic0 B \rightarrow \Pic0 A$ the dual morphism. By dualizing the induced Rouquier isomorphism \eqref{eq:PicZ}, we obtain the isomorphism $\widehat{\varphi_\E}\colon\Alb Z_Y\rightarrow \Alb Z_X$.

\begin{theorem}\label{step1} 
An equivalence $\Phi_\E\colon D(X)\rightarrow D(Y)$ induces
an isomorphism $\psi \colon Y^\prime\rightarrow X^\prime$  such that the following diagram 
\begin{equation}\label{xy2.0}
\xymatrix{X' \ar[d]^{c_X'} &Y' \ar[l]^{\psi}_\sim\ar[d]^{c_Y'}\\
\Alb Z_X & \Alb Z_Y \ar[l]^{\widehat{\varphi_\E}}_\sim}
\end{equation}
is commutative. 
\end{theorem}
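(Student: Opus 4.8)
The plan is to show that the Fourier--Mukai kernel $\E\in D(X\times Y)$ is, in a precise sense, ``fibred'' over the Albanese--Iitaka variety, and then to read off the isomorphism of Stein factorizations. We may assume $\kappa(X)=\kappa(Y)\ge 1$, since if $\kappa(X)=0$ then $Z_X$, $Z_Y$ are points and $X'=Y'=\mathrm{pt}$. Put $B:=\Alb Z_X$ and write $p,q$ for the projections from $X\times Y$. Every $\alpha\in\Pic0 Z_X\subseteq\Pic0 X$ lies in some $V^0(X,\omega_X^{\otimes m})$ (because $\Pic0 Z_X\subseteq G_X$), hence is $R$-stable and, by \eqref{eq:PicZ}, $\varphi_\E(\id_X,\alpha)=(\id_Y,\varphi_\E(\alpha))$ with $\varphi_\E(\alpha)\in\Pic0 Z_Y$. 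For such $\alpha$ one has $P_\alpha\cong c_X^*P^Z_\alpha$ and $Q_{\varphi_\E(\alpha)}\cong c_Y^*Q^Z_{\varphi_\E(\alpha)}\cong c_Y^*\widehat{\varphi_\E}^{\,*}P^Z_\alpha$, where $P^Z$, $Q^Z$ denote the Poincar\'e bundles on $\Alb Z_X$, $\Alb Z_Y$ and the last identity expresses that $\widehat{\varphi_\E}$ is dual to $\varphi_\E$. Hence \eqref{ps} (with $f=\id_X$, $h=\id_Y$) gives $(c_Xp)^*P^Z_\alpha\otimes\E\cong(\widehat{\varphi_\E}c_Yq)^*P^Z_\alpha\otimes\E$ for every $\alpha\in\Pic0 B$. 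Setting $w:=c_Xp-\widehat{\varphi_\E}c_Yq\colon X\times Y\to B$, i.e. $w(x,y)=c_X(x)-\widehat{\varphi_\E}(c_Y(y))$, and using $(\phi-\psi)^*P\cong\phi^*P\otimes(\psi^*P)^{-1}$ for $P\in\Pic0 B$, this becomes
\[
w^*P\otimes\E\;\cong\;\E\qquad\text{for all }P\in\Pic0 B,
\]
and the same holds for $\ad\E$ (which is $\E^\vee$ twisted by line bundles pulled back from $X$ and $Y$).

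The key point --- the adaptation of Kawamata's results on the support of the kernel --- is the following lemma, which I expect to be the main obstacle: \emph{if $\F$ is a nonzero coherent sheaf on a smooth projective variety $V$ equipped with a morphism $w\colon V\to B$ to an abelian variety, and $w^*P\otimes\F\cong\F$ for all $P\in\Pic0 B$, then $w(\Supp\F)$ is a finite set of points.} To prove it I would take an irreducible component $W$ of $\Supp\F$ and a resolution $\mu\colon\widetilde W\to V$ of $W$; then $\mu^*\F$ is nonzero of some generic rank $\rho\ge1$, and passing to its torsion-free quotient and taking determinants in $\mathrm{Pic}(\widetilde W)$ turns the twist-invariance into $\big((w\mu)^*P\big)^{\otimes\rho}\cong\OO_{\widetilde W}$ for all $P\in\Pic0 B$; thus the algebraic-group homomorphism $\Pic0 B\to\Pic0\widetilde W$, $P\mapsto(w\mu)^*P$, has image a connected subgroup annihilated by $\rho$, hence trivial, so $w\mu$ factors through $\Alb\widetilde W\to B=0$ and is constant. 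Applying this to each cohomology sheaf of $\E$ yields that $w(\Supp\E)=:\{b_1,\dots,b_k\}$ is finite, and likewise $w(\Supp\ad\E)\subseteq\{b_1,\dots,b_k\}$.

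Next I would analyse $\mathcal F_x:=\Phi_\E(\OO_x)$ for $x\in X$. Twisting $\OO_x$ by $P_\alpha$ ($\alpha\in\Pic0 Z_X$) and using the first paragraph gives $c_Y^*L\otimes\mathcal F_x\cong\mathcal F_x$ for every $L\in\Pic0(\Alb Z_Y)$, so by the lemma (applied to the cohomology sheaves of $\mathcal F_x$) $c_Y(\Supp\mathcal F_x)$ is finite; since $\End_{D(Y)}(\mathcal F_x)=\End_{D(X)}(\OO_x)=\mathbb C$, the support $\Supp\mathcal F_x$ is connected, hence contained in a single fibre $c_Y^{-1}(t_x)$, and --- as $c_Y^{-1}(t_x)$ is the disjoint union of the connected fibres of $s_Y$ over the finitely many points of $(c_Y')^{-1}(t_x)$ --- in a single fibre $s_Y^{-1}(\sigma(x))$, defining a map $\sigma\colon X\to Y'$. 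Pushing $\E$ forward along $\id_X\times s_Y$ and using that the reduced support of the resulting object maps bijectively, hence isomorphically, onto $X$ shows $\sigma$ is a morphism. The containment $\Supp\E\subseteq w^{-1}(\{b_i\})$ forces $c_Y'(\sigma(x))=t_x=\widehat{\varphi_\E}^{-1}(c_X(x)-b_{i_0})$ with $b_{i_0}$ independent of $x$; replacing $c_X$ by the translate $c_X-b_{i_0}$ (legitimate, $c_X$ being defined up to translation) we get $c_Y'\circ\sigma=\widehat{\varphi_\E}^{-1}\circ c_X$. Then $\sigma$ is constant on the connected fibres of $s_X$ (a connected set mapping to a finite subset of $Y'$), so it factors as $\sigma=\bar\sigma\circ s_X$ with $\bar\sigma\colon X'\to Y'$ a morphism satisfying $c_Y'\circ\bar\sigma=\widehat{\varphi_\E}^{-1}\circ c_X'$. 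Running the same argument for $\Phi_{\ad\E}=\Phi_\E^{-1}$ produces $\bar\tau\colon Y'\to X'$ with $c_X'\circ\bar\tau=\widehat{\varphi_\E}\circ c_Y'$, compatibly with the same translation (a short check with $\Phi_\E^{-1}\Phi_\E=\id$ on a skyscraper shows the relevant components of $\Supp\E$ and $\Supp\ad\E$ both lie over $0\in B$).

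Finally I would upgrade this to an isomorphism. Let $\E_0$ be the part of $\E$ supported over $0\in B$; by the previous paragraph $\Supp\E_0$ surjects onto $X$ and onto $Y$ and lies in $X\times_B Y:=\{\,c_X(x)=\widehat{\varphi_\E}(c_Y(y))\,\}$. Set $\Gamma:=(s_X\times s_Y)(\Supp\E_0)\subseteq X'\times_B Y'$, a closed subvariety finite and surjective over $X'$ and over $Y'$. For general $x'\in X'$ and all $\widetilde x\in s_X^{-1}(x')$ one has (no cancellation in $L\mathbf i_{\widetilde x}^*\E$ for general $\widetilde x$, combined with the previous paragraph) $(\Supp\E)_{\widetilde x}\subseteq s_Y^{-1}(\bar\sigma(x'))$, so $\Gamma\cap(\{x'\}\times Y')$ is a single point; hence $\Gamma\to X'$ is finite, surjective and generically injective, i.e. birational, and since $X'$ is normal (the Stein factorization of a morphism from a smooth variety is normal) Zariski's main theorem gives $\Gamma\xrightarrow{\sim}X'$. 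The symmetric argument with $\bar\tau$ gives $\Gamma\xrightarrow{\sim}Y'$. Composing, $\psi:=\big(\Gamma\xrightarrow{\sim}X'\big)\circ\big(Y'\xrightarrow{\sim}\Gamma\big)\colon Y'\xrightarrow{\sim}X'$, and because $\Gamma\subseteq X'\times_B Y'$ one reads off $c_X'\circ\psi=\widehat{\varphi_\E}\circ c_Y'$, which is the assertion. In this scheme everything outside the lemma of the second paragraph is a combination of the generic-vanishing input of \S\ref{R2}--\S\ref{R3}, the equality $\End(\Phi_\E\OO_x)=\mathbb C$, and normality of Stein factorizations; the lemma itself is where the real work lies.
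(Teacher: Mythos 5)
Your proof is correct and arrives at the same endpoint as the paper's, but the technical engine is genuinely different. Both arguments are Kawamata-style support arguments driven by the Rouquier relation \eqref{ps} restricted to the $R$-stable subgroup $\Pic0 Z_X$ (cf.\ \eqref{eq:PicZ}); the difference lies in how one extracts finiteness over $X'$ and $Y'$ and the commutativity of the square. The paper applies $\mathbf R(s_X\times s_Y)_*(-\otimes L)$ for $L$ sufficiently ample and rules out a curve $C$ contracted by the projection to $X'$ by showing that $({c_Y'}^*Q^\prime_\beta)|_C$ would be torsion for every $\beta$, contradicting finiteness of $c_Y'$; the commutativity is then obtained a posteriori by restricting \eqref{isom2} to the graph and taking determinants. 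You instead isolate a clean standalone lemma --- twist-invariance $w^*P\otimes\F\cong\F$ for all $P\in\Pic0 B$ forces $w(\Supp\F)$ to be finite, proved by determinants on a resolution of each component of the support and triviality of a connected $\rho$-torsion subgroup of $\Pic0$ --- and apply it twice: to $\E$ with $w=c_X\circ p-\widehat{\varphi_\E}\circ c_Y\circ q$, which after a translation gives $\Supp(\E)\subseteq w^{-1}(0)$ and hence the commutativity essentially for free, and to $\Phi_\E(\OO_x)$, whose connected support (from $\mathrm{End}=\CC$) must then land in a single Stein fibre of $s_Y$. Your route localizes the ``real work'' in one reusable lemma and makes the role of the fibre product $X'\times_{\Alb Z_X}Y'$ transparent; the paper's route avoids the fibrewise analysis and gets connectedness of the fibres of the two projections for free from the fact that $\Supp(\E)$ dominates $X$ and $Y$ with connected fibres.

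One small imprecision, not a gap: the support of $\mathbf R(\id_X\times s_Y)_*\E$ need not coincide with $(\id_X\times s_Y)(\Supp\E)$, since the pushforward can cancel (e.g.\ $\mathbf R s_{Y*}$ of a complex concentrated on a single fibre of $s_Y$ may vanish). To see that $\sigma$ is a morphism you should argue directly with the closed set $(\id_X\times s_Y)(\Supp\E)$ --- which your fibrewise analysis identifies with $\{(x,\sigma(x))\}$, a reduced closed subscheme finite and bijective, hence (in characteristic zero, $X$ being normal) isomorphic onto $X$ --- or first twist by a sufficiently ample line bundle exactly as the paper does in \eqref{exchange}. This is a one-line repair.
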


\begin{remark}\label{rem:upto} By 
recalling that the  Albanese-Iitaka morphisms $c_X$ and $c_Y$ are defined only up to translations, the meaning of the above statement is that one can find representatives of $c_X^\prime$ and $c_Y^\prime$ such that diagram \eqref{xy2.0} is commutative.
\end{remark}

\begin{proof} 
We adapt an argument of \cite[Theorem 1]{lombardi-fibrations},   inspired by  Kawamata's kernel support technique \cite{ka2}, to our more general setting.
By keeping  the notation of  diagrams \eqref{xy}, we  consider the support 
  ${\rm Supp}(\E) = \bigcup_j \mathcal H^j(\E)$ of the kernel $\E$ (with the reduced structure). We will show that the image of  ${\rm Supp}(\E) $  via the morphism 
\[(s_X\times s_Y) \colon X \times Y\rightarrow X' \times Y' \]
is the graph of an isomorphism  $\psi \colon Y' \rightarrow X'$ satisfying the conclusion of the statement. 
 
Let $p'$ and $q'$ be the natural projections from $X' \times Y'$ onto the first and second factor, respectively. 
In the first place we note that:\\
  (*) \emph{ the restrictions of the projections $p'$ and $q'$ to $(s_X\times s_Y) \big( {\rm Supp}(\E) \big)$ dominate   $X'$ and $Y'$, respectively, and have connected fibers. }
 	
	\noindent This follows from the well-known fact that
${\rm Supp}(\E)$ itself dominates both $X$ and $Y$, via morphisms with connected fibers (\cite[Lemma 6.4]{huybrechts}). 

Next, we claim that the projection  from $(s_X\times s_Y) \big({\rm Supp}(\E) \big)$ onto $X'$ has finite fibers. 
 In order to prove this, firstly  we   note that, for a sufficiently ample  line bundle $L$ on $X\times Y$, there is an equality of closed subsets
\begin{equation}\label{exchange}
{\rm Supp}\big( \mathbf R(s_X\times s_Y)_*(\E \otimes L) \big) \; = \; (s_X\times s_Y) \big( {\rm Supp} \big(\E) \big).
\end{equation}
Indeed, since $L$ is sufficiently positive, by the degeneration of the hypercohomology spectral sequence computing $R^i(s_X\times s_Y)_*(\E \otimes L)$, we have that 
\begin{equation}\label{eq:hyper}
R^i  ( s_X\times s_Y)_*(\E\otimes L) \;  \cong \;  (s_X\times s_Y)_*\mathcal H^i( \E\otimes L)\>
\end{equation}
 This proves \eqref{exchange}. 
Now, every line bundle on $X$ parametrized by a point $\alpha \in c_X^*\Pic0 Z_X$ is of the form $c_X^*P^\prime_\alpha$, 
where $P^\prime_\alpha$ is a line bundle on $\Alb Z_X$. Similarly, we denote $Q^\prime_\beta$  line bundles parametrized by $\Pic0 Z_Y$.
Thus the relation \eqref{ps} defining the Rouquier isomorphism $\varphi_\E$ can be written as follows
\[ 
p^* c_X^* P^\prime_\alpha \otimes \E \;  \cong \; q^* c_Y^* Q^\prime_{ \varphi_\E( \alpha ) } \otimes \E .
\] 
Tensorizing with our sufficiently positive line bundle $L$ on $X\times Y$ we get 
\begin{equation}\label{isom}
p^* c_X^* P^\prime_\alpha  \otimes \E \otimes L \;  \cong \; q^* c_Y^*Q^\prime_{ \varphi_\E( \alpha ) }  \otimes \E \otimes L.
\end{equation} 
Applying $\mathbf R(s_X\times s_Y)_*$, by projection formula we get
\begin{equation}\label{isom2}
{p'}^*{c'_X}^*P^\prime_\alpha \otimes \mathbf R (s_X\times s_Y)_*(\E\otimes L) \; \cong \; q'^*{c'_Y}^*Q^\prime_{\varphi_\E(\alpha)}
\otimes  \mathbf R (s_X \times s_Y)_*(\E\otimes L).
\end{equation}
Assume that the projection $p'$  contracts a curve $C\subset {\rm Supp}\big( \mathbf R(s_X\times s_Y)_*(\E\otimes L) \big)$. 
Let $i$ be an integer such that the irreducible curve $C$ is contained in ${\rm Supp}\big( R^i(s_X\times s_Y)_*(\E\otimes L) \big)$.
Taking the $i$-th cohomology $\mathcal{H}^i$ in  \eqref{isom2} and then
restricting  to $C$, it follows that
\[
 \big(R^i(s_X\times s_Y)_*(\E\otimes L)\big)|_{C} \; \cong \; {q^\prime}^*{c^\prime_Y}^*Q^\prime_{\beta}\otimes  \big(R^i(s_X\times s_Y)_*(\E\otimes L)\big)|_{ C}
\] 
for all $\beta = \varphi_{\E}(\alpha) \in \Pic0 Z_Y$. 
This implies that $({c'_Y}^*Q^\prime_{\beta})_{|C}$ is of finite order for all $\beta\in \Pic0 Z_Y$.
 In turn, this implies that $C$ is contracted by $c'_Y$. But  $c'_Y$ is a map with finite fibers.  Hence no curve can be contracted by the projection $p^\prime$, as claimed.

It follows in particular that $\dim X' \ge \dim Y'$. Since the role of $X$ and $Y$ can be exchanged we have that $\dim X' = \dim Y'$. 
Moreover, from $(*)$ it follows that $(s_X \times s_Y) \big({\rm Supp}(\E) \big)$  projects isomorphically onto both $X'$ and $Y'$, 
so that the map 
\[\psi := p'\circ q'^{-1} \colon Y' \to X'\]
 is  an isomorphism.

In order to prove the last part of the statement, it is enough to show that the isomorphism $\psi \colon Y^\prime \rightarrow X^\prime$ induces an isomorphism $\psi^* \colon {c_X'}^*\Pic0 Z_X\rightarrow {c_Y'}^*\Pic0 Z_Y$ which agrees with the Rouquier isomorphism $\varphi_\E$ of \eqref{eq:PicZ}. By restricting \eqref{isom2} to   the normal variety ${\rm Graph}(\psi) $, and by  taking determinants,  we find a positive  integer $r$ such that
$$\psi^*  {c^\prime_X}^*(P^\prime_{\alpha})^{\otimes r}  \, \cong \,  
{c_Y^\prime}^*(Q^\prime_{\varphi_{\E}(\alpha) })^{\otimes r}$$ for all  $\alpha\in \Pic0 Z_X$. 
From this we deduce that, for all $\alpha\in \Pic0 Z_X$,  
\[
\psi^*   {c_X^\prime}^*P^\prime_{\alpha} \; \cong \; {c_Y^\prime}^*Q^\prime_{\varphi_{\E}(\alpha) } \>.
\]
This is exactly what needed.
\end{proof}

\section{Relative canonical ring and Hochschild structure of the Albanese-Iitaka morphism}\label{H}

This section is concerned with the derived invariance  of the relative canonical ring under the Albanese-Iitaka morphism (Theorem \ref{thm:invmult}). This will be crucial for the result concerning  the relative canonical ring under  the Albanese morphism (Theorem \ref{thmbasechange}). Along the way, in Subsection \ref{HHoch}, we will  consider a relative version of the algebras in  \cite[Corollary 2.1.10]{orlov} and show its derived invariance  (as graded coherent sheaf, see Corollary \ref{cor:hoch1}).  

The argument involves several steps as follows: we first globalize the isomorphisms (\ref{unmixed}) to an isomorphism involving the Poincar\'e line bundles (Theorem \ref{thm:invpoi} below). This allows the use of the FMP transform, yielding the isomorphisms (\ref{eq:rel-FM}) for FMP transforms, globalizing and generalizing the  isomorphisms of linear spaces (\ref{eq:isoh0v0}). Applying the inverse  FMP transform we get the desired isomorphisms for the relative canonical rings, and, more generally, for the previously mentioned relative algebras  (Theorem \ref{thm:rel-direct-image} and Corollaries \ref{cor:hoch1}, \ref{cor:homology}). Finally, we prove the isomorphism of the multiplicative structure of the relative canonical rings by reducing it to the isomorphism of the paracanonical rings,  again using the FMP transform (Theorem \ref{thm:invmult}). 

\subsection{Preliminary lemma.}\label{HLem} We will need the following preliminary standard Lemma. Let $R,T, A,B$ be smooth projective varieties. Let $\F$ be an object in $D(R\times T)$  and let $f\colon A\rightarrow B$ be an isomorphism. We consider the functor 
\[\Phi_\F\boxtimes f_*=\Phi_{\F\boxtimes \OO_{\Gamma_f}}\colon D(R\times A)\rightarrow D(T\times B)\>,\]
where $\Gamma_f\subset A\times B$ denotes the graph of $f$. 
We denote by $p_A \colon R\times A \to A$ and $p_B \colon T\times B \to B$  the natural projections.

\begin{lemma}\label{lem:prelim} Let $U\in D(R\times A)$ and $V\in D(A)$. Then there is an isomorphism
\[(\Phi_\F\boxtimes f_*)(U\stackrel{\bf L}{\otimes} p_A^* V) \; \cong \; \bigl((\Phi_\F\boxtimes f_*)(U)\bigr)\stackrel{\bf L}{\otimes}p_B^*( f_*V)\>.\]
\end{lemma}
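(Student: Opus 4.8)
The plan is to deduce the lemma from the projection formula by an explicit computation on the correspondence variety, the only substantive point being the geometry of the graph $\Gamma_f$. Write $M:=R\times A\times T\times B$ and denote by $\pi_{RA},\pi_{TB},\pi_{RT},\pi_{AB}$, and $\pi_A,\pi_B$, the projections of $M$ onto the indicated factors, so that, by the definition of the Fourier--Mukai kernel $\F\boxtimes\OO_{\Gamma_f}=\pi_{RT}^*\F\lotimes\pi_{AB}^*\OO_{\Gamma_f}$, one has
\[
(\Phi_\F\boxtimes f_*)(W)\;=\;\mathbf R\pi_{TB*}\bigl(\pi_{RA}^*W\lotimes\pi_{RT}^*\F\lotimes\pi_{AB}^*\OO_{\Gamma_f}\bigr),\qquad W\in D(R\times A).
\]
First I would take $W=U\lotimes p_A^*V$ and use $p_A\circ\pi_{RA}=\pi_A$ to rewrite $\pi_{RA}^*(U\lotimes p_A^*V)\cong\pi_{RA}^*U\lotimes\pi_A^*V$, which moves the factor $\pi_A^*V$ next to the kernel.

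The key step is then the identity $\pi_{AB}^*\OO_{\Gamma_f}\lotimes\pi_A^*V\cong\pi_{AB}^*\OO_{\Gamma_f}\lotimes\pi_B^*(f_*V)$. Since the projection $\pi_{AB}$ is flat, $\pi_{AB}^*\OO_{\Gamma_f}=j_*\OO_Z$ where $j\colon Z=\pi_{AB}^{-1}(\Gamma_f)\hookrightarrow M$; identifying $Z\cong R\times A\times T$ via the projection that forgets $B$, the restrictions $\pi_A|_Z$ and $\pi_B|_Z$ become the projection to $A$ and $f$ composed with that same projection, respectively. Hence $j^*\pi_B^*(f_*V)\cong j^*\pi_A^*(f^*f_*V)\cong j^*\pi_A^*V$ because $f$ is an isomorphism, and the asserted identity follows by applying the projection formula $j_*\OO_Z\lotimes N\cong\mathbf R j_*(\mathbf L j^*N)$ to $N=\pi_A^*V$ and to $N=\pi_B^*(f_*V)$.

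Finally, using $\pi_B=p_B\circ\pi_{TB}$ and the projection formula for $\mathbf R\pi_{TB*}$ to extract $\pi_B^*(f_*V)=\pi_{TB}^*p_B^*(f_*V)$ from the pushforward, one obtains
\[
(\Phi_\F\boxtimes f_*)(U\lotimes p_A^*V)\;\cong\;\mathbf R\pi_{TB*}\bigl(\pi_{RA}^*U\lotimes\pi_{RT}^*\F\lotimes\pi_{AB}^*\OO_{\Gamma_f}\bigr)\lotimes p_B^*(f_*V)\;=\;(\Phi_\F\boxtimes f_*)(U)\lotimes p_B^*(f_*V),
\]
which is the claim. Equivalently, one can organize the same argument by factoring $\Phi_\F\boxtimes f_*=(\id_{D(T)}\boxtimes f_*)\circ(\Phi_\F\boxtimes\id_{D(A)})$: the first functor is a relative Fourier--Mukai functor over $A$, hence $D(A)$-linear by the ordinary projection formula, while the second is pushforward along the isomorphism $\id_T\times f$, for which the statement is the plain projection formula together with $f^*f_*V\cong V$.

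I expect the main obstacle to be purely bookkeeping: keeping track of the many projections of $M$, and checking carefully that $Z=\pi_{AB}^{-1}(\Gamma_f)$ is a product on which $\pi_B$ factors as $f\circ\pi_A$, so that tensoring the graph kernel against a pullback from $A$ coincides with tensoring it against the $f_*$-transported pullback from $B$. Beyond that point everything is formal: all morphisms involved are flat projections, closed immersions of graphs, or isomorphisms, so no flatness, boundedness, or base-change subtleties arise.
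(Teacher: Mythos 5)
Your argument is correct and follows essentially the same route as the paper: move $\pi_A^*V$ next to the kernel, exchange it for $\pi_B^*(f_*V)$ across the graph factor $\OO_{\Gamma_f}$, and extract the result by the projection formula along the projection to $T\times B$. The only difference is that you supply a proof (via the closed immersion of $\pi_{AB}^{-1}(\Gamma_f)$ and $f^*f_*V\cong V$) of the exchange identity on $A\times B$, which the paper simply cites as a basic relation.
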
 
\begin{proof} From the basic relation on $A\times B$ 
\begin{equation}\label{eq:intermediate0} 
q_A^*V\stackrel{\mathbf L}{\otimes}\OO_{\Gamma_f} \; \cong\; q_B^*(f_*V)\lotimes \OO_{\Gamma_f}\>,
\end{equation}
we get the relation on $(R\times A)\times (T\times B)$
\begin{equation}\label{eq:intermediate}\bar p_A^{\,*}V\stackrel{\mathbf L}{\otimes}p_{A\times B}^*\OO_{\Gamma_f} \; \cong \; \bar p_B^{\, *}(f_*V)\lotimes p_{A\times B}^*\OO_{\Gamma_f}\>,
\end{equation}
where in \eqref{eq:intermediate0} $q_A$ and $q_B$ are the projections from $A\times B$ onto $A$ and $B$, respectively, while 
$\bar p_A$ and $\bar p_B$ in \eqref{eq:intermediate}  are the projections from $R\times A\times T\times B$ onto $A$ and $B$, respectively.
Therefore there are isomorphisms
\begin{eqnarray*}\Phi_{\F\boxtimes \OO_{\Gamma_f}}(U\lotimes p_A^*V)&\cong&{\mathbf R}p_{T\times B\,*}\bigl(p_{R\times A}^*(U\lotimes p_A^*V)\lotimes (\F\boxtimes\OO_{\Gamma_f})\bigr)
\\&\cong &{\mathbf R}p_{T\times B\,*}\bigl(p_{R\times A}^*U\lotimes \bar p_A^{\, *}V\lotimes (\F\boxtimes\OO_{\Gamma_f})\bigr)
\\&\buildrel{\eqref{eq:intermediate}}\over\cong&{\mathbf R}p_{T\times B\,*}\bigl(p_{R\times A}^*U\lotimes \bar p_B^{\, *}(f_*V)\lotimes (\F\boxtimes\OO_{\Gamma_f})\bigr)\\
&\cong&
\bigl((\Phi_\F\boxtimes f_*)(U)\bigr)\stackrel{\bf L}{\otimes}p_B^*( f_*V)
\end{eqnarray*}
where the last isomorphism follows by the projection formula.
\end{proof}

\subsection{Setting and notation. }\label{HNot} 
The next step will be the proof of Theorem \ref{thm:invpoi} here below, which is in fact a global version of \eqref{unmixed}.  As usual we assume that there is an equivalence $\Phi_\E \colon D(X)\rightarrow D(Y)$. To establish the notation, recall that the Rouquier isomorphism induces an isomorphism $\varphi_\E \colon \Pic0 Z_X\rightarrow \Pic0 Z_Y$ (see \eqref{eq:PicZ}). 
We choose Poincar\'e line bundles $\cP_Z$ and $\Q_Z$ respectively on $\Alb Z_X\times \Pic0 Z_X$ and $\Alb Z_Y\times \Pic0 Z_Y$, normalized so that $(\cP_Z)|_{\{e_X\}\times \Pic0 Z_X}$ is trivial where $e_X$ is the identity element of $\Alb Z_X$, 
and similarly for $\Q_Z$. From the universal property of the Poincar\'e line bundle, 
it follows that $(\widehat{\varphi_{\E}}^{-1}\times\varphi_\E)^*\Q_Z$ is a Poincar\'e line bundle on $\Alb Z_X$. Since it satisfies the above normalization, it follows that
\[
(\widehat{\varphi_{\E}}^{-1}\times \varphi_\E)^*\Q_Z \; \cong \; \cP_Z\>.
\]

Let 
\[\cP_{Z,X} \; := \; (c_X \times {\rm id}_{\Pic0 Z_X})^*\cP_Z \qquad \mbox{and} \qquad \Q_{Z,Y}\; := \; (c_Y \times {\rm id}_{\Pic0 Z_Y})^*\Q_Z\]
be the induced Poincar\'e line bundles on $X \times \Pic0 Z_X$ and $Y \times \Pic0 Z_Y$, respectively.
We consider the fibered diagonal embedding
\begin{equation}\label{eq:reldiag}
\widetilde{\delta_X} = ( \delta_X \times {\rm id}_{\Pic0 Z_X}) \colon X\times\Pic0 Z_X\rightarrow X\times X\times \Pic0 Z_X, \qquad(x,\alpha)\mapsto (x,x,\alpha)
\end{equation}
and similarly for $Y$.
Finally, we consider the equivalence 
\begin{equation}\label{eq:relequiv}  
\Phi_{\E\, \boxtimes \,\ad\E}\boxtimes {\varphi_\E}_* = 
\Phi_{\E\, \boxtimes \,\ad\E \boxtimes \OO_{\Gamma_{\varphi_{\E}}}}
\colon D(X\times X\times \Pic0 Z_X)\rightarrow D(Y\times Y\times \Pic0 Z_Y)
\end{equation}
where we are adopting the notation of \S\S \ref{D}, \ref{R1} and \ref{R3}.
We also denote by 
$q_X \colon X \times \Pic0 Z_X \to X$ and $q_Y \colon Y \times \Pic0 Z_Y \to Y$ the natural projections onto the first factors.


\subsection{Global version of \eqref{unmixed}.  }\label{Hpluri}

\begin{theorem}\label{thm:invpoi} 
In the setting of \S\ref{HNot} there are isomorphisms  
\[(\Phi_{\E\, \boxtimes \,\ad\E}\boxtimes {\varphi_\E}_*) \big({\widetilde{\delta_X}}_*(q_X^*\omega_X^{\otimes m}\otimes \cP_{Z,X}) \big) \; \cong\; {\widetilde{\delta_Y}}_*(q_Y^*\omega_Y^{\otimes m}\otimes \Q_{Z,Y})\]
for all $m\in \mathbb Z$.	
\end{theorem}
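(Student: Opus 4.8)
The plan is to prove Theorem \ref{thm:invpoi} by reducing it to the fiberwise statement \eqref{unmixed}, using the fact that both sides of the desired isomorphism are objects on $Y \times Y \times \Pic0 Z_Y$ whose restriction to each fiber $\{\beta\}$ over $\Pic0 Z_Y$ is understood. The key observation is that the line bundle $\cP_{Z,X}$ on $X \times \Pic0 Z_X$ restricts, over a point $\alpha \in \Pic0 Z_X$, to $c_X^* P'_\alpha = P_\alpha$ (a topologically trivial line bundle on $X$ in the image of $c_X^* \colon \Pic0 Z_X \hookrightarrow \Pic0 X$), and similarly for $\Q_{Z,Y}$. By the compatibility $(\widehat{\varphi_\E}^{-1} \times \varphi_\E)^* \Q_Z \cong \cP_Z$ established in \S\ref{HNot}, the Rouquier isomorphism matches these parametrized families correctly.

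First I would reduce the $n$-variable statement to a $2$-variable one by invoking Lemma \ref{lem:prelim}. Applying that lemma with $R = T = $ (the relevant products of $X$'s and $Y$'s), $A = \Pic0 Z_X$, $B = \Pic0 Z_Y$, $f = \varphi_\E$, $\F = \E \boxtimes \ad\E$, and $V = \OO$, one sees that the functor $\Phi_{\E \boxtimes \ad\E} \boxtimes {\varphi_\E}_*$ is compatible with pullbacks from the Picard factor; more precisely, one should unwind ${\widetilde{\delta_X}}_*(q_X^* \omega_X^{\otimes m} \otimes \cP_{Z,X})$ as a pushforward along $\widetilde{\delta_X}$ and express $\cP_{Z,X}$ via $(c_X \times \id)^* \cP_Z$ so that base-change/projection-formula identities propagate the twist. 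The upshot is that it suffices to check the isomorphism after restriction to each closed point of $\Pic0 Z_Y$ (using that a morphism of coherent sheaves — or an object in the derived category flat over the base — which is an isomorphism on all fibers is an isomorphism, combined with a direct construction of the comparison map rather than merely a fiberwise abstract isomorphism).

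Next, over a point $\alpha \in \Pic0 Z_X$ with $\beta = \varphi_\E(\alpha) \in \Pic0 Z_Y$, the left-hand side restricts to $\Phi_{\E \boxtimes \ad\E}\big(\delta_{X*}(\omega_X^{\otimes m} \otimes P_\alpha)\big)$, where $P_\alpha = c_X^* P'_\alpha$, and since $\alpha$ lies in the $R$-stable locus (it is in $c_X^* \Pic0 Z_X \subseteq G_X$, hence $R$-stable by \S\ref{R2}), formula \eqref{unmixed} identifies this with $\delta_{Y*}(\omega_Y^{\otimes m} \otimes Q_{\varphi_\E(\alpha)}) = \delta_{Y*}(\omega_Y^{\otimes m} \otimes Q_\beta)$, which is precisely the restriction of the right-hand side to $\{\beta\}$. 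To upgrade these pointwise isomorphisms to a global one, I would produce a canonical morphism between the two sides — for instance, by observing that both objects are supported along $\widetilde{\delta_Y}(Y \times \Pic0 Z_Y)$ and are, up to the diagonal pushforward, line bundles on a smooth variety (hence flat over $\Pic0 Z_Y$), so that a global map inducing the fiberwise isomorphisms \eqref{unmixed} is automatically an isomorphism.

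\textbf{The main obstacle} will be organizing the bookkeeping so that the global comparison map actually exists and is shown to restrict to the maps of \eqref{unmixed} fiberwise, rather than merely asserting an abstract fiberwise isomorphism (which alone does not glue). The cleanest route is probably to deduce the statement directly at the level of kernels: both $\E \boxtimes \ad\E \boxtimes \OO_{\Gamma_{\varphi_\E}}$ applied to ${\widetilde{\delta_X}}_*(q_X^* \omega_X^{\otimes m} \otimes \cP_{Z,X})$ and the target ${\widetilde{\delta_Y}}_*(q_Y^*\omega_Y^{\otimes m} \otimes \Q_{Z,Y})$ should be rewritten, using \eqref{lombardi1}/\eqref{unmixed} together with Lemma \ref{lem:prelim} and the identity $(\widehat{\varphi_\E}^{-1} \times \varphi_\E)^*\Q_Z \cong \cP_Z$, as $\mathbf{R}$-pushforwards of the same complex on an auxiliary product; the projection formula and flat base change then yield the isomorphism without ever passing to fibers. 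I expect that carefully chasing the Poincaré-bundle normalizations through these identifications — ensuring the twisting parameter is transported by $\varphi_\E$ and not its inverse — is where the real care is needed, though no deep new idea beyond \eqref{unmixed} and Lemma \ref{lem:prelim} should be required.
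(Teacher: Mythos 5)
Your first half matches the paper's argument: using Lemma \ref{lem:prelim} to commute restriction to a fiber $\{\beta\}\subset\Pic0 Z_Y$ with the functor $\Phi_{\E\boxtimes\ad\E}\boxtimes{\varphi_\E}_*$, and then invoking \eqref{unmixed} (points of $c_X^*\Pic0 Z_X$ are indeed $R$-stable), one gets that both sides agree after restriction to every $Y\times Y\times\{\beta\}$. The gap is in the globalization. You correctly observe that abstract fiberwise isomorphisms do not glue, but your proposed fix --- ``produce a canonical morphism between the two sides inducing the fiberwise isomorphisms'' --- is never constructed, and there is no evident way to construct it: the isomorphisms \eqref{unmixed} are not functorial in $\alpha$, so there is no natural transformation to globalize, and your alternative ``rewrite both sides as pushforwards of the same complex using \eqref{unmixed}'' is circular, since \eqref{unmixed} is itself only the pointwise statement whose family version is exactly what is being proved.

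The paper resolves this differently, and the mechanism matters. Since both objects are (pushforwards of) line bundles on $\Delta_Y\times\Pic0 Z_Y$ agreeing on every fiber over $\Pic0 Z_Y$, the \emph{see-saw principle} gives a global isomorphism only up to a twist by $p_{\Pic0 Z_Y}^*L$ for some line bundle $L$ on $\Pic0 Z_Y$. One must then kill $L$ by restricting to a second family of fibers, namely $\{(y_0,y_0)\}\times\Pic0 Z_Y$ with $c_Y(y_0)=e_Y$, and computing that this restriction of the left-hand side is trivial. That computation is not formal: it uses the chosen normalizations of $\cP_Z$ and $\Q_Z$, the identity $(\widehat{\varphi_\E}^{-1}\times\varphi_\E)^*\Q_Z\cong\cP_Z$, and crucially the support statement from Theorem \ref{step1} (that $\E\lotimes\ad\E\lotimes\OO_{X\times\{y_0\}}$ is supported on $c_X^{-1}(e_X)$) to identify the restriction with ${\varphi_\E}_*(\cP_Z\otimes\OO_{\{e_X\}\times\Pic0 Z_X})$. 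Your proposal contains neither the see-saw step nor this normalization argument, so the proof is incomplete as it stands.
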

\begin{proof}  
As usual, given a closed point $\alpha\in \Pic0 Z_X$, we denote by $P_{Z,\alpha}$ the corresponding line bundle on $X$ 
(\emph{i.e.}   $c_X^*\big( \cP_{Z} |_{\Alb Z_X\times\{\alpha\}} \big))$, and we will adopt the notation $Q_{Z,\beta}$  for line bundles on $Y$ parametrized by points in $\Pic0 Z_Y$.  To begin with, we claim that, for all closed points $\beta$ of $\Pic0 Z_Y$, there are isomorphisms
\begin{equation}\label{eq:lombardi-p}
(\Phi_{\E\, \boxtimes \,\ad\E}\boxtimes {\varphi_\E}_*) \big( {\widetilde{\delta_X}}_*(q_X^*\omega_X^{\otimes m}\otimes \cP_{Z,X}) \big) 
\otimes \OO_{Y\times Y\times\{\beta\}} \; \cong\; {\delta_Y}_*(\omega_Y^{\otimes m}\otimes Q_{Z,\beta}).
\end{equation}
Indeed, for $\alpha=\varphi_\E^{-1}(\beta)$, we have that
\begin{eqnarray*}
\Phi_{\E\boxtimes \mathrm{ad}\E} \big( {\delta_X}_*(\omega_X^{\otimes m}\otimes P_{Z, \alpha}) \big)&\cong &
(\Phi_{\E\, \boxtimes \,\ad\E}\boxtimes {\varphi_\E}_*) \big( {\widetilde{\delta_X}}_*(q_X^*\omega_X^{\otimes m}\otimes \cP_{Z,X})\otimes \OO_{X\times X\times\{\alpha\}} \big)\\
&\cong & 
(\Phi_{\E\, \boxtimes \,\ad\E}\boxtimes {\varphi_\E}_*) \big({\widetilde{\delta_X}}_*(q_X^*\omega_X^{\otimes m}\otimes \cP_{Z,X}) \big) \lotimes \OO_{Y\times Y\times\{\beta\}}
\end{eqnarray*}
where the last isomorphism follows from Lemma \ref{lem:prelim} (note that, because of the flatness of ${\widetilde{\delta_X}}_*(q_X^*\omega_X^{\otimes m}\otimes \cP_{Z,X})$, the second underived tensor product in the right-hand side of the first line coincides with the derived tensor product). Therefore,  from the above quoted \eqref{unmixed}, it follows that
\[
(\Phi_{\E\, \boxtimes \,\ad\E}\boxtimes {\varphi_\E}_*) \big({\widetilde{\delta_X}}_*(q_X^*\omega_X^{\otimes m}\otimes \cP_{Z,X}) \big) \lotimes \OO_{Y\times Y\times\{\beta\}} \; \cong \; {\delta_Y}_*(\omega_Y^{\otimes m}\otimes Q_{Z, \beta})\> .
\]
Note that this implies that the derived tensor product in the left-hand side is in fact concentrated in degree zero. In conclusion,  we have proved \eqref{eq:lombardi-p}.

By the see-saw principle,  \eqref{eq:lombardi-p} implies that 
\begin{equation}\label{eq:inverse}
(\Phi_{\E\, \boxtimes \,\ad\E}\boxtimes {\varphi_\E}_*) \big( {\widetilde{\delta_X}}_*(q_X^*\omega_X^{\otimes m}\otimes \cP_{Z,X}) \big)
\; \cong \; {\widetilde{\delta_Y}}_*(q_Y^*\omega_Y^{\otimes m}\otimes\Q_{Z,Y})\otimes p_{\Pic0 Z_Y}^*L
\end{equation}
where $L$ is a line bundle on $\Pic0 Z_Y$ and 
$p_{\Pic0 Z_Y} \colon Y\times Y\times \Pic0 Z_Y \to \Pic0 Z_Y$ is the projection onto the third factor. 

Let $\Delta_Y$ be the diagonal in $Y\times Y$.
Finally, we prove that the line bundle $L$ is trivial by proving that the restriction to $\{(y_0,y_0)\}\times \Pic0 Z_Y$ of the line bundle on $\Delta_Y\times\Pic0 Y$ appearing in the left-hand side of \eqref{eq:inverse}  is the trivial line bundle, where $y_0\in Y$ is a point such that $c_Y(y_0)=e_Y$ (the identity 
element of $\Alb Z_Y$). Such restricted line bundle is the complex (concentrated in degree $0$)
\begin{equation}
(\Phi_{\E\, \boxtimes \,\ad\E}\boxtimes {\varphi_\E}_*) \big({\widetilde{\delta_X}}_*(q_X^*\omega_X^{\otimes m}\otimes \cP_{Z,X}) \big) 
\lotimes \OO_{\{(y_0,y_0)\}\times\Pic0 Z_Y}\>.
\end{equation}
By the projection formula, such line bundle can be described as 
\begin{equation}\label{eq:pformula}
 \R\widetilde q_*\bigl({\widetilde p}^* ( {\widetilde{\delta_X}}_*(q_X^*\omega_X^{\otimes m}\otimes \cP_{Z,X} ))\lotimes (\E\,\boxtimes\, \ad\E\boxtimes\OO_{\Gamma_{\varphi_\E}})\lotimes \OO_{X\times X\times \Pic0 Z_X \times \{(y_0,y_0)\}\times\Pic0 Z_Y}\bigr),
\end{equation}
where $\widetilde p$ and $\widetilde q$ are the projections from $(X\times X\times\Pic0 Z_X)\times (Y\times Y\times \Pic0 Z_Y)$
onto the first and second factor, respectively. 
By means of Theorem \ref{step1} one can prove 
that $(\E \lotimes \ad\E) \lotimes \OO_{X \times \{y_0\}}$ 
is set-theoretically supported on the fiber of $c_X$ over the identity point  $e_X \in \Alb Z_X$.\footnote{This goes as follows. 
By construction $(s_X \times s_Y)(\Supp(\E)) = {\rm Graph}(\psi)$ (see the proof of Theorem \ref{step1}), hence $\Supp(\E)$ is contained in $(s_X \times s_Y)^{-1}({\rm Graph}(\psi))$. So 
\[
\Supp( \E \lotimes \ad\E \lotimes \OO_{X \times \{y_0\}}) \subseteq 
\Supp(\E \lotimes \OO_{X \times \{y_0\}}) = \Supp(\E)\cap (X \times \{y_0\})\subseteq s_X^{-1}(\psi(s_Y(y_0))),
\]
where the first inclusion comes from the spectral sequence in \cite[(3.9)]{huybrechts}. Now 
$c_X'(\psi(s_Y(y_0))) = e_X$, because of $c_Y(y_0) = e_Y$ and the commutativity of \eqref{xy2.0}.}  
By using this, together  with  some calculations, it follows that  up to tensorizing with a trivial locally free sheaf of rank one, 
the line bundle  \eqref{eq:pformula} (on $\{(y_0,y_0)\}\times\Pic0 Z_Y$)  is isomorphic to ${\varphi_\E}_*(\cP_{Z}\otimes\OO_{\{e_X\}\times\Pic0 Z_X})$, which is trivial. This concludes the proof.
\end{proof}

The following variant will be useful in the next section. 
\begin{variant}\label{var:variant1}
Let $\bar \alpha\in \Pic0 X$ be a point parametrizing an $R$-stable line bundle $P_{\bar\alpha}$ \emph{(\S\ref{R1})}. 
Then for all $m\in \mathbb Z$ there
are isomorphisms
\[
(\Phi_{\E\, \boxtimes \,\ad\E}\boxtimes {\varphi_\E}_*)({\widetilde{\delta_X}}_*(q_X^*(\omega_X^{\otimes m}\otimes P_{\bar\alpha})\otimes \cP_{Z,X})) 
\; \cong \; {\widetilde{\delta_Y}}_*(q_Y^*(\omega_Y^{\otimes m}\otimes Q_{\varphi_\E(\bar\alpha)})\otimes \Q_{Z,Y}).
\]
\end{variant}
The proof is exactly the same as that of Theorem \ref{thm:invpoi}.

\subsection{Main result.}\label{Hdirect}

\begin{theorem}\label{thm:rel-direct-image} 
By keeping the previous notation and setting, there is an isomorphism of functors
\[
\mathbf R\,  {c_Y}_*\circ \mathbf L\delta_Y^* \; \cong \; \widehat{{\varphi_\E}}^* \circ \mathbf R\, {c_X}_*\circ \mathbf L\delta_X^*\circ \Phi_{\E\,\boxtimes\, \ad\E}\>,
\]
where $\widehat{{\varphi_\E}}$ denotes  the dual isomorphism of the Rouquier isomorphism 
and the equivalence $ \Phi_{\E\,\boxtimes\, \ad\E}$ is taken in the opposite direction:
\[
 \Phi_{\E\,\boxtimes\, \ad\E} \colon  D(Y\times Y)\rightarrow D(X\times X)\>.
 \]
\end{theorem}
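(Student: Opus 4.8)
The plan is to deduce the isomorphism of functors from the object-level isomorphism of Theorem \ref{thm:invpoi} by applying the Fourier--Mukai--Poincar\'e transform on the Albanese-Iitaka varieties, exploiting the fact that $\mathbf{L}\delta_X^*$ composed with $\mathbf{R}{c_X}_*$ is nothing but ``integration against the fibered diagonal'' which is already encoded in the kernel ${\widetilde{\delta_X}}_*(q_X^*(-)\otimes \cP_{Z,X})$. More precisely, I would first observe that for any object $M\in D(X\times X)$ one has, by base change and the projection formula along the various product projections,
\[
\Phi_{\cP_Z}\bigl(\mathbf{R}{c_X}_*\,\mathbf{L}\delta_X^* M\bigr) \;\cong\; \mathbf{R}p_{\Pic0 Z_X,\ast}\bigl(p_{X\times X}^* M \lotimes {\widetilde{\delta_X}}_*(q_X^*\OO_X\otimes \cP_{Z,X})\bigr)\otimes(\text{shift}),
\]
i.e. the composite functor $\Phi_{\cP_Z}\circ \mathbf{R}{c_X}_*\circ \mathbf{L}\delta_X^*$ is itself an integral functor $D(X\times X)\to D(\Pic0 Z_X)$ with kernel ${\widetilde{\delta_X}}_*(q_X^*\OO_X\otimes \cP_{Z,X})$; likewise for $Y$. (This is the standard manipulation by which $\mathbf{L}\delta^*$ and $\mathbf{R}c_*$ get absorbed into a kernel, cf. the diagonal kernel description in \cite[\S5]{huybrechts}.)

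Next I would compose on the right with the equivalence $\Phi_{\E\boxtimes\ad\E}\colon D(Y\times Y)\to D(X\times X)$. Since composition of integral functors corresponds to convolution of kernels, the composite $\Phi_{\cP_Z}\circ \mathbf{R}{c_X}_*\circ \mathbf{L}\delta_X^*\circ \Phi_{\E\boxtimes\ad\E}$ is again an integral functor $D(Y\times Y)\to D(\Pic0 Z_X)$, and its kernel is the image under $\Phi_{\E\boxtimes\ad\E}\boxtimes \mathrm{id}$ of ${\widetilde{\delta_X}}_*(q_X^*\OO_X\otimes \cP_{Z,X})$ — here using that $\Phi_{\E\boxtimes\ad\E}$ is a quasi-inverse pair of equivalences so that convolving with $\ad(\E\boxtimes\ad\E)$ and then with the diagonal kernel just pulls the kernel back. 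Now Theorem \ref{thm:invpoi}, together with the relation $(\widehat{\varphi_\E}^{-1}\times\varphi_\E)^*\Q_Z\cong \cP_Z$ recorded in \S\ref{HNot}, identifies this transported kernel (after the further twist by ${\varphi_\E}_*$ that turns $\Pic0 Z_X$ into $\Pic0 Z_Y$) with ${\widetilde{\delta_Y}}_*(q_Y^*\OO_Y\otimes \Q_{Z,Y})$; combined with the case $m=0$ of Theorem \ref{thm:invpoi} (with the $P_{\bar\alpha}$-twist of Variant \ref{var:variant1} if one wants the full strength), this shows
\[
\Phi_{\cP_Z}\circ \mathbf{R}{c_X}_*\circ \mathbf{L}\delta_X^*\circ \Phi_{\E\boxtimes\ad\E} \;\cong\; \bigl({\varphi_\E}_*\bigr)^{-1}\circ \Phi_{\Q_Z}\circ \mathbf{R}{c_Y}_*\circ \mathbf{L}\delta_Y^*
\]
as functors $D(Y\times Y)\to D(\Pic0 Z_X)$, where ${\varphi_\E}_*\colon D(\Pic0 Z_X)\to D(\Pic0 Z_Y)$ is pushforward along the Rouquier isomorphism. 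Finally I would apply the inverse FMP transform $\Phi_{\cP_Z}^{-1}$ on $\Pic0 Z_X$ and translate the resulting equality, via $(\widehat{\varphi_\E}^{-1}\times\varphi_\E)^*\Q_Z\cong \cP_Z$, into the statement: the FMP transform exchanges pushforward along $\varphi_\E$ on $\Pic0$-side with pullback along the dual $\widehat{\varphi_\E}$ on the $\Alb Z$-side, so $\Phi_{\cP_Z}^{-1}\circ ({\varphi_\E}_*)^{-1}\circ \Phi_{\Q_Z}\cong \widehat{\varphi_\E}^*$, which is exactly the factor in front of $\mathbf{R}{c_X}_*\circ\mathbf{L}\delta_X^*$ in the desired formula.

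The main obstacle I expect is purely bookkeeping: carefully matching kernels across the two product varieties $X\times X$, $Y\times Y$, $\Pic0 Z_X$, $\Pic0 Z_Y$ and keeping track of the normalizations of $\cP_Z$ and $\Q_Z$ and of the shift $[\dim]$ introduced by $\mathbf{L}\delta^*$ and by the (symmetric) FMP transform, so that no spurious line bundle or cohomological shift survives. In particular one must justify that convolving the image of ${\widetilde{\delta_X}}_*(q_X^*(-)\otimes\cP_{Z,X})$ under the equivalence $\Phi_{\E\boxtimes\ad\E}\boxtimes\mathrm{id}$ against $\mathbf{L}\delta_Y^*$ precisely reproduces the kernel computed in Theorem \ref{thm:invpoi} — this is where the identity $\gamma_{\mathrm{id}}=\delta$ and the compatibility of $\boxtimes$-kernels with composition (as in \cite[Proposition 5.10]{huybrechts}) are used. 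Once the kernel identification is in place, the passage to functors is formal, since an integral functor is determined by its kernel.
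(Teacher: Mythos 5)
Your proposal is correct and follows essentially the same route as the paper: identify $\Phi_{\cP_Z}\circ \mathbf R{c_X}_*\circ\mathbf L\delta_X^*$ with the integral functor of kernel ${\widetilde{\delta_X}}_*(\cP_{Z,X})$, transport that kernel via Orlov's compatibility of $\boxtimes$-kernels with composition together with the $m=0$ case of Theorem \ref{thm:invpoi}, and then cancel the FMP transforms using Mukai's exchange $\Phi_{\Q_Z}\circ{\varphi_\E}_*\cong\widehat{\varphi_\E}^*\circ\Phi_{\cP_Z}$. The bookkeeping issues you flag (shifts, normalizations, the quasi-inverse $(-1)^*\circ\Phi_{\Q_Z}[q]$) are exactly the ones the paper handles, and they work out as you expect.
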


\begin{proof}   To begin with, we claim that there are isomorphisms 
\begin{equation}\label{eq:fourier-mukai}
\Phi_{{\widetilde{\delta_X}}_*(\cP_{Z,X})} \; \cong \; \Phi_{\cP_Z}\circ \mathbf R {c_X}_*\circ \mathbf L\delta_X^*
\qquad\hbox{and}\qquad 
\Phi_{{\widetilde{\delta_Y}}_*(\Q_{Z,Y})}\; \cong \; \Phi_{\Q_Z}\circ \mathbf R {c_Y}_*\circ \mathbf L\delta_Y^*.
\end{equation}
In order to prove this, we first note that, by employing \cite[Proposition 2.1.2]{orlov} describing the kernel of the composition of integral functors,  the functors
\[
\Phi_{{\widetilde{\delta_X}}_*(q_X^*\omega_X^{\otimes m}\otimes \cP_{Z,X})} \colon D(X\times X)\rightarrow D(\Pic0 Z_X),\qquad\Phi_{{\widetilde{\delta_Y}}_*(q_Y^*\omega_Y^{\otimes m}\otimes \Q_{Z,Y})}: D(Y\times Y)\rightarrow D(\Pic0 Z_Y)
\]
verify the isomorphisms:
\begin{gather}\label{eq:rel-composition}
\Phi_{{\widetilde{\delta_X}}_*(q_X^*\omega_X^{\otimes m}\otimes \cP_{Z,X})}(-) \; 
\cong \; \Phi_{\cP_{Z,X}}(\mathbf L\delta_X^*(-)\otimes \omega_X^{\otimes m}) \\\notag 
\qquad \mbox{and} \qquad  \Phi_{{\widetilde{\delta_Y}}_*(q_Y^*\omega_Y^{\otimes m}\otimes \Q_{Z,Y})}(-) \; \cong\; \Phi_{\Q_{Z,Y}}(\mathbf L\delta_Y^*(-)\otimes \omega_Y^{\otimes m}).
\end{gather}
 For $m=0$ we get
 \begin{equation}\label{eq:rel-composition1}
\Phi_{{\widetilde{\delta_X}}_*( \cP_{Z,X})} \; \cong \; \Phi_{\cP_{Z,X}}\circ \mathbf L\delta_X^* \qquad \mbox{and} 
\qquad \Phi_{{\widetilde{\delta_Y}}_*( \mathcal{Q}_{Z,Y})} \; \cong \; \Phi_{\Q_{Z,Y}}\circ \mathbf L\delta_Y^*.
\end{equation}
On the other hand, since $\cP_{Z,X}=(c_X \times \mathrm{id})^*\cP_Z$ and $\Q_{Z,Y}=(c_Y \times \mathrm{id})^*\Q_Z$, from projection formula it follows  that
\begin{equation}\label{eq:composition2}
\Phi_{\cP_{Z,X}} \; \cong \; \Phi_{\cP_Z}\circ \mathbf R \, {c_X}_*\qquad \Phi_{\Q_{Z,Y}} \; \cong \; \Phi_{\Q_Z}\circ  \mathbf{R}\, {c_Y}_*\>.
\end{equation}
By plugging \eqref{eq:composition2} 
into \eqref{eq:rel-composition1} we get what claimed, namely \eqref{eq:fourier-mukai}. 

The next step consists in showing that Theorem \ref{thm:invpoi} yields the following isomorphism of functors, involving the Fourier-Mukai-Poincar\'e equivalences $\Phi_{\cP_Z} \colon D(\Alb Z_X)\rightarrow D(\Pic0 Z_X)$ and $\Phi_{\Q_Z} \colon D(\Alb Z_Y)\rightarrow D(\Pic0 Z_Y)$:
\begin{equation}\label{eq:rel-FM}
\Phi_{\Q_Z}\circ \mathbf R\,  {c_Y}_*\circ \mathbf L\delta_Y^* \; \cong \; {\varphi_\E}_* \circ \Phi_{\cP_Z}\circ \mathbf R\, {c_X}_*\circ \mathbf L\delta_X^*\circ \Phi_{\E\,\boxtimes\, \ad\E}\>,
\end{equation}
where  $\Phi_{\E\,\boxtimes\, \ad\E} $ is taken in the opposite direction.

In order to prove this fact,  we recall first   Orlov's relation  \cite[Proposition 2.1.6]{orlov} (or \cite[Exercise 5.13(ii)]{huybrechts}): given two equivalences $\Phi_{\F_i}:D(X_i)\rightarrow D(Y_i)$ for $i=1,2$, and an object $\mathcal R\in D(X_1\times X_2)$, then the image $\mathcal S:=\Phi_{\F_1\boxtimes\F_2}(\mathcal R)$ satisfies the relation
\[ 
\Phi_\mathcal S \; \cong \;  \Phi_{\F_2}\circ \Phi_\mathcal R\circ \Phi_{\F_1}\>,
\]
where the kernel $\F_1$ is used in the opposite direction, \emph{i.e.}   $\Phi_{\F_1}:D(Y_1)\rightarrow D(X_1)$. 
Now we apply this to 
$\Phi_{\E\, \boxtimes \,\ad\E}\boxtimes {\varphi_\E}_* \; \cong \; \Phi_{(\E\, \boxtimes \,\ad\E)\boxtimes\OO_{\Gamma_{\varphi_\E}}}$ and the object $\mathcal R={\widetilde{\delta_X}}_*(p_X^*\omega_X^{\otimes m}\otimes \cP_{Z,X})$. By combining
with Theorem \ref{thm:invpoi}, 
we get the relation
\[
\Phi_{{\widetilde{\delta_Y}}_*(q_Y^*\omega_Y^{\otimes m}\otimes \Q_{Z,Y})} \; \cong \;  {\varphi_\E}_* \circ \Phi_{{\widetilde{\delta_X}}_*(q_X^*\omega_X^{\otimes m}\otimes \cP_{Z,X})}\circ \Phi_{\E\,\boxtimes\, \ad\E}\>.
\]
For $m=0$ we get
 \begin{equation}\label{eq:rel-composition3}
\Phi_{{\widetilde{\delta_Y}}_*( \Q_{Z,Y})} \; \cong\; {\varphi_\E}_* \circ \Phi_{{\widetilde{\delta_X}}_*(\cP_{Z,X})}\circ \Phi_{\E\,\boxtimes\, \ad\E}\>.
\end{equation}
 By plugging \eqref{eq:fourier-mukai} into \eqref{eq:rel-composition3}, we  get \eqref{eq:rel-FM}. 

Finally, let us recall that a quasi-inverse  of the FMP equivalence $\Phi_{\mathcal Q_Z} \colon 
D(\Alb Z_Y)\rightarrow D(\Pic0 Z_Y)$  is $(-1_{\Alb Z_Y})^* \circ \Phi_{\mathcal Q_Z}[q]$, where now $\Phi_{\mathcal Q_Z}$ goes in the opposite direction and $q=\dim \Alb Z_Y$. We recall the commutativity  relation between isogenies of abelian varieties (in particular isomorphisms) and the FMP equivalence:
\[ 
\Phi_{\mathcal Q_Z}\circ {\varphi_\E}_* \; \cong\;  \widehat{\varphi_\E}^*\circ \Phi_{\mathcal P_Z}
\] 
(\cite[(3.4)]{mukai}). Therefore, by applying  the equivalence $(-1_{\Alb Z_Y})^*\circ \Phi_{\mathcal Q_Z}[q]$ to both sides of  \eqref{eq:rel-FM} we get the statement.
\end{proof}


\subsection{Relative  structures.}\label{HHoch}
We recall the Kostant-Hochschild-Rosenberg quasi-isomorphism (see e.g.\cite{calda}):
\begin{equation}\label{eq:HKR}
I\colon\>\mathbf L\delta_X^*({\delta_X}_*\omega_X^{\otimes m})\buildrel\sim\over\longrightarrow \bigoplus_{i=0}^{\dim X}L^{-i}\delta_X^*({\delta_X}_*\omega_X^{\otimes m})[i]=\bigoplus_{i=0}^{\dim X}\Omega^i_X\otimes\omega_X^{\otimes m}[i].
\end{equation}
By combining this with Theorem \ref{thm:rel-direct-image}, we get the derived invariance of the relative structure
\eqref{eq:relstr} of the Introduction.

\begin{corollary}\label{cor:hoch1}
 For any $m\in \mathbb Z$ we have isomorphisms
\[
\bigoplus_{q=0}^{\dim X}\widehat{\varphi_\E}^*\,\mathbf R\,  {c_X}_*(\Omega_X^q \otimes \omega_X^{\otimes m})[q]
 \; \cong \; \bigoplus_{q=0}^{\dim Y} \mathbf R\,  {c_Y}_*(\Omega_Y^q \otimes \omega_Y^{\otimes m})[q].
\]
\end{corollary}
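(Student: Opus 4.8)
The plan is to deduce Corollary \ref{cor:hoch1} from Theorem \ref{thm:rel-direct-image} by feeding the latter the Hochschild--Kostant--Rosenberg resolution \eqref{eq:HKR} and exploiting the compatibility of \eqref{eq:HKR} under derived equivalences. First I would recall Orlov's result \cite[Corollary 2.1.10]{orlov} (see also \cite{calda}) that the HKR isomorphism is functorial with respect to Fourier--Mukai equivalences, in the precise sense that $\Phi_{\E\,\boxtimes\,\ad\E}$ sends $\delta_{X*}\omega_X^{\otimes m}$ to $\delta_{Y*}\omega_Y^{\otimes m}$ (this is \eqref{orlov-classical}) and intertwines the corresponding HKR decompositions $I_X$ and $I_Y$; equivalently, $\mathbf L\delta_Y^*\circ\Phi_{\E\,\boxtimes\,\ad\E}$ applied to $\delta_{X*}\omega_X^{\otimes m}$ is identified, via $I_Y$, with $\bigoplus_q \Omega_Y^q\otimes\omega_Y^{\otimes m}[q]$, while $I_X$ identifies $\mathbf L\delta_X^*(\delta_{X*}\omega_X^{\otimes m})$ with $\bigoplus_q \Omega_X^q\otimes\omega_X^{\otimes m}[q]$.

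Next I would simply evaluate the isomorphism of functors in Theorem \ref{thm:rel-direct-image} at the object $\delta_{X*}\omega_X^{\otimes m}\in D(Y\times Y)$ --- more precisely at $\delta_{Y*}\omega_Y^{\otimes m}$, since $\Phi_{\E\,\boxtimes\,\ad\E}$ is taken in the opposite direction $D(Y\times Y)\to D(X\times X)$ and carries $\delta_{Y*}\omega_Y^{\otimes m}$ to $\delta_{X*}\omega_X^{\otimes m}$ by \eqref{orlov-classical}. On the left-hand side this gives $\mathbf R\,{c_Y}_*\mathbf L\delta_Y^*(\delta_{Y*}\omega_Y^{\otimes m})$, which by \eqref{eq:HKR} is $\bigoplus_{q=0}^{\dim Y}\mathbf R\,{c_Y}_*(\Omega_Y^q\otimes\omega_Y^{\otimes m})[q]$. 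On the right-hand side we get $\widehat{\varphi_\E}^*\,\mathbf R\,{c_X}_*\mathbf L\delta_X^*\bigl(\Phi_{\E\,\boxtimes\,\ad\E}(\delta_{Y*}\omega_Y^{\otimes m})\bigr)=\widehat{\varphi_\E}^*\,\mathbf R\,{c_X}_*\mathbf L\delta_X^*(\delta_{X*}\omega_X^{\otimes m})$, which by \eqref{eq:HKR} again is $\bigoplus_{q=0}^{\dim X}\widehat{\varphi_\E}^*\,\mathbf R\,{c_X}_*(\Omega_X^q\otimes\omega_X^{\otimes m})[q]$. Comparing the two sides yields exactly the asserted isomorphism.

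The only genuinely delicate point is making sure the two uses of \eqref{eq:HKR} are legitimate on both sides of a functor isomorphism; this is not really an obstacle, since Theorem \ref{thm:rel-direct-image} is an isomorphism of functors $\mathbf R\,{c_Y}_*\circ\mathbf L\delta_Y^*\cong\widehat{\varphi_\E}^*\circ\mathbf R\,{c_X}_*\circ\mathbf L\delta_X^*\circ\Phi_{\E\,\boxtimes\,\ad\E}$, so one is free to post-compose both sides with the isomorphism of functors induced by HKR and then evaluate. I expect the writeup to be short: cite the functoriality of HKR under Fourier--Mukai transforms, apply Theorem \ref{thm:rel-direct-image} together with \eqref{orlov-classical}, and rewrite both sides using \eqref{eq:HKR}.
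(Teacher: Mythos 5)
Your proposal is correct and is essentially the paper's own argument: plug $\delta_{Y*}\omega_Y^{\otimes m}$ into Theorem \ref{thm:rel-direct-image}, use \eqref{orlov-classical} (read in the opposite direction) to identify $\Phi_{\E\boxtimes\ad\E}(\delta_{Y*}\omega_Y^{\otimes m})\cong\delta_{X*}\omega_X^{\otimes m}$, and then apply the HKR isomorphism \eqref{eq:HKR} separately to each side. The preliminary appeal to a ``functoriality of HKR under Fourier--Mukai transforms'' is superfluous (and stronger than what is needed), since your actual computation only applies $I_X$ and $I_Y$ to the two sides after evaluation, exactly as in the paper.
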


\begin{proof} From \eqref{orlov-classical} it follows that 
\begin{equation}\label{eq:orlov-classical2}
\Phi_{\E\boxtimes \mathrm{ad}\E}({\delta_Y}_*\omega_Y^{\otimes m}) \; \cong \; {\delta_X}_*\omega_X^{\otimes m} \quad \quad \forall \, m\in \mathbb Z
\end{equation} 
(note: in \eqref{orlov-classical} this is stated ``in the other direction", \emph{i.e.}   for the functor $D(X\times X)\rightarrow D(Y\times Y)$ but  the statement of \eqref{orlov-classical} recovers    also \eqref{eq:orlov-classical2}). 
Hence by plugging $\delta_{Y*} \omega_Y^{\otimes m}$  into Theorem \ref{thm:rel-direct-image} we get isomorphisms
\[
\mathbf R\,  {c_Y}_*\circ \mathbf L\delta_Y^*({\delta_Y}_*\omega_Y^{\otimes m}) \; \cong \; \widehat{{\varphi_\E}}^* \circ \mathbf R\, {c_X}_*\circ \mathbf L\delta_X^*({\delta_X}_*\omega_X^{\otimes m}) \quad \quad \forall \, m\in \mathbb Z.
\]
Finally, by invoking the HKR isomorphisms \eqref{eq:HKR} we get the corollary.  
\end{proof}

By taking   $k$-th cohomology in the isomorphisms of the previous corollary, we obtain the following 
result.
\begin{corollary}\label{cor:homology} 
For any $m$ and $k$ there are isomorphisms
\[
 \bigoplus_{p-q=k}\widehat{\varphi_\E}^* \big( R^{p}{c_X}_*(\Omega_X^q\otimes \omega_X^{\otimes m}) \big) \; \cong \; 
 \bigoplus_{p-q=k}R^{p}{c_Y}_*(\Omega_Y^q\otimes \omega_Y^{\otimes m}).
\]
In particular, there are isomorphisms 
\[
\widehat{\varphi_\E}^*({c_X}_* \omega_X^{\otimes m}) \; \cong \; {c_Y}_* \omega_Y^{\otimes m} \quad \quad \forall \, m\in \mathbb Z.
\]
\end{corollary}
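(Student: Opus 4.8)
The plan is to deduce Corollary~\ref{cor:homology} from Corollary~\ref{cor:hoch1} purely formally, by passing to cohomology sheaves --- this is precisely what the phrase ``by taking $k$-th cohomology'' points at. First I would note that $\widehat{\varphi_\E}$ is an isomorphism of abelian varieties, so the pullback $\widehat{\varphi_\E}^*$ is exact and therefore commutes both with the cohomology-sheaf functors $\mathcal H^k$ and with finite direct sums. Applying $\mathcal H^k$ to the isomorphism of Corollary~\ref{cor:hoch1} and using the identity $\mathcal H^k\bigl(\mathbf R\,{c_X}_*(\Omega_X^q\otimes\omega_X^{\otimes m})[q]\bigr)=R^{k+q}{c_X}_*(\Omega_X^q\otimes\omega_X^{\otimes m})$, the left-hand side becomes $\bigoplus_{q}\widehat{\varphi_\E}^*R^{k+q}{c_X}_*(\Omega_X^q\otimes\omega_X^{\otimes m})$; re-indexing by $p=k+q$ yields $\bigoplus_{p-q=k}\widehat{\varphi_\E}^*R^{p}{c_X}_*(\Omega_X^q\otimes\omega_X^{\otimes m})$, and the same computation on the right-hand side gives $\bigoplus_{p-q=k}R^{p}{c_Y}_*(\Omega_Y^q\otimes\omega_Y^{\otimes m})$. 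That is the first asserted isomorphism.

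For the ``in particular'' clause I would isolate a single summand by taking $k$ at the extreme of its admissible range. Set $n=\dim X=\dim Y$ (the dimension is a derived invariant). In the decomposition above the indices satisfy $p\ge 0$ and $0\le q\le n$, so for $k=-n$ the relation $p-q=-n$ forces the unique pair $(p,q)=(0,n)$, on both the $X$-side and the $Y$-side. Since $\Omega_X^n\cong\omega_X$ and $\Omega_Y^n\cong\omega_Y$, the $k=-n$ instance of the isomorphism above reads $\widehat{\varphi_\E}^*\bigl({c_X}_*\omega_X^{\otimes(m+1)}\bigr)\cong{c_Y}_*\omega_Y^{\otimes(m+1)}$; as $m$ runs over all of $\mathbb Z$, this is exactly the claimed $\widehat{\varphi_\E}^*\bigl({c_X}_*\omega_X^{\otimes m}\bigr)\cong{c_Y}_*\omega_Y^{\otimes m}$ for every $m\in\mathbb Z$. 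Alternatively one could bypass Corollary~\ref{cor:hoch1} and argue directly: apply Theorem~\ref{thm:rel-direct-image} to $\delta_{Y*}\omega_Y^{\otimes m}$, use \eqref{eq:orlov-classical2} and the HKR isomorphism \eqref{eq:HKR}, and read off the bottom cohomology sheaf $\mathcal H^{-n}$ on each side, which is ${c_X}_*\omega_X^{\otimes(m+1)}$, resp. ${c_Y}_*\omega_Y^{\otimes(m+1)}$ (the lowest cohomology sheaf of $\mathbf Rf_*$ of a complex in nonnegative-shifted degrees being $f_*$ of its lowest cohomology sheaf).

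I do not expect a genuine obstacle here: all the real content already sits in Corollary~\ref{cor:hoch1}, hence in Theorem~\ref{thm:rel-direct-image} together with the HKR decomposition. The only point to keep in mind is conceptual rather than computational: the equivalence respects only the \emph{single} grading by $k=p-q$ and not the finer bigrading by $(p,q)$, so one cannot in general split off an individual $R^{p}{c_X}_*(\Omega_X^q\otimes\omega_X^{\otimes m})$ from the direct sum; this becomes possible precisely at the extreme value $k=-n$, which is exactly what makes the ``in particular'' statement go through.
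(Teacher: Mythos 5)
Your proposal is correct and follows exactly the paper's (very terse) argument: apply $\mathcal H^k$ to Corollary \ref{cor:hoch1}, re-index by $p=k+q$, and obtain the ``in particular'' clause at the extreme value $k=-\dim X$, where the only surviving pair is $(p,q)=(0,\dim X)$ and the resulting twist by $\Omega_X^{\dim X}=\omega_X$ merely shifts $m$ by one, which is harmless since $m$ ranges over all of $\mathbb Z$. Your closing remark about the bigrading collapsing to the single grading by $p-q$ is exactly the right caveat.
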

\begin{proof} The last statement is obtained for $k=-\dim X$.
\end{proof}

In analogy with Variant \ref{var:variant1}, we have the following variant of Corollary \ref{cor:homology}. 

\begin{variant}\label{var:variant2} 
Let $\bar\alpha\in \Pic0 X$ be a point parametrizing an $R$-stable line bundle on $X$ \emph{(see \S\ref{R1})}.  
For all $m$ and $k$ there are isomorphisms
\[
\bigoplus_{p-q=k}\widehat{\varphi_\E}^* \big( R^{p}{c_X}_*(\Omega_X^q\otimes \omega_X^{\otimes m}\otimes P_{\bar\alpha}) \big)
\; \cong \; \bigoplus_{p-q=k}R^{p}{c_Y}_*(\Omega_Y^q\otimes \omega_Y^{\otimes m}\otimes Q_{\varphi_\E(\bar\alpha)}).
\]
In particular, we obtain isomorphisms
\[
\widehat{\varphi_\E}^* \big(
{c_X}_*(\omega_X^{\otimes m}\otimes P_{\bar\alpha}) \big) \; \cong \; {c_Y}_*(\omega_Y^{\otimes m}\otimes Q_{\varphi_\E(\bar\alpha)}) 
\quad \quad \forall \, m\in \mathbb Z. 
\]
\end{variant}


\subsection{The relative canonical ring. } \label{Hcan}
Given  a morphism of projective varieties $f \colon X\rightarrow A$ we denote by
\[
\mathcal R(f) \; := \; \bigoplus_{m\ge 0}f_* \omega_X^{\otimes m}
\]
  the associated relative canonical algebra. 
We will show  the derived invariance of the multiplicative structure of 
$\mathcal R(c_X)$
 (in this paper  we will not address the problem of the 
 derived invariance of the multiplicative structure of $\mathcal{HA}(c_X)$).

\begin{theorem}\label{thm:invmult} 
With the notation as in the previous setting there is an isomorphism of $\OO_{\Alb Z_Y}$-algebras
\[
\widehat{\varphi_\E}^*\mathcal R(c_X) \; \cong \;  \mathcal R(c_Y).
\]
\end{theorem}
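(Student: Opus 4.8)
The underlying graded $\OO_{\Alb Z_Y}$-module structure is already given by Corollary \ref{cor:homology}: the isomorphisms $\widehat{\varphi_\E}^*(c_{X*}\omega_X^{\otimes m})\cong c_{Y*}\omega_Y^{\otimes m}$ for all $m$ come from taking cohomology of the single isomorphism of functors of Theorem \ref{thm:rel-direct-image}, hence assemble into an isomorphism $\widehat{\varphi_\E}^*\mathcal R(c_X)\cong\mathcal R(c_Y)$ of graded modules; the content of the theorem is that this isomorphism respects the multiplications. The plan is to transport the question through the FMP transform, where it becomes a statement about convolution products of the transforms $\reallywidehat{(c_{X*}\omega_X^{\otimes m})^\vee}$, and then to pin those products down by invoking the derived invariance of the \emph{paracanonical} ring already proved in Theorem \ref{para:theorem} — this is the ``antecedent'' argument alluded to at the beginning of this section, now carried out relatively over $\Alb Z_X$.

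More precisely: by the generic vanishing theorems recalled in \S\ref{R4}, each $c_{X*}\omega_X^{\otimes m}$ is a $GV$-sheaf on $\Alb Z_X$, hence is recovered from its transform $\reallywidehat{(c_{X*}\omega_X^{\otimes m})^\vee}=\FM_{\Alb Z_X}(c_{X*}\omega_X^{\otimes m})$ by the inverse FMP equivalence, and the fibre of this transform over a point $\alpha\in\Pic0 Z_X$ is canonically $H^0(X,\omega_X^{\otimes m}\otimes P_{p_X^*\alpha})^\vee$, i.e. the dual of the graded piece of the paracanonical ring $P(X,\omega_X)$ indexed by the subgroup $p_X^*\Pic0 Z_X\subseteq\Pic0 X$. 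Under the FMP transform the $\OO_{\Alb Z_X}$-bilinear maps $c_{X*}\omega_X^{\otimes m}\otimes c_{X*}\omega_X^{\otimes k}\to c_{X*}\omega_X^{\otimes(m+k)}$ become convolution products on $\bigoplus_m\reallywidehat{(c_{X*}\omega_X^{\otimes m})^\vee}$, and on fibres over $\Pic0 Z_X$ these convolution products are, up to the standard dualities, dictated by the restriction to $\Pic0 Z_X$ of the paracanonical multiplication \eqref{eq:twisted}. Running the same discussion on the $Y$-side and recalling that \eqref{eq:rel-FM} (a consequence of Theorem \ref{thm:invpoi}) identifies the FMP transform of the isomorphism of Corollary \ref{cor:homology} with the transpose of the cohomological isomorphisms \eqref{eq:isoh0v0}, twisted by the Rouquier isomorphism — which carries $\Pic0 Z_X$ onto $\Pic0 Z_Y$ by \eqref{eq:PicZ} — Theorem \ref{para:theorem} tells us precisely that the two convolution products agree on all fibres over $\Pic0 Z_X$.

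It then remains to promote this fibrewise agreement to an equality of the two convolution products as honest morphisms of coherent sheaves on $\Pic0 Z_Y$; once this is done, applying the inverse FMP equivalence and summing over $m$ and $k$ turns the module isomorphism of Corollary \ref{cor:homology} into the asserted isomorphism of $\OO_{\Alb Z_Y}$-algebras, from which Orlov's invariance of the canonical ring is recovered by taking global sections. I expect the real work to lie exactly here, together with the careful identification of the multiplication of $\mathcal R(c_X)$ with the convolution product on the transforms (including the verification that, thanks to the $GV$ property, the relevant direct images and convolutions stay concentrated in a single degree). The passage from fibres to sheaves should be handled by a generic-flatness argument — two morphisms of coherent sheaves that agree at every closed point agree on the dense open where everything in sight is locally free — combined with a control of supports: the explicit form \eqref{eq:non-van2-Y} of the non-vanishing loci determines the supports of all the sheaves $\reallywidehat{(c_{Y*}\omega_Y^{\otimes j})^\vee}$ and of their convolutions, which is what prevents a nonzero discrepancy concentrated on the complementary locus. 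Apart from this point, the argument is bookkeeping layered on top of Theorems \ref{para:theorem}, \ref{thm:invpoi} and \ref{thm:rel-direct-image} and the FMP formalism of \S\ref{R4}.
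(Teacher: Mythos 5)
Your plan coincides with the paper's proof in all essentials: the graded module isomorphism comes from Corollary \ref{cor:homology}, the multiplication is transported by the FMP transform into a morphism $m_{\Pic0 Z_X}^*\reallywidehat{({c_X}_*\omega_X^{\otimes (k+n)})^\vee}\to \reallywidehat{({c_X}_*\omega_X^{\otimes k})^\vee}\boxtimes\reallywidehat{({c_X}_*\omega_X^{\otimes n})^\vee}$ whose fibres are the duals of the paracanonical multiplications \eqref{eq:twisted}, and Theorem \ref{para:theorem} gives the fibrewise agreement. Two caveats on the points you leave open. First, for the passage from fibres to sheaves, your mechanism (agreement on the dense locally free locus plus ``no discrepancy concentrated on the complement'') is not quite enough as stated: the transforms $\reallywidehat{({c_X}_*\omega_X^{\otimes h})^\vee}$ are in general \emph{not pure} --- by the Chen--Jiang decomposition they are direct sums of pieces supported on translated subtori of varying dimensions --- so a discrepancy living on a lower-dimensional component of the support is not excluded by density. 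What the paper uses instead is Nakayama's Lemma together with the fact (established in the Appendix via the torsion filtration) that the \emph{scheme-theoretic} supports of source and target are reduced; it is the fibrewise vanishing at every closed point combined with this reducedness that kills the discrepancy. Second, your argument relies throughout on the GV property of ${c_X}_*\omega_X^{\otimes m}$, which the generic vanishing theorems supply only for $m\ge 1$; the degree-zero multiplications ${c_X}_*\OO_X\otimes {c_X}_*\omega_X^{\otimes n}\to {c_X}_*\omega_X^{\otimes n}$ need a separate treatment, which the paper gives by noting that the ${c_X}_*\OO_X$-module structure is canonically induced by adjunction from the identity endomorphism and is therefore automatically compatible with the isomorphisms of Corollary \ref{cor:homology}.
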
 
\begin{proof} The isomorphism as graded $\OO_{\Alb Z_Y}$-modules follows from Corollary \ref{cor:homology}. What is left to prove is the multiplicativity of the isomorphism. This does not follow as easily as in the absolute case. Our argument will appeal to generic vanishing theorems of Hacon, Popa, Schnell and others, although we expect that there should be a self-contained argument. Specifically let 
\[
M_{k,n}(X) \colon ({c_X}_*\boxtimes {c_X}_*)( \omega_X^{\otimes k}\boxtimes \omega_X^{\otimes n})\rightarrow ({c_X}_*\boxtimes {c_X}_*)({\delta_X}_*\omega_X^{\otimes (k+n)}) \; \cong \; {\delta_{\Alb Z_X}}_* {c_X}_*\omega_X^{\otimes (k+n)}
\]
be the multiplication map (the map $\delta_{\Alb Z_X}$ is the diagonal embedding of $\Alb Z_X$). We need to prove that 
\begin{equation}\label{eq:multi}
(\widehat{\varphi_\E}^*\boxtimes\widehat{\varphi_\E}^*)(M_{k,n}(X)) \; \cong \; M_{k,n}(Y).
\end{equation}
 We apply the symmetric FMP transform $\FM_{\Alb Z_Y}$ (\S\ref{R4}) to \eqref{eq:multi}. Recall the usual commutation formulas between FMP transforms and morphisms of abelian varieties, say $f \colon A\rightarrow B$: 
\begin{equation}\label{eq:commutation}
\FM_B\circ \mathbf{R}f_* \; \cong \; {\bf L}\hat{f}^*\circ \FM_A,\qquad \FM_A\circ {\bf L}f^* \cong \mathbf{R}\hat{f}_*\circ \FM_B
\end{equation}
(\cite[Proposition 4.1]{schnell}). We have that
\[
\FM_{\Alb Z_Y}\circ \widehat{\varphi_\E}^*  \; \cong \; {\varphi_\E}_*\circ \FM_{\Alb Z_X}.
\]
Therefore \eqref{eq:multi} is equivalent to
\begin{equation}\label{eq:multi2}
{\varphi_\E}_*\bigl((\FM_{\Alb Z_X}\boxtimes \FM_{\Alb Z_X})(M_{k,n}(X)\bigr) \; \cong \; (\FM_{\Alb Z_Y}\boxtimes \FM_{\Alb Z_Y})(M_{k,n}(Y))\>.
\end{equation}
 Now  $(\FM_{\Alb Z_X}\boxtimes \FM_{\Alb Z_X})(M_{k,n}(X))$ coincides with  the following natural morphism in $D(\Pic0 Z_X\times\Pic0 Z_X)$:
\begin{equation}\label{eq:mor}
 (\FM_{\Alb Z_X}\boxtimes \FM_{\Alb Z_X})({\delta_{\Alb Z_X}}_* {c_X}_*\omega_X^{\otimes (k+n)} )\rightarrow \FM_{\Alb Z_X}({c_X}_*\omega_X^{\otimes k})\boxtimes\FM_{\Alb Z_X}({c_X}_*\omega_X^{\otimes n}) \>.
\end{equation}
By the generic vanishing theorems of Hacon and Popa-Schnell quoted in \S\ref{R3}, the complexes $\FM_{\Alb Z_X}({c_X}_*\omega_X^{\otimes h})$ are \emph{sheaves concentrated in degree $0$} for every  $h>0$, which we denote by  $\reallywidehat{({c_X}_*\omega_X^{\otimes h})^\vee}$. Therefore \eqref{eq:mor} is naturally identified to a morphism
\begin{equation}\label{eq:multi3}
m_{\Pic0 Z_X}^* \reallywidehat{({c_X}_*\omega_X^{\otimes (k+n)})^\vee} \longrightarrow \reallywidehat{({c_X}_*\omega_X^{\otimes k})^\vee}\boxtimes\reallywidehat{({c_X}_*\omega_X^{\otimes n})^\vee} \>,
\end{equation}
where $m_{\Pic0 Z_X} \colon \Pic0 Z_X\times \Pic0 Z_X\rightarrow \Pic0 Z_X$ is  the group law in $\Pic0 Z_X$, as by \eqref{eq:commutation} there is an isomorphism
\[ 
(\FM_{\Alb Z_X}\boxtimes \FM_{\Alb Z_X})\circ {\delta_{\Alb Z_X}}_* \; \cong \; m_{\Pic0 Z_X}^*\circ \FM_{\Alb Z_X}
\]
(the dual of the diagonal embedding of an abelian variety is the group law of the dual variety).  By base change and Serre duality,  
the fiber of the  map \eqref{eq:multi3} over any point $(\alpha, \alpha' )\in \Pic0 Z_X\times\Pic0 Z_X$  is a linear map
\[
H^0(\Alb Z_X, {c_X}_*\omega_X^{\otimes (k+n)}\otimes P_{\alpha+ \alpha' })^\vee\rightarrow H^0(\Alb Z_X, {c_X}_*\omega_X^{\otimes k}\otimes P_{\alpha})^\vee\otimes H^0(\Alb Z_X, {c_X}_*\omega_X^{\otimes n}\otimes P_{ \alpha' })^\vee
\]
\emph{i.e.}  
\[
H^0(X, \omega_X^{\otimes (k+n)}\otimes P_{\alpha+ \alpha' })^\vee\rightarrow H^0(X, \omega_X^{\otimes k}\otimes P_{\alpha})^\vee\otimes H^0(X, \omega_X^{\otimes n}\otimes P_{ \alpha' })^\vee .
\]
These maps are dual to the usual multiplication maps \eqref{eq:twisted}. The same holds for the fibers of the map $(\FM_{\Alb Z_Y}\boxtimes \FM_{\Alb Z_Y})(M_{k,n}(Y))$. 

Having said that, Theorem \ref{para:theorem} and its proof ensure that 
 \eqref{eq:multi2} holds \emph{at the fiber of every point $(\varphi_\E(\alpha),\varphi_\E( \alpha' ))\in \Pic0 Z_Y\times\Pic0 Z_Y$.}  By Nakayama's Lemma, this implies that  \eqref{eq:multi2} holds, since it is known that, for $k$ and $n$ positive, the scheme-theoretic support of both the source and  	 target of \eqref{eq:multi2} are the various components of the loci $-V^0( X, \omega_X^{\otimes h})$ (\emph{i.e.}   translates of abelian subvarieties), with their reduced scheme structure (see the Appendix  below).

In the case where either  $k$ or $n$ is zero the desired multiplicativity follows automatically because the map
\[ 
{c_X}_*\OO_X\otimes {c_X}_*\omega_X^{\otimes n}\rightarrow {c_X}_*\omega_X^{\otimes n}
\]
(\emph{i.e.}   the (${c_X}_*\OO_X$)-module structure of $ {c_X}_*\omega_X^{\otimes n}$) is induced canonically  from the identity $\mathrm{id}\in \mathrm{Hom}_{\OO_{\Alb Z_X}}({c_X}_*\omega_X^{\otimes n},{c_X}_*\omega_X^{\otimes n})$, via the following composition of  homomorphisms
\[
\xymatrix{ \mathrm{Hom}_{\OO_{\Alb Z_X}}({c_X}_*\omega_X^{\otimes n},{c_X}_*\omega_X^{\otimes n})\ar[r]^{\cong}&\mathrm{Hom}_{\OO_X}(c_X^*{c_X}_*\omega_X^{\otimes n},\omega_X^{\otimes n})\ar[d]\\
& \mathrm{Hom}_{\OO_{\Alb Z_X}}({c_X}_*c_X^*{c_X}_*\omega_X^{\otimes n}, {c_X}_*\omega_X^{\otimes n})\ar[d]^{\cong}\\&\mathrm{Hom}_{\OO_{\Alb Z_X}}({c_X}_*\OO_X\otimes {c_X}_*\omega_X^{\otimes n}\,,{c_X}_*\omega_X^{\otimes n})\>. }
\]
\end{proof}

\section{The relative canonical algebra of the Albanese morphism}\label{E0}

The aim of this section is to extend Theorem \ref{thm:invmult} to the Albanese morphism itself. The main difference with the  setting of  Theorem \ref{thm:invmult}  is that   Albanese varieties are  in general  not stable under the Rouquier isomorphism. A universally known   example of this
phenomenon is given by the Fourier-Mukai-Poincar\'e equivalence between an abelian variety $A$ and its dual $\Pic0 A$.  Its  Rouquier isomorphism exchanges $\Aut0 A$ (\emph{i.e.}   $A$ itself) with $\Pic0 A$. Therefore there are no non-trivial  $R$-stable subvarieties of $\Pic0 A$  in this case (see also \S\ref{ultima1} below). Therefore, in general, there is no hope to  dually recover from the Rouquier isomorphism any non trivial map between $\Alb Y$ and $\Alb X$, not even between quotients of them.

However it turns out that the relative canonical ring of the Albanese morphism is still preserved by the Rouquier isomorphism, although in a weaker form. This is the content of Theorem \ref{thmbasechange} below. Roughly speaking, 
we construct a finitely generated graded algebra  $\mathcal U_X$ of coherent sheaves on the Albanese-Iitaka variety
$\Alb Z_X$. Such algebra is preserved by the Rouquier isomorphism and its graded components carry  an essentially canonical direct-sum decomposition.  The relative canonical algebra $\mathcal R(a_X)$ 
is obtained from the algebra $p_X^*\mathcal U_X$ by twisting the pulbacks of the various summands by certain torsion line bundles in $\Pic0 X$. 


\subsection{Decomposition of direct images. }\label{R5} 
The main step towards the proof of 
 Theorem \ref{thmbasechange} is the following fact, already observed for $m=1$ by the first and third 
 author in \cite[Step 2 p.737]{capa}, and in \cite[Theorem D]{loposc} for  $m\geq 2$.  

\begin{proposition}\label{prop:simplified} 
Let $X$ be a smooth projective complex  variety and let 
$a_X\colon X\rightarrow \Alb X$ be its Albanese morphism. 
Moreover, let $c_X\colon X\rightarrow \Alb Z_X$ be the Albanese-Iitaka morphism and 
$p_X\colon\Alb X\rightarrow \Alb Z_X$  the natural quotient induced by  $c_X$.
\begin{itemize}
\item[(a)] For any $m \geq 1$ there is a  decomposition of the form
\begin{equation}\label{candec0}
{a_X}_* \omega_X^{\otimes m}  \; \cong  \;  \bigoplus_{j=1}^{N_X(m)}( {p_X}^* \G_{X,m, j} )\otimes P_{\alpha_{m,j}} \>,
\end{equation}
where:
\begin{itemize}
\item[(i)] the positive integer $N_X(m)$ is the one defined in \eqref{eq:non-van1} and \eqref{eq:non-van2};\\
\item[(ii)] the sheaves $\G_{X,m,j}$ are nonzero and $GV$  
 on $\Alb Z_X$  for all $m$ and $j$ \emph{(see \S \ref{R4})}; \\
\item[(iii)] the $\alpha_{m,j}$'s are torsion points in $\Pic0 X$ for all $m$ and $j$.  \\
\end{itemize}

\item[(b)] There are
isomorphisms
\begin{equation} {c_X}_*(\omega_X^{\otimes m}\otimes P_{\shortminus\alpha_{m,j}}) \; \cong \; \G_{X,m,j}\>
\end{equation}
for all $m$ and $j$.
\end{itemize}
\end{proposition}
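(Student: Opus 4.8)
The plan is to build the decomposition \eqref{candec0} from the Chen--Jiang decomposition of $a_{X*}\omega_X^{\otimes m}$, then collapse it along the fibers of $p_X$ using the description \eqref{eq:non-van2} of the non-vanishing loci. First I would recall the Chen--Jiang decomposition: by the results quoted in \S\ref{R4} (and their refinement due to Chen--Jiang \cite{cj}, extended by \cite{loposc}), for every $m\ge 1$ there is a direct-sum decomposition
\[
{a_X}_*\omega_X^{\otimes m} \; \cong \; \bigoplus_{k} \big( (q_k)^*\F_k\big)\otimes P_{\gamma_k}\>,
\]
where each $q_k\colon \Alb X\to B_k$ is a quotient onto an abelian variety, $\F_k$ is a nonzero $M$-regular (hence $GV$) sheaf on $B_k$, and $\gamma_k\in\Pic0 X$ is a torsion point. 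Pulling this back under the inclusion of Picard varieties, each summand $(q_k^*\F_k)\otimes P_{\gamma_k}$ contributes to $V^0(\Alb X,{a_X}_*\omega_X^{\otimes m})$ the torsion translate $\widehat{q_k}\,\Pic0 B_k - \gamma_k$, since $\F_k$ being $M$-regular has $V^0(B_k,\F_k)=\Pic0 B_k$ by \eqref{eq:non-van}. Comparing with \eqref{eq:non-van2}, which says that $V^0(\Alb X,{a_X}_*\omega_X^{\otimes m})$ is exactly the disjoint union of the $N_X(m)$ translates $p_X^*\Pic0 Z_X-\alpha_{m,j}$, I would conclude that each $\widehat{q_k}\,\Pic0 B_k$ is contained in $p_X^*\Pic0 Z_X$, i.e.\ each $q_k$ factors through $p_X$, and moreover (after grouping the summands $k$ according to which translate $j$ they land in, and absorbing the difference between $\gamma_k$ and $\alpha_{m,j}$, which lies in $p_X^*\Pic0 Z_X$, into the pulled-back factor) that the translates $-\alpha_{m,j}$ occurring are precisely the $N_X(m)$ distinct ones. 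Writing $q_k = r_k\circ p_X$ for a quotient $r_k\colon\Alb Z_X\to B_k$ and setting, for each $j$, $\G_{X,m,j}:=\bigoplus_{k\mapsto j}(r_k^*\F_k)\otimes(\text{the residual torsion twist in }\Pic0 Z_X)$, one obtains \eqref{candec0} with $\G_{X,m,j}$ a direct sum of pullbacks of $GV$ sheaves twisted by torsion points of $\Pic0 Z_X$, hence itself $GV$ on $\Alb Z_X$; this gives (a)(i)--(iii). The finite generation of the $\OO_{\Alb Z_X}$-algebra $\mathcal U_X=\bigoplus_m\bigoplus_j\G_{X,m,j}$ (needed for Theorem \ref{C}(a), though only the decomposition itself is asserted in the Proposition) would then follow from the finite generation of $\mathcal R(a_X)$ --- equivalently of $\mathcal R(c_X)$ --- via \cite{bchm}, together with the fact that the group $G_X/\Pic0 Z_X$ is finite.

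For part (b), I would argue as follows. Tensoring \eqref{candec0} by $P_{\shortminus\alpha_{m,j}}$ and pushing forward along $p_X$, the projection formula gives
\[
{c_X}_*(\omega_X^{\otimes m}\otimes P_{\shortminus\alpha_{m,j}}) \; = \; {p_X}_*\big({a_X}_*\omega_X^{\otimes m}\otimes P_{\shortminus\alpha_{m,j}}\big) \; \cong \; \bigoplus_{i=1}^{N_X(m)} \G_{X,m,i}\otimes {p_X}_*P_{\alpha_{m,i}-\alpha_{m,j}}\>,
\]
using that $P_{\alpha_{m,i}}\otimes P_{\shortminus\alpha_{m,j}}=P_{\alpha_{m,i}-\alpha_{m,j}}$ and that $P_{\alpha_{m,i}}=p_X^*P'_{\alpha_{m,i}}$ lies in $p_X^*\Pic0 Z_X$ only when $i=j$. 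The key point is the computation of ${p_X}_*P_\gamma$ for $\gamma\in\Pic0 X$: since $p_X$ is a fibration of abelian varieties with kernel an abelian subvariety (the dual of a quotient of $\Pic0 X$), $P_\gamma$ restricted to a fiber of $p_X$ is the topologically trivial line bundle corresponding to the image of $\gamma$ under $\Pic0 X\to\Pic0(\text{fiber})$; this image is zero exactly when $\gamma\in p_X^*\Pic0 Z_X$. Hence ${p_X}_*P_\gamma=0$ unless $\gamma\in p_X^*\Pic0 Z_X$, in which case $P_\gamma=p_X^*P'_\gamma$ and ${p_X}_*P_\gamma=P'_\gamma$ (a line bundle, since the fibers are connected and $\OO$ on them). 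Here the relevant $\gamma=\alpha_{m,i}-\alpha_{m,j}$ lies in $p_X^*\Pic0 Z_X$ precisely when $i=j$ (the translates are distinct), so only the $i=j$ term survives, and it survives as $\G_{X,m,j}\otimes\OO_{\Alb Z_X}=\G_{X,m,j}$ --- note that $\alpha_{m,j}-\alpha_{m,j}=0$, so the residual line bundle is trivial. This yields (b).

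The main obstacle I anticipate is \emph{uniqueness/canonicity} of the decomposition --- making precise in what sense the $\G_{X,m,j}$ and the torsion points $\alpha_{m,j}$ are ``essentially canonical'' (the phrase used in Theorem \ref{C}(a)), which is what one needs for the derived-invariance statement in Theorem \ref{C}(b) to even be well-posed. The torsion points $\alpha_{m,j}$ are forced, up to the translates being a fixed set: they are exactly the (negatives of the) distinguished points so that $p_X^*\Pic0 Z_X-\alpha_{m,j}$ are the components of $V^0(\Alb X,{a_X}_*\omega_X^{\otimes m})$, but each is only well-defined modulo $\Pic0 Z_X$; the ambiguity in the choice of representative $\alpha_{m,j}$ within its coset is then exactly compensated by a twist of $\G_{X,m,j}$ by the corresponding element of $\Pic0 Z_X$. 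Part (b) of the Proposition is precisely what pins this down: it shows $\G_{X,m,j}\cong{c_X}_*(\omega_X^{\otimes m}\otimes P_{\shortminus\alpha_{m,j}})$, so once the representative $\alpha_{m,j}$ is fixed the sheaf $\G_{X,m,j}$ is determined. I would therefore present the proof so that (b) is extracted along with (a) --- establishing both the decomposition and the intrinsic characterization of its pieces simultaneously --- rather than deducing (b) formally from (a) afterwards; this also sidesteps any worry about whether the grouping of Chen--Jiang summands is itself canonical. A secondary technical point is checking that the grouped sheaf $\G_{X,m,j}$ is genuinely $GV$ (not merely a sum of $GV$ sheaves supported on different subvarieties, which is automatically $GV$, but one should note that $GV$-ness is preserved under direct sums and under pullback by surjective morphisms of abelian varieties, and twisting by a point of $\Pic0$ is harmless) --- this is routine from the characterization of $GV$ via $\FM$ recalled in \S\ref{R4}.
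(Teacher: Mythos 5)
Your proposal is correct and follows essentially the same route as the paper: part (a) is obtained by taking the Chen--Jiang decomposition of ${a_X}_*\omega_X^{\otimes m}$, comparing the resulting description of $V^0(\Alb X,{a_X}_*\omega_X^{\otimes m})$ with \eqref{eq:non-van1}--\eqref{eq:non-van2} to conclude that every quotient $\rho_{m,i}$ factors through $p_X$, and grouping summands by the translate of $p_X^*\Pic0 Z_X$ containing their non-vanishing locus; part (b) is the same projection-formula computation, with the off-diagonal terms killed because $P_{\alpha_{m,i}-\alpha_{m,j}}$ restricts nontrivially to $\ker p_X$ for $i\neq j$. Your remarks on the uniqueness of the $\alpha_{m,j}$ modulo $\Pic0 Z_X$ match the paper's Remark \ref{rem:uniqueness}.
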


\begin{proof} Point (a) follows from the existence of the \emph{Chen-Jiang decomposition} for direct images of pluricanonical sheaves under morphisms to abelian varieties. This is due to  J. Chen and Z. Jiang in \cite{cj} for canonical sheaves and generically finite morphisms, and for arbitrary morphisms to abelian varieties to \cite{paposc} (another proof is found in \cite{villadsen}).  It was extended to pluricanonical sheaves in \cite{loposc}. 
In the case of the Albanese morphism, the Chen-Jiang decomposition  is as follows:
\begin{equation}\label{eq:CJ}
{a_X}_*\omega_X^{\otimes m} \; \cong \; \bigoplus_i \rho_{m,i}^* \mathcal{F}_{m,i} \otimes P_{\gamma_{m,i}}, 
\end{equation}
where:
\begin{itemize}
\item[(i)]  each $\rho_{m,i} \colon \Alb X \rightarrow B_{m,i}$ is a quotient morphism of abelian varieties with connected fibers;\\
\item[(ii)] each $\mathcal{F}_{m,i}$ is a nonzero $M$-regular sheaf  on $B_{m,i}$ (see \S\ref{R4});\\
\item[(iii)] each $\gamma_{m,i} \in \Pic0 X$ is a torsion point;\\
\item[(iv)]  $\rho_{m,i}^*\Pic0 B_{m,i}-\gamma_{m,i} \ne   \rho_{m,j}^*\Pic0 B_{m,j} -\gamma_{m,j}$\  for $i\ne j$.
\end{itemize}

These decompositions are essentially canonical (see \cite[Remark 3.5]{Pa2}), and they are discussed at length in the references above and also in  \cite{Pa2}. We also refer to \S9 below where it is shown that the Chen-Jiang decompositions essentially carry the same information of the torsion filtration of the FMP transform, which shows once again that they are essentially canonical. 
It follows that
\begin{equation}\label{v0dec}
V^0(\Alb X, {a_X}_*\omega_X^{\otimes m}) \; = \; 
\bigcup_i \rho_{m,i}^*V^0(B_{m, i}, \mathcal{F}_{m,i }\otimes P_{\gamma_{m,i}}) \; = \; 
\bigcup_i \big( {\rho}^*_{m,i} \big(\Pic0 B_{m,i} \big) \shortminus \gamma_{m, i} \big),
\end{equation}
where the last equality holds because the sheaves $\F_{m,i}$ are $M$-regular, hence \eqref{eq:non-van} holds for them. Note that, by condition (iv), the translates of abelian subvarieties appearing in the decomposition \eqref{v0dec} are different from each other  (but some of them can be strictly contained in others). By comparing \eqref{v0dec} with \eqref{eq:non-van1} in the case  $m=1$, and \eqref{eq:non-van2} otherwise, it follows that  $\rho_{m,i}^*\Pic0 B_{m,i}$ is contained in $p_X^*\Pic0 Z_X$ for all $m$ and $i$, and therefore all quotient morphisms $\rho_{m,i}$ factor through $p_X \colon \Alb X\rightarrow \Alb Z_X$. 

Next, we observe that, still comparing \eqref{v0dec} with \eqref{eq:non-van1} in the case $m=1,$ and  
with \eqref{eq:non-van2} in the case $m>1$,
 we can gather those sheaves  $\rho_{m,i}^* \mathcal{F}_{m,i} \otimes P_{\gamma_{m,i}}$ appearing in the Chen-Jiang decomposition 
 of ${a_X}_*\omega_X^{\otimes m}$ such that the corresponding loci $V^0( \Alb X ,\rho_{m, i}^* \mathcal{F}_{m,i} \otimes P_{\gamma_{m,i}})$ are contained in \emph{the same translate of} $p_X^* \Pic0 Z_X$. This produces the decomposition \eqref{candec0}.

 \noindent (b) Since $c_X=p_X\circ a_X$, by applying ${p_X}_*$ to the right hand side of \eqref{candec0} twisted by $P_{\shortminus\alpha_{m,j}}$, the projection formula yields 
 \[
 {c_X}_*(\omega_X^{\otimes m}\otimes P_{\shortminus\alpha_{m,j}}) \; \cong \; \Bigl( \bigoplus_{i\ne j}\G_{X, m,j}\otimes p_{X*}(P_{\alpha_{m,i}\shortminus\alpha_{m,j}})\Bigr)\oplus \G_{X,m,j}.
 \]
 The summands in the right-hand side vanish except for the last one, because 
 $H^0 \big( (P_{\alpha_{m,i}\shortminus\alpha_{m,j}})|_{\ker (p_X) } \big) = 0$ for $i\ne j$. This is because,  by construction, for $i\ne j$ the line bundle  $P_{\alpha_{m,i} \shortminus \alpha_{m,j}}$ does not 
 belong to $\Pic0 Z_X=\ker\bigl(\Pic0(\Alb X)\rightarrow \Pic0(\ker (p_X))\bigr)$ (recall from \S\ref{R3} that $\ker p_X$ is connected). 
  \end{proof}
  
  Note that, as mentioned in Subsection \ref{R3}, the decompositions (\ref{candec0}) are the sheaf-theoretic versions of the decompositions of non-vanishing loci (\ref{eq:non-van1}) and (\ref{eq:non-van2}). In fact
  \begin{equation}\label{eq:non-van3}
  V^0(\Alb X,{a_X}_*\omega_X) = \bigsqcup_{i=1}^{N_X(1)}V^0(\Alb X, ( {p_X}^* \G_{X,1, j} )\otimes P_{\alpha_{1,j}})\subseteq \bigsqcup_{i=1}^{N_X(1)} p_X^*\Pic0 Z_X-\alpha_{1,i}\>
  \end{equation}
  and, for $m\ge 2$,
  \begin{equation}\label{eq:non-van4}
  V^0(\Alb X,{a_X}_*\omega_X^{\otimes m})  = \bigsqcup_{i=1}^{N_X(m)} V^0(\Alb X,( {p_X}^* \G_{X,m, j} )\otimes P_{\alpha_{m,j}}) = \bigsqcup_{i=1}^{N_X(m)} p_X^*\Pic0 Z_X-\alpha_{m,i}\>.
  \end{equation}
  
  \begin{remark}\label{rem:uniqueness} The decompositions of Proposition \ref{prop:simplified} are unique in the following sense: for any $m>0$ the integer $N_X(m)$ (the number of translates of $p_X^*\Pic0 Z_X$ that contain at least one  component of 
  $V^0(\Alb X, {a_X}_*\omega_X^{\otimes m})$) is obviously unique. Since \eqref{eq:non-van1} and \eqref{eq:non-van2} 
  hold for the torsion points $\alpha_{m,j}$ of Proposition \ref{prop:simplified}, it follows that the $\alpha_{m,j}$'s are unique modulo torsion points in $p_X^*\Pic0 Z_X$. Consequently, by (b) of the proposition above, also the sheaves $\G_{X,m,j}$ are unique up to tensorization with torsion line bundles parametrized by $\Pic0 Z_X$. 
  \end{remark}
  
  \begin{remark}
For $m\geq 2$ and all $j$  actually
the sheaves $\G_{X , m ,j}$  satisfy a stronger vanishing condition  called   index theorem with index zero.
Namely we have that $H^p ( \Alb Z_X , \G_{X , m , j } \otimes P_{\alpha} ) = 0$ for all $p\geq 1$ and $\alpha \in \Pic0 Z_X$.
 \end{remark}
  
  \begin{remark}\label{rem:finite-gen} As mentioned in Subsection \ref{R3}, after (\ref{eq:non-van2}), the fact that $\Pic0 Z_X$ is a subgroup of finite index of the group $G_X$ defined there follows also from the general finite generation result  \cite[Theorem 1.2]{bchm}. 
 To see this, we first recall that $V^0(X,\omega_X^{\otimes m})=V^0(\Alb X,{a_X}_*\omega_X^{\otimes m})$.  Let us consider   a component \ $ p_X^*\Pic0 Z_X\,\text{-}\,\alpha_{m,i}$ \ of $V^0(\Alb X, a_{X*}\omega_X^{\otimes m})$ and a component \  $p_X^*\Pic0 Z_X\,\text{-}\,\alpha_{k,h}$ \ of  $V^0(\Alb X, a_{X*}\omega_X^{\otimes k})$. Via multiplication of global sections, their product must map to a component of the form $p_X^*\Pic0 Z_X\,\text{-}(\alpha_{m,i}+\alpha_{k,h})$. By (\ref{eq:non-van3}) and (\ref{eq:non-van4}) such components correspond to the sheaves   $({p_X}^* \G_{X,m, j} )\otimes P_{\alpha_{m,j}}$ and $({p_X}^* \G_{X,k, h} )\otimes P_{\alpha_{k,h}}$ appearing in the decompositions (\ref{candec0}). Therefore, in the multiplicative structure of the relative canonical algebra ${\mathcal R}(a_X)$, their tensor product  must map to   a sheaf $( {p_X}^* \G_{X,m+k, i} )\otimes P_{\alpha_{m+k,i}} $ in the  decomposition (\ref{candec0}) of ${a_X}_*\omega_X^{\otimes (m+k)}$. Hence we have that, modulo $\Pic0 Z_X$,
  \[
  \alpha_{m+k,i}=\alpha_{m,j}+\alpha_{k,h}\>.
  \]
   It follows that the torsion group  $G_X/\Pic0 Z_X$ is finitely generated, hence finite, as soon as there is a finite set $\mathcal X$ of sheaves $({p_X}^* \G_{X,m, j}) \otimes P_{\alpha_{m,j}}$ appearing in the decompositions (\ref{candec0}) such that, for any other sheaf $\mathcal H_{r,s}:=({p_X}^* \G_{X,r, s})\otimes P_{\alpha_{r,s}}$, there is a tensor product of sheaves in the set ${\mathcal X}$ mapping, via the multiplicative structure of ${\mathcal R}(a_X)$, to $\mathcal H_{r,s}$. But this follows from the decompositions (\ref{candec0}) and the finite generation of ${\mathcal R}(a_X)$. 
    \end{remark}

\subsection{Main theorem.}\label{basechangesec0}
\begin{setting/notation}\label{notation}
The precise statement of Theorem \ref{thmbasechange}  will involve the decomposition of Proposition \ref{prop:simplified}. Keeping the setting of the previous Remark, we choose a section $s$ of the quotient of abelian groups $G_X\rightarrow G_X/\Pic0 Z_X$, and we denote $H_X$ the  image of $s$.
This choice determines  consistent  representatives for the points $\alpha_{m,j}\in\Pic0 X$ appearing in the decompositions (\ref{candec0}) and therefore consistent representatives for the sheaves $\G_{X,m,j}$
 in such a way that the sheaf $\G_{X, m,j}\otimes \G_{X, k, h}$ naturally maps to a sheaf $\G_{X, m+k, i}$ appearing in the decomposition of ${a_X}_*\omega_X^{\otimes (m+k)}$. This defines  a finitely generated graded algebra 
\begin{equation}\label{eq:algebra}
\mathcal U_X = \bigoplus_{m\geq 0} \mathcal{U}_{X,m}
\end{equation}
where $\mathcal{U}_{X,0}:={c_X}_*\OO_X$ and
\[ \mathcal{U}_{X,m} \; := \; \bigoplus_{j=1}^{N_X(m)}\G_{X,m,j} \quad \mbox{ for }\quad  m\geq 1\>.
\]
\end{setting/notation} 

\begin{theorem} \label{thmbasechange}  
 Let  $\Phi_\E\colon D(X)\rightarrow D(Y)$ be an equivalence and let us consider  decompositions as in Proposition \ref{prop:simplified}: 
\[
{a_X}_*\omega_X^{\otimes m}\cong \bigoplus_{j=1}^{N_X(m)}( {p_X}^* \G_{X,m, j} )\otimes P_{\alpha_{m,j}} \qquad \mbox{and} \qquad {a_Y}_*\omega_Y^{\otimes m}\cong 
\bigoplus_{j=1}^{N_Y(m)}( {p_Y}^* \G_{Y,m, j} )\otimes Q_{\beta_{m,j}}. 
\]
Then: 
\begin{itemize}
\item[(1)] Given a subgroup $H_X$ of $ \Pic0 X$  as in Setting and Notation \ref{notation}, the subgroup $H_Y:=\varphi_\E(H_X)$ of $\Pic0 Y$ is the image of a section of $G_Y/\Pic0 Z_Y$, determining consistent representatives $\G_{Y,m,j}$ such that, up to reordering, for all $m\geq 1$ and $j$ we have
\[
\widehat{\varphi_\E}^*\G_{X,m,j} \;  \cong \; \G_{Y,m,j}
\]
\emph{(here, as usual,  $\widehat{\varphi_\E}\colon \Alb Z_Y\rightarrow \Alb  Z_X$ is the dual  isomorphism of the induced Rouquier isomorphism \eqref{eq:PicZ}, and $\varphi_\E\colon G_X\rightarrow G_Y$ is the   isomorphism \eqref{eq:G}).}
 In particular, $N_X(m)=N_Y(m)$.\\

\item[(2)] There is an isomorphism
\[
\widehat{\varphi_\E}^*\mathcal{U}_{X} \; \cong \; \mathcal \U_Y
\]
as graded algebras.
\end{itemize}
\end{theorem}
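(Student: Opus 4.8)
The plan is to deduce both parts from the twisted relative results already established: the module-level assertions will come from Variant \ref{var:variant2} together with Proposition \ref{prop:simplified}, while the multiplicativity will be obtained by running the Fourier--Mukai-plus-Nakayama mechanism of Theorem \ref{thm:invmult} in the presence of torsion twists, fed this time by the paracanonical ring invariance of Theorem \ref{para:theorem}. For part (1), recall that by \S\ref{R3} the Rouquier isomorphism already yields $N_X(m)=N_Y(m)$ (see \eqref{eq:N(m)}) and, after reordering, $\beta_{m,j}=\varphi_\E(\alpha_{m,j})$. Since $\varphi_\E\colon G_X\to G_Y$ of \eqref{eq:G} is a group isomorphism carrying $\Pic0 Z_X$ onto $\Pic0 Z_Y$ (see \eqref{eq:PicZ}), the image $H_Y:=\varphi_\E(H_X)$ of the section $H_X$ of Setting and Notation \ref{notation} is again a section of $G_Y\to G_Y/\Pic0 Z_Y$, and the representatives it singles out for the decomposition \eqref{candec0} on $Y$ are precisely $\beta_{m,j}=\varphi_\E(\alpha_{m,j})\in H_Y$. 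Now for each $m,j$ the point $-\alpha_{m,j}$ lies in $p_X^*\Pic0 Z_X-\alpha_{m,j}$, hence in $V^0(X,\omega_X^{\otimes m})$ (because $0\in p_X^*\Pic0 Z_X$), so $P_{-\alpha_{m,j}}$ is $R$-stable by the result recalled in \S\ref{R2}. Applying Variant \ref{var:variant2} with $\bar\alpha=-\alpha_{m,j}$, using the identification ${c_X}_*(\omega_X^{\otimes m}\otimes P_{-\alpha_{m,j}})\cong\G_{X,m,j}$ of Proposition \ref{prop:simplified}(b) on both sides, and noting $\varphi_\E(-\alpha_{m,j})=-\beta_{m,j}$, gives $\widehat{\varphi_\E}^*\G_{X,m,j}\cong\G_{Y,m,j}$, which is part (1).

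For part (2) I would introduce the finitely generated $\mathbb Z_{\ge0}\times H_X$-graded $\OO_{\Alb Z_X}$-algebra
\[
\mathcal R_{H_X}(c_X)\;:=\;\bigoplus_{m\ge0,\ \alpha\in H_X}{c_X}_*(\omega_X^{\otimes m}\otimes P_{\shortminus\alpha}),
\]
whose finite generation follows from that of $\mathcal R(a_X)$ since $H_X$ is finite, and similarly $\mathcal R_{H_Y}(c_Y)$. By Proposition \ref{prop:simplified}(b), $\mathcal U_X$ is the graded $\OO_{\Alb Z_X}$-subalgebra of $\mathcal R_{H_X}(c_X)$ obtained by retaining, in degree $m$, the summands indexed by $\alpha\in\{\alpha_{m,1},\dots,\alpha_{m,N_X(m)}\}$; these selections are closed under multiplication because, as in the argument of Remark \ref{rem:finite-gen}, a product of nonzero twisted pluricanonical section spaces is nonzero, which forces $\alpha_{m,j}+\alpha_{k,h}\equiv\alpha_{m+k,i}\pmod{\Pic0 Z_X}$ and hence, both sides lying in the section $H_X$, $\alpha_{m,j}+\alpha_{k,h}=\alpha_{m+k,i}$. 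Since part (1) matches the $(m,\alpha_{m,j})$-summand of $\mathcal R_{H_X}(c_X)$ with the $(m,\beta_{m,j})$-summand of $\mathcal R_{H_Y}(c_Y)$, it therefore suffices to prove
\[
\widehat{\varphi_\E}^*\mathcal R_{H_X}(c_X)\;\cong\;\mathcal R_{H_Y}(c_Y)
\]
as $\mathbb Z_{\ge0}\times(\text{group})$-graded $\OO_{\Alb Z_Y}$-algebras, the group isomorphism being $\varphi_\E|_{H_X}\colon H_X\to H_Y$; part (2) is the restriction of this to the subalgebras $\mathcal U_X\subseteq\mathcal R_{H_X}(c_X)$, $\mathcal U_Y\subseteq\mathcal R_{H_Y}(c_Y)$.

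This last isomorphism is proved exactly as in Theorem \ref{thm:invmult}. The graded-module part is Variant \ref{var:variant2} applied summand by summand, in place of Corollary \ref{cor:homology}. For multiplicativity one applies $\FM_{\Alb Z_X}\boxtimes\FM_{\Alb Z_X}$ to each multiplication map ${c_X}_*(\omega_X^{\otimes m}\otimes P_{\shortminus\alpha})\otimes{c_X}_*(\omega_X^{\otimes k}\otimes P_{\shortminus\alpha'})\to{c_X}_*(\omega_X^{\otimes(m+k)}\otimes P_{\shortminus\alpha-\alpha'})$; writing $\F_{m,\alpha}:={c_X}_*(\omega_X^{\otimes m}\otimes P_{\shortminus\alpha})$, by the generic vanishing of Proposition \ref{prop:simplified}(ii) the transforms $\FM_{\Alb Z_X}(\F_{m,\alpha})=\reallywidehat{\F_{m,\alpha}^\vee}$ are sheaves in degree $0$, so by \eqref{eq:commutation} this becomes a morphism $m_{\Pic0 Z_X}^*\reallywidehat{\F_{m+k,\alpha+\alpha'}^\vee}\to\reallywidehat{\F_{m,\alpha}^\vee}\boxtimes\reallywidehat{\F_{k,\alpha'}^\vee}$ whose fiber over $(\gamma,\gamma')$ is, by base change and Serre duality, the dual of the twisted multiplication map $H^0(X,\omega_X^{\otimes m}\otimes P_{\shortminus\alpha+p_X^*\gamma})\otimes H^0(X,\omega_X^{\otimes k}\otimes P_{\shortminus\alpha'+p_X^*\gamma'})\to H^0(X,\omega_X^{\otimes(m+k)}\otimes P_{\shortminus\alpha-\alpha'+p_X^*(\gamma+\gamma')})$ of \eqref{eq:twisted}, all twists lying in $G_X$. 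Since $\FM_{\Alb Z_Y}\circ\widehat{\varphi_\E}^*\cong{\varphi_\E}_*\circ\FM_{\Alb Z_X}$, comparing the $X$- and $Y$-pictures reduces to a $\varphi_\E$-reindexing of fibers, and Theorem \ref{para:theorem} (the twisted analogue, via \eqref{eq:isoh0v0}, of Orlov's canonical-ring statement) says precisely that these fiberwise dual multiplication maps correspond under $\varphi_\E$. For $m,k>0$ the source and target of the transformed maps are sheaves supported, with reduced scheme structure, on all of $\Pic0 Z_X\times\Pic0 Z_X$ (note $V^0(\Alb Z_X,\G_{X,m,j})=\Pic0 Z_X$; cf.\ \S\ref{rem:support}), so agreement on all fibers forces agreement of the maps by Nakayama's Lemma, exactly as in Theorem \ref{thm:invmult}; the degenerate cases $m=0$ or $k=0$ are handled as at the end of that proof.

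The main obstacle, as for Theorem \ref{thm:invmult}, is precisely this multiplicativity step: one must identify the multiplication after the FMP transform with the Serre-dual of the twisted multiplication on cohomology and control the scheme-theoretic supports of the relevant transforms so that a fiberwise comparison — supplied by Theorem \ref{para:theorem} — is enough to conclude via Nakayama. Everything else, namely the module isomorphisms and the bookkeeping of the section $H_Y=\varphi_\E(H_X)$ and the representatives $\beta_{m,j}=\varphi_\E(\alpha_{m,j})$, is a direct consequence of the already-established Variant \ref{var:variant2} and Proposition \ref{prop:simplified}.
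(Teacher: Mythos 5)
Your proof is correct and follows essentially the same route as the paper: part (1) via the $R$-stability of $H_X\subseteq G_X$ combined with Variant \ref{var:variant2} and Proposition \ref{prop:simplified}(b), and part (2) by rerunning the FMP/Nakayama argument of Theorem \ref{thm:invmult} on the torsion-twisted multiplication maps, with the paracanonical ring invariance supplying the fiberwise comparison. Two small imprecisions worth fixing: for $m=1$ the inclusion \eqref{eq:non-van1} is not an equality, so $-\alpha_{1,j}$ need not lie in $V^0(X,\omega_X)$ (its $R$-stability instead follows from $-\alpha_{1,j}\in H_X\subseteq G_X$), and likewise the transformed sheaves need not be supported on all of $\Pic0 Z_X\times \Pic0 Z_X$ when $m=1$ or $k=1$ --- the correct, and sufficient, statement is that their scheme-theoretic supports are reduced unions of translates of abelian subvarieties, as in the Appendix.
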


\begin{proof} (1)  By construction $H_X$  is a subgroup of the group $G_X$ 
hence it is $R$-stable. Therefore point  $(1)$ follows from  Variant \ref{var:variant2} and Proposition \ref{prop:simplified}(b). This provides an isomorphism  of graded $\OO_{\Alb Z_Y}$-modules $\widehat{\varphi_\E}^*\mathcal{U}_{X}\cong \mathcal \U_Y$. Note that the last assertion was already proved in \eqref{eq:N(m)}. 

\noindent To prove $(2)$ it remains to prove the invariance of the multiplicative structure. 
This is done exactly as in the proof of Theorem \ref{thm:invmult} by considering, rather than the multiplication maps $M_{k,n}(X)$, the multiplication
\[
({c_X}_*\boxtimes {c_X}_*) \big( (\omega_X^{\otimes k}\otimes P_{- \alpha_{k,j}})\boxtimes (\omega_X^{\otimes n}\otimes P_{ - \alpha_{n,h}}) \big)\rightarrow ({c_X}_*\boxtimes {c_X}_*) \big( {\delta_X}_*(\omega_X^{\otimes (k+n)}\otimes P_{ - \alpha_{k,j} - \alpha_{n,h}}) \big)
\]
and the correspondent multiplication of coherent sheaves on $\Alb Z_Y$.  (To this purpose we recall that the sheaves  ${c_X}_*(\omega_X^{\otimes m}\otimes P_{  \alpha_{m,j}}) $ are $GV$ as well for $m\geq 1$ and $j$, 
as it can be  seen by reducing to untwisted pluricanonical bundles on  the \'etale cover of $X$ induced by the torsion line bundle $P_{- \alpha_{m,j}}$). 
\end{proof}

It would be interesting to know whether a similar invariance holds true for relative structures, as in Corollary \ref{cor:hoch1}. 

\subsection{Derived invariance of the Chen-Jiang decomposition. } 
In answer to a question of Mihnea Popa, we show that not only 
the  direct images of pluricanonical sheaves under the Albanese map are derived invariant, 
 but also their Chen-Jiang decompositions are so. 

In the first place we claim that the sheaves   $\G_{X,m,j}$ have a Chen-Jiang decomposition (see the Appendix). This  because, for a torsion point $\alpha\in \Pic0 X$, the sheaf $c_{X*}(\omega_X^{\otimes m}\otimes P_\alpha)$  is a direct summand of the pushforward of the $m$-th pluricanonical sheaf of  the cyclic \'etale cover of $X$, say $X^\prime$ under the morphism $X^\prime \rightarrow \Alb Z_X$, and as such it has its own Chen-Jiang decomposition (this follows from \cite[Proposition 3.6]{loposc}). 
Therefore the claim follows from Proposition \ref{prop:simplified}(b). 

\begin{lemma}\label{prop:pop1} 
For all $m>0$ the isomorphisms $\widehat{\varphi_\E}^*\G_{X,m,j} \cong \G_{Y,m,j}$ of Theorem \ref{thmbasechange}(1) are such that $\widehat{\varphi_\E}^*$ transforms  a Chen-Jiang decomposition of  $\G_{X,m,j} $ to a Chen-Jiang decomposition of $\G_{Y,m,j}$. 
\end{lemma}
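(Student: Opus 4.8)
The plan is to verify, by a direct functoriality computation, that applying $\widehat{\varphi_\E}^*$ to a Chen--Jiang decomposition of $\G_{X,m,j}$ yields a decomposition of $\G_{Y,m,j}$ whose summands again satisfy the defining conditions (i)--(iv) recorded in the proof of Proposition \ref{prop:simplified}; since the assertion concerns existence, it suffices to treat one such decomposition. So first I would fix $m>0$ and $j$ and choose a Chen--Jiang decomposition
\[
\G_{X,m,j}\;\cong\;\bigoplus_{k}\tau_k^*\mathcal{F}_k\otimes P_{\gamma_k},
\]
with $\tau_k\colon\Alb Z_X\to B_k$ quotient morphisms of abelian varieties with connected fibers, $\mathcal{F}_k$ nonzero $M$-regular on $B_k$, $\gamma_k\in\Pic0 Z_X$ torsion, and the translates $\tau_k^*\Pic0 B_k-\gamma_k$ pairwise distinct (such a decomposition exists by the discussion preceding the lemma). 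Applying the exact functor $\widehat{\varphi_\E}^*$ and using the isomorphism $\widehat{\varphi_\E}^*\G_{X,m,j}\cong\G_{Y,m,j}$ of Theorem \ref{thmbasechange}(1), I obtain a direct-sum decomposition $\G_{Y,m,j}\cong\bigoplus_{k}\widehat{\varphi_\E}^*(\tau_k^*\mathcal{F}_k)\otimes\widehat{\varphi_\E}^*P_{\gamma_k}$, and the problem reduces to recognizing each summand.

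Next I would set $\tau_k^{Y}:=\tau_k\circ\widehat{\varphi_\E}\colon\Alb Z_Y\to B_k$. Since $\widehat{\varphi_\E}$ is an isomorphism of abelian varieties, $\tau_k^{Y}$ is again a quotient morphism with connected fibers, and $\widehat{\varphi_\E}^*(\tau_k^*\mathcal{F}_k)\cong(\tau_k^{Y})^*\mathcal{F}_k$ by functoriality; in particular the $M$-regularity of $\mathcal{F}_k$ (a property on $B_k$, which is left untouched) is unaffected. For the line-bundle factor I would recall that $\widehat{\varphi_\E}$ is the dual of $\varphi_\E\colon\Pic0 Z_X\xrightarrow{\sim}\Pic0 Z_Y$, so that dualizing back recovers $\varphi_\E$; hence $\widehat{\varphi_\E}^*P_{\gamma_k}\cong Q_{\varphi_\E(\gamma_k)}$, with $\varphi_\E(\gamma_k)$ torsion because $\varphi_\E$ is a group isomorphism. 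Finally, dualizing $\tau_k^{Y}=\tau_k\circ\widehat{\varphi_\E}$ identifies $(\tau_k^{Y})^*\Pic0 B_k$ with $\varphi_\E(\tau_k^*\Pic0 B_k)$ inside $\Pic0 Z_Y$, so that
\[
(\tau_k^{Y})^*\Pic0 B_k-\varphi_\E(\gamma_k)\;=\;\varphi_\E\bigl(\tau_k^*\Pic0 B_k-\gamma_k\bigr),
\]
and these translates remain pairwise distinct because $\varphi_\E$ is injective. This establishes (i)--(iv) on the $Y$-side, i.e.\ that we have produced a Chen--Jiang decomposition of $\G_{Y,m,j}$.

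The only genuinely delicate point, and hence the part I would write out with care, is the bookkeeping of the several dualities involved: confirming that $\widehat{\varphi_\E}^*$ acts on line bundles parametrized by $\Pic0 Z_X$ exactly through $\varphi_\E$, in a manner compatible with the normalizations of the Poincar\'e bundles $\cP_Z$ and $\Q_Z$ fixed in \S\ref{HNot} and with the origin of the isomorphism $\widehat{\varphi_\E}^*\G_{X,m,j}\cong\G_{Y,m,j}$ (Variant \ref{var:variant2} together with Proposition \ref{prop:simplified}(b)). Once this is pinned down, everything else is formal functoriality of pullback along isomorphisms of abelian varieties.
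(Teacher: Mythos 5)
Your proof is correct, but it takes a genuinely different route from the paper's. You verify directly that pullback along the isomorphism $\widehat{\varphi_\E}$ of abelian varieties preserves each of the defining conditions of a Chen--Jiang decomposition: the composite $\tau_k\circ\widehat{\varphi_\E}$ is again a quotient morphism with connected fibers, the $M$-regularity of $\F_k$ lives on $B_k$ and is untouched, $\widehat{\varphi_\E}^*P_{\gamma_k}\cong Q_{\varphi_\E(\gamma_k)}$ by biduality together with the normalization of the Poincar\'e bundles fixed in \S\ref{HNot}, and the pairwise distinctness of the translates is preserved because $\varphi_\E$ is injective; you rightly single out the duality bookkeeping as the only delicate point, and it checks out. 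The paper argues instead by applying the symmetric FMP transform to the isomorphism $\widehat{\varphi_\E}^*\G_{X,m,j}\cong\G_{Y,m,j}$ to get $\varphi_{\E*}\reallywidehat{\G_{X,m,j}^\vee}\cong\reallywidehat{\G_{Y,m,j}^\vee}$, noting that any isomorphism of sheaves preserves the torsion filtration, and then invoking the Appendix (\S\ref{rem:support}), where the Chen--Jiang decomposition is shown to be equivalent data to the torsion filtration of the FMP transform. Your argument is more elementary and self-contained: it needs only the existence of a Chen--Jiang decomposition of $\G_{X,m,j}$ (established just before the lemma) plus formal functoriality of pullback along isomorphisms. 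The paper's argument buys something slightly different: by identifying the decomposition intrinsically via the torsion filtration, it shows that \emph{any} isomorphism of the underlying sheaves automatically respects the canonical decompositions, which is the same mechanism as the ``essential uniqueness'' invoked right afterwards in the deduction of Proposition \ref{prop:pop}. Either route establishes the lemma and suffices for that application.
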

\begin{proof} We claim that, by applying the symmetric FMP transform to the isomorphisms of the statement, there are
 isomorphisms of sheaves 
\[
\varphi_{\E*}\reallywidehat{\G_{X,m,j}^\vee} \; \cong \; \reallywidehat{\G_{Y,m,j}^\vee}.
\] 
Indeed we know that such isomorphisms hold at the level of the usual FMP transforms (\emph{i.e.}   \eqref{eq:rel-FM}). 
 Hence  they also hold  for the 
 symmetric FMP transform: just apply the dualizing functor to both sides of \eqref{eq:rel-FM} and use the fact that, by Grothendieck duality, $\Delta_{\Pic0 Z_Y}\circ \Phi_{\Q_Y}\cong ((-1)^*\circ \Phi_{\Q_Y}\circ \Delta_{\Alb Z_Y})[ \dim \Pic0 Z_Y]$ (\cite[(3.8)]{mukai}; the details are left to the reader). 
Obviously such isomorphisms preserve the torsion filtrations  of \S \ref{rem:support}. Therefore, by the conclusion of  \S \ref{rem:support},
the isomorphisms of the statement preserve the Chen-Jiang decompositions.
\end{proof}

Next we observe that by its very definition, given a  Chen-Jiang decomposition
\[
\G_{X,m,j} \; \cong \; \bigoplus_i \rho_{m,j,i}^* \mathcal{F}_{m,j,i} \otimes P_{\gamma_{m,j,i}}, 
\]
then
\[
p_X^*\G_{X,m,j} \; \cong \;  \bigoplus_i (\rho_{m,j,i}\circ p_X)^* \mathcal{F}_{m,j,i} \otimes p_X^*P_{\gamma_{m,j,i}}, 
\]
is a Chen-Jiang decomposition. Moreover, from the definition of the decompositions of Proposition \ref{prop:simplified}, it follows that 
\begin{equation}\label{eq:new-CJ} {a_X}_*\omega_X^{\otimes m} \; \cong \; \bigoplus_{i,j} (\rho_{m,j,i}\circ p_X)^* \mathcal{F}_{m,j,i} \otimes p_X^*P_{\gamma_{m,j,i}} \otimes P_{\alpha_{m,j}} 
\end{equation}
is a Chen-Jiang decomposition. 

Let us place ourselves  in the setting of Theorem \ref{thmbasechange}(1). Recalling the essential uniqueness of Chen-Jiang decompositions, it follows from Lemma \ref{prop:pop1}  and (\ref{eq:new-CJ}) that Chen-Jiang decompositions of  
pushforwards of pluricanonical sheaves under the Albanese map  are derived invariants:
\begin{proposition}\label{prop:pop} For all $m \geq 1$
\[
{a_Y}_*\omega_Y^{\otimes m} \; \cong \; \bigoplus_{i,j}    \bigl( \rho_{m,j,i}\circ \widehat{\varphi_\E} \circ p_Y \bigr)^* \mathcal{F}_{m,j,i} \otimes p_Y^*Q_{\varphi_{\E} ( \gamma_{m,j,i}  ) }  \otimes Q_{\beta_{m,j}} 
\]
is a Chen-Jiang decomposition of  ${a_Y}_*\omega_Y^{\otimes m}$.
\end{proposition}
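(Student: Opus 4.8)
The plan is to obtain the asserted decomposition of ${a_Y}_*\omega_Y^{\otimes m}$ by feeding the Chen-Jiang decompositions of the sheaves $\G_{X,m,j}$ through the isomorphisms already established, and then to check that what comes out is again a Chen-Jiang decomposition. First I would invoke Proposition \ref{prop:simplified}(a) for $Y$, giving ${a_Y}_*\omega_Y^{\otimes m}\cong\bigoplus_{j=1}^{N_Y(m)}(p_Y^*\G_{Y,m,j})\otimes Q_{\beta_{m,j}}$, together with Theorem \ref{thmbasechange}(1), which identifies, after reordering, $\widehat{\varphi_\E}^*\G_{X,m,j}\cong\G_{Y,m,j}$, $\beta_{m,j}=\varphi_\E(\alpha_{m,j})$ and $N_X(m)=N_Y(m)$.

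Next, fixing a Chen-Jiang decomposition $\G_{X,m,j}\cong\bigoplus_i\rho_{m,j,i}^*\mathcal F_{m,j,i}\otimes P_{\gamma_{m,j,i}}$ (these exist, as noted just before the statement), I would apply Lemma \ref{prop:pop1}: since $\widehat{\varphi_\E}\colon\Alb Z_Y\to\Alb Z_X$ is an isomorphism, pulling back this decomposition produces the Chen-Jiang decomposition
\[
\G_{Y,m,j}\;\cong\;\bigoplus_i(\rho_{m,j,i}\circ\widehat{\varphi_\E})^*\mathcal F_{m,j,i}\otimes Q_{\varphi_\E(\gamma_{m,j,i})},
\]
where I use the normalization $(\widehat{\varphi_\E}^{-1}\times\varphi_\E)^*\Q_Z\cong\cP_Z$ of \S\ref{HNot} to rewrite $\widehat{\varphi_\E}^*P_{\gamma_{m,j,i}}$ as $Q_{\varphi_\E(\gamma_{m,j,i})}$, and the fact that the group isomorphism $\varphi_\E$ carries the torsion point $\gamma_{m,j,i}\in\Pic0 Z_X$ to a torsion point of $\Pic0 Z_Y$. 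Substituting this into the decomposition of ${a_Y}_*\omega_Y^{\otimes m}$ above, and using the compatibility of pullback with tensor products and compositions of morphisms, I would obtain exactly
\[
{a_Y}_*\omega_Y^{\otimes m}\;\cong\;\bigoplus_{i,j}(\rho_{m,j,i}\circ\widehat{\varphi_\E}\circ p_Y)^*\mathcal F_{m,j,i}\otimes p_Y^*Q_{\varphi_\E(\gamma_{m,j,i})}\otimes Q_{\beta_{m,j}},
\]
which is the claimed formula.

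It then remains to confirm that this is a genuine Chen-Jiang decomposition, i.e. that it satisfies conditions (i)--(iv) of Proposition \ref{prop:simplified}: the morphisms $\rho_{m,j,i}\circ\widehat{\varphi_\E}\circ p_Y$ are compositions of quotient morphisms of abelian varieties with connected fibers, hence again of that type; the $\mathcal F_{m,j,i}$ are the same $M$-regular sheaves as on the $X$-side; and the twists are torsion line bundles. This is the same verification already carried out for \eqref{eq:new-CJ} with $Y$ in place of $X$, so one can quote it directly; alternatively, and more cleanly, one notes that \eqref{eq:new-CJ} for $Y$ exhibits \emph{some} Chen-Jiang decomposition of ${a_Y}_*\omega_Y^{\otimes m}$, the display above has the same shape, and the essential uniqueness of Chen-Jiang decompositions then both certifies it as one and yields the asserted derived invariance.

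I expect the only genuinely delicate point to be condition (iv) — the pairwise distinctness of the translated abelian subvarieties $(\rho_{m,j,i}\circ\widehat{\varphi_\E}\circ p_Y)^*\Pic0 B_{m,j,i}$, shifted by the relevant torsion twists. I would settle it either by transporting the distinctness already present for \eqref{eq:new-CJ} along the isomorphism $\widehat{\varphi_\E}$ (using that $p_Y^*\colon\Pic0 Z_Y\hookrightarrow\Pic0(\Alb Y)$ is injective), or simply via the essential-uniqueness argument above, which is the route taken in the paragraph preceding the statement. Everything else is a routine unwinding of projection/pullback formulas.
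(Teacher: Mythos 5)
Your proposal is correct and follows essentially the same route as the paper: it combines Proposition \ref{prop:simplified}(a) for $Y$ with Theorem \ref{thmbasechange}(1), transports the Chen-Jiang decompositions of the $\G_{X,m,j}$ via Lemma \ref{prop:pop1}, and then certifies the resulting decomposition by the same mechanism used for \eqref{eq:new-CJ} together with the essential uniqueness of Chen-Jiang decompositions. The only difference is that you spell out details (the Poincar\'e-bundle normalization and condition (iv)) that the paper leaves implicit.
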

Notice that, in particular,  it follows  from the above Proposition that, for all $m>0$, the  quotient 
abelian varieties   of $\Alb X$ and $\Alb Y$  involved in the 
Chen-Jiang decompositions of ${a_X}_*\omega_X^{\otimes m}$ and  ${a_Y}_*\omega_Y^{\otimes m}$, respectively,  are isomorphic.

\subsection{Derived invariance of the canonical ring of the general fiber. } 
Starting from the present subsection, in the rest of the paper we will draw some applications of Theorem \ref{thmbasechange}. We begin with the derived 
 invariance of the canonical ring of the general fiber of the Albanese map.

Given a quasi-coherent sheaf $\F$ on a complex variety $Z$ and a closed point $z\in Z$, 
we denote by  $\F(z):=\F\otimes \mathbb C(z)$ the fiber of $\F$ at $z$ equipped with  
its structure of $\mathbb C$-vector space. Given a smooth projective variety $X$, we denote, as above, 
by $\mathcal R(a_X)$  the relative canonical algebra under the Albanese morphism. For $x\in X$, 
we let $z=a_X(x)$. We consider the fiber of $\mathcal R(a_X)$ at $z$:
\[
\mathcal R(a_X)(z)  \cong (\bigoplus_{m\ge 0}{a_X}_*\omega^{\otimes m}_X)(z)\> .
\]
 We denote by  $F_{X,z}$ the fiber of the Albanese map at $z$ (note that fibers may be disconnected, 
 in which case a general fiber is the disjoint union of general fibers of the Stein factorization). Let 
 \[
 R(F_{X,z})  \; = \;  \bigoplus_{m\ge 0}H^0(F_{X,z,}\, \omega_{F_{X,z}}^{\otimes m})
 \]
be the canonical ring of $F_{X,z}$, and let 
  \[
  (F_{X,z})_{can} \; = \; \Proj(R(F_{X,z}))\>
 \]
be the canonical model.

\begin{corollary}\label{cor:koddim} In the setting of  Theorem \ref{thmbasechange}, let $x\in X$ and $y\in Y$ be  such that $\widehat{\varphi_\E}(c_X(x))=c_Y(y)$. Let also $z=a_X(x)$ and  $t=a_Y(y)$. 
Then,
 for general $x\in X$,  one has that
   $$ R(F_{X,z}) \; \cong \; R(F_{Y,t}) $$  
as $\mathbb C$-algebras. 
In particular, $(F_{X, z})_{can}\cong (G_{Y, t})_{can}$. Hence the Kodaira dimension of the general fiber 
of the Albanese map is invariant under derived equivalence.
\end{corollary}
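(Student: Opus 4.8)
The plan is to identify the canonical ring of a general Albanese fibre of $X$ with a fibre of the algebra $\U_X$ of Theorem \ref{thmbasechange}, and then invoke the derived invariance of $\U_X$. Fix $x\in X$ general, put $z=a_X(x)$, $F=F_{X,z}=a_X^{-1}(z)$ and $\zeta=p_X(z)=c_X(x)$. Since $x$ is general, $a_X$ is flat near $z$ and $F$ is smooth; its normal bundle in $X$ is the pullback of the tangent space of $\Alb X$ at $z$, hence trivial, so adjunction gives $\omega_F^{\otimes m}\cong\omega_X^{\otimes m}|_F$. Over the flat locus, cohomology and base change holds and is compatible with the multiplications $\bigl({a_X}_*\omega_X^{\otimes m}\bigr)\otimes\bigl({a_X}_*\omega_X^{\otimes k}\bigr)\to{a_X}_*\omega_X^{\otimes(m+k)}$ (using the finite generation of the relative canonical algebra to reduce to finitely many degrees), so
\[
R(F_{X,z}) \; \cong \; \mathcal R(a_X)\otimes_{\OO_{\Alb X}}\mathbb C(z)
\]
as graded $\mathbb C$-algebras; if $F$ is disconnected, $R(F_{X,z})$ is the corresponding product of canonical rings and nothing changes.

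Next I would restrict the decomposition ${a_X}_*\omega_X^{\otimes m}\cong\bigoplus_j(p_X^*\G_{X,m,j})\otimes P_{\alpha_{m,j}}$ of Proposition \ref{prop:simplified}(a) to $z$, obtaining $\bigoplus_j\G_{X,m,j}(\zeta)\otimes P_{\alpha_{m,j}}(z)$. By the normalisation fixed in Setting and Notation \ref{notation}, the multiplication of $\mathcal R(a_X)$ carries the $(m,j)\otimes(k,h)$ summand into the $(m+k,i)$ summand (with $\alpha_{m,j}+\alpha_{k,h}=\alpha_{m+k,i}$ in the finite group $H_X$) as the tensor of the multiplication $\G_{X,m,j}\otimes\G_{X,k,h}\to\G_{X,m+k,i}$ of $\U_X$ with an isomorphism $P_{\alpha_{m,j}}\otimes P_{\alpha_{k,h}}\buildrel\sim\over\longrightarrow P_{\alpha_{m,j}+\alpha_{k,h}}$. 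At $z$ the $P_{\alpha_{m,j}}(z)$ are lines, and the associated normalised $2$-cocycle on $H_X$ with values in $\mathbb C^{\times}$ is symmetric (because $\mathcal R(a_X)$ is commutative), hence a coboundary, since $\Ext^1_{\ZZ}(H_X,\mathbb C^{\times})=0$; re-trivialising accordingly yields
\[
R(F_{X,z}) \; \cong \; \mathcal R(a_X)\otimes\mathbb C(z) \; \cong \; \U_X\otimes_{\OO_{\Alb Z_X}}\mathbb C\bigl(c_X(x)\bigr)
\]
as graded $\mathbb C$-algebras, and the same computation on $Y$ gives $R(F_{Y,t})\cong\U_Y\otimes\mathbb C(c_Y(y))$.

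Finally, Theorem \ref{thmbasechange}(2) provides an isomorphism $\widehat{\varphi_\E}^*\U_X\cong\U_Y$ of graded $\OO_{\Alb Z_Y}$-algebras; taking its fibre at $c_Y(y)$ and using that $c_X(x)$ and $c_Y(y)$ correspond under $\widehat{\varphi_\E}$ gives $\U_Y\otimes\mathbb C(c_Y(y))\cong\U_X\otimes\mathbb C(c_X(x))$ as graded $\mathbb C$-algebras. Chaining the three isomorphisms yields $R(F_{X,z})\cong R(F_{Y,t})$, whence $(F_{X,z})_{can}=\Proj R(F_{X,z})\cong\Proj R(F_{Y,t})=(F_{Y,t})_{can}$, and the Kodaira dimensions of the general fibres agree since they can be read off from these rings. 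For the genericity bookkeeping, the first two isomorphisms hold once $c_X(x)$ lies in a suitable dense open $U_X\subseteq\Alb Z_X$ and their $Y$-analogue once $c_Y(y)$ lies in a dense open $U_Y\subseteq\Alb Z_Y$; since $\widehat{\varphi_\E}$ is an isomorphism which, by Theorem \ref{A}(b), carries $c_Y(Y)$ onto $c_X(X)$, a partner $y$ exists for every $x$ and all conditions are met as soon as $c_X(x)\in U_X\cap\widehat{\varphi_\E}(U_Y)$, i.e. for general $x$. The one point requiring genuine care is the multiplicative compatibility of the trivialisations of the twisting bundles in the middle step; everything else is base change together with Theorem \ref{thmbasechange}.
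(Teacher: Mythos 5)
Your proof is correct and follows essentially the same route as the paper's: restrict the decomposition of Proposition \ref{prop:simplified} to the fibre, discard the one-dimensional fibres of the torsion line bundles, invoke Theorem \ref{thmbasechange}(2) to compare the fibres of $\U_X$ and $\U_Y$, and identify the fibre of the relative canonical ring with the canonical ring of the fibre by base change. The only difference is that you spell out two points the paper leaves implicit --- the cocycle argument showing the trivialisations of the lines $P_{\alpha_{m,j}}(z)$ can be chosen multiplicatively, and the genericity bookkeeping ensuring a partner $y$ exists via Theorem \ref{step1} --- both of which are handled correctly.
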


\begin{proof} It follows from Theorem \ref{thmbasechange} that, 
for $x\in X$ and $y\in Y$ as in the hypothesis, we have $\mathbb C$-algebra isomorphisms
\begin{eqnarray*}
\bigoplus_{m\ge 0}({a_X}_*\omega_X^{\otimes m})(z)&\cong &\bigoplus_{m\ge 0} \bigl(\bigoplus_{j=1}^{N_X(m)}( {p_X}^* \G_{X,m, j} )(z)\otimes P_{\alpha_{m,j}}  (z)\bigr)\\
&\cong & \bigoplus_{m\ge 0}(p_X^*(\bigoplus_{j=1}^{N_X(m)}( \G_{X,m, j} ) )(z)\\
& \cong & \bigoplus_{m\ge 0}(p_Y^*(\bigoplus_{j=1}^{N_Y(m)}( \G_{Y,m, j} ) )(t)\\
&\cong & \bigoplus_{m\ge 0}\bigl(\bigoplus_{j=1}^{N_Y(m)}( {p_Y}^* \G_{Y,m, j} )(t)\otimes Q_{\beta_{m,j}}(t)\bigr)\\
&\cong &\bigoplus_{m\ge 0}({a_Y}_*\omega_Y^{\otimes m})(t)
\end{eqnarray*}
where the third isomorphism is given by (2) of Theorem \ref{thmbasechange}.

The asserted isomorphism follows from this because, by base-change, the fiber at a general point of the relative canonical ring is canonically isomorphic to the canonical ring of the fiber.
\end{proof}


\section{The relative canonical model of the Albanese map}\label{E1}

In this section we apply the results of the previous section to study the invariance under derived equivalence of
\[
X^{can}_a \; := \; \Proj \big( \mathcal R(a_X) \big)  \longrightarrow \Alb X,
\] 
the relative canonical model of the Albanese morphism.
In the spirit of the recent work \cite{itoetal} of Ito-Miura-Okawa-Ueda we show that the relative Albanese canonical models of two derived-equivalent varieties have the same class in the Grothendieck ring modulo isogenies of abelian varieties. In the final subsection we draw an application to the derived invariance of the Hodge numbers $h^{0,j}$ for varieties relatively of general type with respect to the Albanese morphism.

 \subsection{Main result.}\label{Can1} 
We will consider the
 Grothendieck ring $K_0(\mathrm{Var}/{\mathbb{C}})$ of algebraic varieties over $\mathbb{C}$. 
 Actually, for our purposes,  it is  useful to take a certain quotient of $K_0(\mathrm{Var}/{\mathbb{C}})$ introduced in \cite{itoetal}: 
\begin{equation}\label{quoimou}
K'_0(\mathrm{Var}/{\mathbb{C}})
\end{equation}
 is the quotient of $K_0({\rm Var}/{\mathbb{C}})$ by the ideal generated by $\big[A\big] - \big[B\big]$, where $A$ and $B$ are isogenous abelian varieties.
Motivated by a result of Orlov asserting that derived equivalent abelian varieties are isogenous,
 the
 authors  of \cite{itoetal}
ask whether derived equivalent varieties have the same class in the localized Grothendieck ring $K'_0(\mathrm{Var}/{\mathbb{C}})[\mathbb{L}^{-1}]$, with $\mathbb{L}:=\big[\mathbb{A}^1\big]$ be the class of the affine line (\emph{op.cit.}, Problem 7.2).
We prove that in any case -- even without localizing --  the relative Albanese canonical models of derived equivalent varieties do: 
 		\begin{theorem}\label{thmGroclass}
Let $X$ and $Y$ be derived equivalent varieties.
	Then 	the equality
\begin{equation}\label{eqprime}
\big[X^{can}_a\big] = \big[Y^{can}_a\big] 
\end{equation}
holds in $K'_0(\mathrm{Var}/{\mathbb{C}})$.
		\end{theorem}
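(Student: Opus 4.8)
The plan is to exploit the structural description of the Albanese relative canonical algebra provided by Theorem \ref{thmbasechange}, which expresses $\mathcal R(a_X)$ as a ``twisted pullback'' of the derived-invariant algebra $\mathcal U_X$ on $\Alb Z_X$. First I would observe that the formation of $X^{can}_a = \Proj(\mathcal R(a_X))$ only sees the sheaf of algebras $\mathcal R(a_X)$ up to the twists by the torsion line bundles $P_{\alpha_{m,j}}$: the decomposition \eqref{candec0} realizes ${a_X}_*\omega_X^{\otimes m}$ as $p_X^*\mathcal U_{X,m}$ twisted, component by component, by torsion points in the finite group $S_X\subseteq\Pic0 X$ (the image of the section, call it $H_X$, together with $\Pic0 Z_X$). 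Passing to the associated étale cover of $\Alb X$ trivializing $S_X$, or more precisely building the variety $\Proj$ of the $S_X$-graded algebra $p_X^*\mathcal U_X$ twisted by characters, one sees that $X^{can}_a$ is obtained from $\Proj(\mathcal U_X) \times_{\Alb Z_X}\Alb X$ by a construction that depends only on the data $(\mathcal U_X,\, S_X,\, \text{the assignment }(m,j)\mapsto\alpha_{m,j},\, p_X)$.

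The second step is to make this precise as an identity of motivic classes. The key point is that in $K_0'(\mathrm{Var}/\mathbb C)$ the class of the relative $\Proj$ of the twisted algebra can be computed fiberwise over $\Alb Z_X$, and the twisting by a torsion line bundle in $\Pic0 Z_X$ (or more generally in the finite group $S_X$) does not change the isomorphism type after an appropriate étale base change that, at the level of classes, is absorbed because the relevant abelian varieties get replaced by isogenous ones — which are identified in $K_0'$. Concretely I would write $[X^{can}_a]$ as an expression involving $[\Proj(\mathcal U_X)]$, the class of $\Alb X$ (equivalently of the isogeny factor $\ker(p_X)\times\Alb Z_X$ up to isogeny), and combinatorial data $N_X(m)$; Theorem \ref{thmbasechange}(2) gives $\widehat{\varphi_\E}^*\mathcal U_X\cong\mathcal U_Y$ as graded algebras, hence $\Proj(\mathcal U_X)\cong\Proj(\mathcal U_Y)$ over the identification $\widehat{\varphi_\E}\colon\Alb Z_Y\xrightarrow{\sim}\Alb Z_X$, while Theorem \ref{A} identifies the base $\Alb Z_X$ with $\Alb Z_Y$, Theorem \ref{C}(b) matches the $\alpha_{m,j}$ with the $\beta_{m,j}$ under $\varphi_\E$ and gives $N_X(m)=N_Y(m)$, and finally $\Alb X$ and $\Alb Y$ — though possibly non-isomorphic — are isogenous by \cite[Theorem A]{ps1}, so their classes agree in $K_0'$. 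Assembling these identifications term by term yields $[X^{can}_a]=[Y^{can}_a]$.

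The main obstacle, I expect, is the first step: showing cleanly that $[X^{can}_a]$ really is a function of the invariant data and, in particular, that the torsion twists $P_{\alpha_{m,j}}$ contribute nothing beyond the combinatorics $(m,j)\mapsto[\alpha_{m,j}]$ matched by $\varphi_\E$. The subtlety is that $\Proj$ of a twisted algebra is not literally a fiber product, so one needs to argue — perhaps by stratifying $\Alb Z_X$ (resp.\ $\Alb X$) so that over each stratum the sheaves $\G_{X,m,j}$ become trivialized, or by descent along the étale cover $\widetilde{\Alb X}\to\Alb X$ that kills $S_X$ — that the scissor relations in $K_0'$ let one replace the twisted object by the untwisted one, paying only with isogenous abelian varieties. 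A secondary technical point is that $\mathcal R(a_X)$ need not be generated in degree one, so $\Proj$ is of a non-standard graded algebra; but since we work in the Grothendieck ring this causes no trouble once the Veronese/normalization issues are handled uniformly on both sides, and they are handled uniformly precisely because the graded-algebra isomorphism $\widehat{\varphi_\E}^*\mathcal U_X\cong\mathcal U_Y$ respects the grading. I would therefore structure the proof so that all delicate manipulations with $\Proj$ and twists are performed symmetrically on $X$ and $Y$, invoking Theorem \ref{thmbasechange} only at the very end to conclude.
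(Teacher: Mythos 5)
Your overall strategy is the paper's own: replace $\mathcal R(a_X)$ by the untwisted algebra $p_X^*\mathcal U_X$, identify $\Proj\big(p_X^*\mathcal U_X\big)$ with $\Proj\big(\mathcal U_X\big)\times_{\Alb Z_X}\Alb X$ by functoriality of $\Proj$, apply the fibration formula in the Grothendieck ring, and match the factors using Theorem \ref{thmbasechange}(2). Two steps of your sketch, however, need repair. For the comparison of $\Proj\big(\mathcal R(a_X)\big)$ with $\Proj\big(p_X^*\mathcal U_X\big)$, the \'etale-cover route you float does not compute classes: for an \'etale cover $\widetilde Z\to Z$ of degree $d$ one does \emph{not} have $[\widetilde Z]=d[Z]$ in $K_0(\mathrm{Var}/\mathbb{C})$, and descent data are invisible to the Grothendieck ring. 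The correct implementation is the other option you mention: since only finitely many torsion line bundles $P_{\alpha_{m,j}}$ occur, choose a finite open cover of $\Alb X$ trivializing all of them compatibly with the semigroup structure; the two $\Proj$'s are then piecewise isomorphic, hence have equal classes already in $K_0(\mathrm{Var}/\mathbb{C})$ --- this is exactly what the paper does, and no Veronese or generation-in-degree-one issues arise.

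The genuine gap is in the ``term by term'' matching. The fibration formula gives $[X^{can}_a]=[\ker(p_X)]\cdot[\Proj\big(\mathcal U_X\big)]$, so the factor to be matched is $[\ker(p_X)]$, not $[\Alb X]$. Knowing $[\Alb X]=[\Alb Y]$ and $[\Alb Z_X]=[\Alb Z_Y]$ in $K'_0(\mathrm{Var}/\mathbb{C})$ does not formally yield $[\ker(p_X)]=[\ker(p_Y)]$: one cannot cancel $[\Alb Z_X]$ in this ring. One must prove that $\ker(p_X)$ and $\ker(p_Y)$ are \emph{isogenous as abelian varieties}, which follows from the Popa--Schnell isogeny $\Alb X\sim\Alb Y$, the isomorphism $\Alb Z_X\cong\Alb Z_Y$, Poincar\'e complete reducibility, and Fujita's cancellation theorem. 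This is precisely Claim \ref{claim} in the paper and is the one missing ingredient in your argument; once it is supplied, your proof closes.
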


\begin{proof}
We go back to the setting of \S \ref{E0}.
Let $\mathcal{U}_{X}$ be the algebra of sheaves over $\Alb Z_X$ introduced in \S\ref{basechangesec0}. 
It is a derived invariant by Theorem \ref{thmbasechange}(2). 
In particular, the structure map $\Proj\bigl(\mathcal{U}_{X}\bigr)\rightarrow \Alb Z_X$ is a derived invariant.
First, we claim that the equality
\begin{equation}\label{relcan1}
\big[\Proj \bigl(\mathcal R(a_X) \bigr)] = \big[\Proj\bigl(p_X^*\mathcal U_X \bigr)\big]
\end{equation} 
holds in the  Grothendieck ring of varieties $K_0(\mathrm{Var}/{\mathbb{C}})$. 
To prove this, recall that the two algebras $\mathcal R(a_X)$ and $p_X^*\mathcal U_X$
only differ for the twisting by the torsion line bundles appearing in the finite semigroup $\{P_{\alpha_{m, j}}\}_{m \geq 1, j}$ (Theorem \ref{thmbasechange}). 
By choosing a finite open covering of the quasi-compact variety $\Alb X$ given by a common local trivialization of the  $P_{\alpha_{m, j}}$'s,
we see that there exist finite decompositions
\[
\Proj \big( \mathcal R(a_X)) = \bigsqcup_{i=1}^s V_i \quad \quad \textrm{and} \quad \quad 
\Proj \big(p_X^*\mathcal U_X\big)  = \bigsqcup_{i=1}^s W_i,
\]
 where $V_1, \ldots , V_s, W_1, \ldots , W_s$ are locally closed subvarieties, and $V_i$ is isomorphic to $W_i$ for all $i= 1, \ldots , s$. 
Namely,  $\Proj \bigl( \mathcal R(a_X))$ and  $\Proj\bigl(p_X^*\mathcal U_X)$  are \emph{piecewise isomorphic}. Therefore, 
\eqref{relcan1}
holds in $K_0(\mathrm{Var}/\mathbb{C})$ by definition. 

Note that the same happens for $Y$. Now, thanks to the functoriality of the  $\Proj$ (\cite[Prop.\ 3.5.3]{egaII}), we have that
\[\Proj\bigl(p_X^* \U_{X} \bigr)\simeq \Proj\bigl( \U_{X}\bigr)\times_{\Alb Z_X} \Alb X,
\]
and similarly for $Y$.
 This is expressed by the cartesian  diagrams
\begin{equation}\label{cartHodge}
\xymatrix{
\Proj \big( p_X^* \U_{X} \big)    \ar[r] \ar[d] & \Alb X \ar[d]^{p_X}  \\
\Proj \big(  \U_{X} \big)    \ar[r] & \Alb Z_X }
\quad \quad \quad \quad \xymatrix{
\Proj \big( p_Y^*  \U_{Y} \big)  \ar[r] \ar[d] & \Alb Y   \ar[d]^{p_Y}  \\
\Proj \big(  \U_{Y} \big)    \ar[r] & \Alb Z_Y  }
\end{equation}
Since $p_X$ is a surjective morphism of abelian varieties with connected fibers, it follows from a standard property on the behaviour of Grothendieck rings (see, e.g., \cite[Propositions 2.3.3 and 2.3.4]{motivicint}) that 
\begin{equation}\label{gro2}
\big[\Proj \big( p_X^* \U_{X} \big) \big] =\big[\ker (p_{X})\big]\big[\Proj \big(  \cU_{X} \big)\big] 
\end{equation}
in $K_0(\mathrm{Var}/\mathbb{C})$,
and the same  holds for $Y$. 

\begin{claim}\label{claim} The abelian varieties $\ker(p_{X})$ and $\ker(p_{Y})$ are isogenous.
\end{claim} The Theorem follows from this since $\Proj \big( \cU_{X} \big)$ and $\Proj \big( \cU_{Y} \big)$ are isomorphic by Theorem  \ref{thmbasechange}(2). 

\begin{proof}[Proof of  Claim \ref{claim}]
By a result of Popa and Schnell (\cite[Theorem A (1)]{ps1}) the abelian varieties $\Alb X$ and $\Alb Y$ are isogenous. Therefore, by Poincar\'e's complete reducibility $\ker p_X\times\Alb Z_X$ is isogenous to $\ker p_Y\times\Alb Z_Y$, and we know that $\Alb Z_X$ and $\Alb Z_Y$ are isomorphic.  This implies that $\ker p_X$ and $\ker p_Y$ are isogenous by the cancellation theorem of Fujita (\cite[Proposition 9]{fujita}). 
\end{proof}
\end{proof}

\subsection{Derived invariance of Hodge numbers $h^{0, j}$}\label{final}

	As observed in \cite{itoetal}, the usefulness of the quotient ring $K'_0(\mathrm{Var}/\mathbb{C})$ is that the Hodge-Deligne polynomial factors through it. Namely,  
let 
\[
{\rm HD} \colon K_0(\mathrm{Var}/{\mathbb{C}}) \rightarrow \mathbb{Z}[u, v]
\]
be the ring homomorphism that associates to each class in $K_0(\mathrm{Var}/{\mathbb{C}})$ its Hodge-Deligne polynomial.  
For the class of a smooth and projective variety $Z$, one has 
\begin{equation}\label{smHD}
{\rm HD} (\big[Z\big]) = \sum_{i, j \geq 0} (-1)^{i+j} h^{i, j}(Z)u^iv^j. 
\end{equation}
Since isogenous abelian varieties have the same Hodge numbers,  $\rm{HD}$ passes to the quotient and we obtain an induced ring homomorphism
\[
{\rm HD}' \colon K'_0(\mathrm{Var}/{\mathbb{C}}) \rightarrow \mathbb{Z}[u, v]
\]
 such that \eqref{smHD} continues to hold. 
	\begin{theorem}\label{invHN1}
Let $X$ and $Y$ be derived equivalent varieties. Assume that the generic fiber of the Albanese morphism of $X$ is of general type, that is $\omega_X$ is $a_X$-big.  Then we have
\[
h^{0, j}(X) \; = \;  h^{0, j}(Y),
\]
for all $j$.
\end{theorem}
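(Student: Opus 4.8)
The plan is to recover the numbers $h^{0,j}(X)=\dim H^j(X,\OO_X)$, which form a birational invariant of smooth projective varieties (because $\mathbf R f_*\OO=\OO$ for any birational morphism of smooth projective varieties), from the motivic class $[X^{can}_a]\in K'_0(\mathrm{Var}/{\mathbb C})$ — a derived invariant by Theorem \ref{thmGroclass} — by specializing the Hodge--Deligne polynomial at $u=0$. Two ingredients are needed: first, that under the hypothesis that $\omega_X$ is $a_X$-big the relative canonical model $X^{can}_a$ is birational to $X$ and has rational singularities; and second, a Hodge-theoretic lemma asserting that for a proper variety with rational singularities the $u=0$ part of the Hodge--Deligne polynomial records exactly the Hodge numbers $h^{0,\bullet}$ of a resolution.

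First I would establish the two geometric facts about $X^{can}_a$. Since $\omega_X$ is $a_X$-big, the relative canonical algebra $\mathcal R(a_X)$ is a finitely generated sheaf of $\OO_{\Alb X}$-algebras (\cite{bchm}), and $X^{can}_a=\Proj\mathcal R(a_X)\to\Alb X$ is the relative canonical model; being the canonical model of the smooth variety $X$ over $\Alb X$, it has canonical — hence rational — singularities. Over the generic point of $V:=a_X(X)$ the fibre of $X^{can}_a$ is the canonical model of the fibre of $a_X$ over that point, which is of general type by $a_X$-bigness, hence birational to that fibre; since $X$ and $X^{can}_a$ both dominate $V$ with birational generic fibres, $X$ is birational to $X^{can}_a$, and similarly $Y$ is birational to $Y^{can}_a$. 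In particular a smooth projective resolution $W_X\to X^{can}_a$ is simultaneously a smooth birational model of $X$, so that $h^{0,j}(W_X)=h^{0,j}(X)$ for all $j$, and likewise for $Y$.

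The heart of the argument is the following lemma: if $V$ is a proper complex variety with rational singularities and $\rho\colon\widetilde V\to V$ is a resolution of singularities, then
\[
\mathrm{HD}(V)(0,v)\;=\;\sum_{j}(-1)^j\,h^{0,j}(\widetilde V)\,v^j .
\]
To prove it I would start from the fact that the coefficient of $u^0v^j$ in $\mathrm{HD}(V)$ is $\sum_k(-1)^k\dim\mathrm{Gr}^0_F\mathrm{Gr}^W_j H^k(V,{\mathbb C})$. For rational (hence Du Bois) singularities one has $\mathrm{Gr}^0_F H^k(V,{\mathbb C})\cong H^k(V,\OO_V)$, and the natural map $H^k(V,\OO_V)\to H^k(\widetilde V,\OO_{\widetilde V})$ is an isomorphism. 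Invoking strictness of the morphism of mixed Hodge structures $\rho^*\colon H^k(V,{\mathbb C})\to H^k(\widetilde V,{\mathbb C})$ together with the purity of $H^k(\widetilde V,{\mathbb C})$, one sees that $W_{k-1}H^k(V,{\mathbb C})$ is contained in the kernel of the projection onto $\mathrm{Gr}^0_F H^k(V,{\mathbb C})$ (compatibility of $\rho^*$ with the $\OO$-cohomology, which is injective there, forces this); hence $\mathrm{Gr}^0_F\mathrm{Gr}^W_j H^k(V,{\mathbb C})=0$ for $j\neq k$ and is isomorphic to $H^k(\widetilde V,\OO_{\widetilde V})$, of dimension $h^{0,k}(\widetilde V)$, for $j=k$; summing gives the lemma.

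To finish, I would apply the ring homomorphism $K'_0(\mathrm{Var}/{\mathbb C})\to{\mathbb Z}[v]$, $[Z]\mapsto\mathrm{HD}'([Z])(0,v)$ — well defined because the Hodge--Deligne polynomial factors through $K'_0(\mathrm{Var}/{\mathbb C})$ — to the equality $[X^{can}_a]=[Y^{can}_a]$ of Theorem \ref{thmGroclass}. Combining this with the lemma (applied to $X^{can}_a$ and $Y^{can}_a$, which have rational singularities) and the birationality of the previous paragraph, one gets $\sum_j(-1)^j h^{0,j}(X)\,v^j=\sum_j(-1)^j h^{0,j}(Y)\,v^j$, and comparing coefficients yields $h^{0,j}(X)=h^{0,j}(Y)$ for all $j$. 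The hard part will be the Hodge-theoretic lemma: one must control the weight filtration on $\mathrm{Gr}^0_F H^k(V,{\mathbb C})$ for a variety with rational singularities so that the $u=0$ specialization of the Hodge--Deligne polynomial isolates the individual $h^{0,j}$ and not merely their alternating sum $\chi(\OO_V)$ (which is derived-invariant for cheap reasons and would make the statement vacuous); once that lemma, Theorem \ref{thmGroclass}, and the birational invariance of $h^{0,\bullet}$ are in hand, the rest is formal.
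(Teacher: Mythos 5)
Your argument is correct, but it takes a genuinely different route from the paper. The paper never applies the Hodge--Deligne polynomial to a singular variety: using the isomorphism $\Proj(\cU_X)\cong\Proj(\cU_Y)$ of Theorem \ref{thmbasechange}(2), it fixes a \emph{common} resolution $W$ of this base and forms the smooth projective total spaces $T_X,T_Y$ (abelian-fibre bundles over $W$ obtained by base change from $\Proj(p_X^*\cU_X)$ and $\Proj(p_Y^*\cU_Y)$); the argument of Theorem \ref{thmGroclass} then gives $[T_X]=[T_Y]$ in $K_0'(\mathrm{Var}/\mathbb C)$, hence equal Hodge numbers for the smooth models $T_X,T_Y$, and one transfers the $h^{0,j}$ to $X$ and $Y$ by birational invariance exactly as you do. You instead take the single equality $[X^{can}_a]=[Y^{can}_a]$ of Theorem \ref{thmGroclass} as a black box and extract $h^{0,j}$ of a resolution from the $u=0$ specialization of $\mathrm{HD}'$ via your lemma on rational singularities. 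That lemma is correct, but note what it imports: that rational singularities are Du Bois (Kov\'acs, Saito) and that $\mathrm{Gr}^0_F H^k(V,\mathbb C)\cong H^k(V,\OO_V)$ for Du Bois singularities, combined with strictness and the purity of $H^k(\widetilde V)$ to kill $\mathrm{Gr}^0_F$ on $W_{k-1}$ --- none of which the paper needs. So your route is leaner in its use of the structural results of \S 5 (you only need the motivic equality, not the finer isomorphism of the algebras $\cU_X$ and $\cU_Y$ over a common base), at the price of nontrivial singular Hodge theory; the paper's route is elementary on the Hodge-theoretic side but leans on the stronger derived invariance of $\Proj(\cU_X)\rightarrow\Alb Z_X$. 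One small remark: the paper's construction actually yields $h^{i,j}(T_X)=h^{i,j}(T_Y)$ for \emph{all} $i,j$, though only the $h^{0,j}$ descend to $X$ and $Y$ at this level of generality, which is why the full statement requires the nefness hypothesis of \S 7.
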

\begin{proof}
We consider again diagrams \eqref{cartHodge}, keeping in mind that the two bottom rows are isomorphic by  Theorem \ref{thmbasechange}(2). Let $W$ be   a resolution of singularities of  $\Proj \big( \U_{X} \big) $, and let $T_X\rightarrow W$ and $T_Y\rightarrow W$ be the corresponding base extensions of $ \Proj \big( p_X^* \U_{X} \big)  $ and $\Proj \big( p_Y^*  \U_{Y} \big)$, respectively. We have that $T_X$ and $T_Y$ are smooth and, repeating the argument of Theorem \ref{thmGroclass} (as in \eqref{gro2}), the equality $[T_X]=[T_Y]$ holds true
in $K'_0(\mathrm{Var}/{\mathbb{C}})$. Hence
\[
{\rm HD}'([T_X])\; = \;  {\rm HD}'([T_Y]),
\]
so that  all Hodge numbers of $T_X$ and $T_Y$ are equal.  
As already noted in the proof of  Theorem \ref{thmGroclass}, 
there are isomorphic non-empty open subsets of $\Proj \big( p_X^* \mathcal{U}_{X} \big)$ and $\Proj \big( \mathcal R(a_X) \big)$.
Since $\Proj \big(p_X^* \mathcal{U}_{X} \big)$ and $\Proj \big( \mathcal R(a_X) \big)=X^{can}_a$ are irreducible,\footnote{
Thanks to \cite[Cor.\ 2.3.5 $(iii)$]{egaiv} and \eqref{cartHodge}, $\Proj \big( p_X^* \mathcal{U}_{X} \big)$ is irreducible because $\Proj \big( \mathcal{U}_{X} \big)$ is so by \cite[Prop.\ 3.1.14 $(i)$]{egaII}. The irreducibility of $\Proj \big( \mathcal R(a_X) \big)$ is clear.} they are indeed birational.\footnote{In the next \S \ref{lastsection0} we will prove that $X^{can}_a$ and $\Proj \big(p_X^* \mathcal{U}_{X} \big)$ are  isomorphic under the assumption that $\omega_X$ is $a_X$-big (see Lemma \ref{lemiso} below). However, at this stage, this result is not necessary.}
 Now the hypothesis that the generic fiber of $a_X$ (and hence of $a_Y$, by Corollary \ref{cor:koddim}) is of general type yields that $X$ is birational to $X^{can}_a$ and  in turn $Y$ is birational to $Y^{can}_a$. Hence $X$ (\emph{resp}. $Y$) is birational to $T_X$ (\emph{resp}. to $T_Y$). Hence $h^{0,j}(X)=h^{0,j}(T_X)=h^{0,j}(T_Y)=h^{0,j}(Y)$. 
\end{proof}


\section{Derived invariance of Hodge numbers for minimal varieties}\label{lastsection0}

In this section we prove the invariance of the Hodge numbers of smooth minimal varieties relatively of general type with respect to their Albanese morphism.
\begin{theorem}\label{invHminimal}
Let $X$ and $Y$ be derived equivalent varieties. Assume 
that  $\omega_X$ is nef and $a_X$-big.\footnote{Actually, since an abelian variety contains no rational curves, the 
nefness of $\omega_X$  is equivalent to its $a_X$-nefness (see, e.g.,  \cite[\S 3.2]{birkarchen}).} Then the rational Hodge structures of $X$ and $Y$ are isomorphic. In particular,
we have 
\[
h^{i, j}(X) = h^{i, j}(Y)
\]
for all $i$ and $j$.
\end{theorem}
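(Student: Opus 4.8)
The plan is to reduce the statement to an equality of motivic classes already available and then apply a Hodge realization. Since $\omega_X$ is assumed nef and $a_X$-big, we are in the situation of Theorem \ref{Di}(b) (proved as Theorem \ref{invminimal}), which gives $[X]=[Y]$ in the localized and completed Grothendieck ring modulo isogenies $\widehat{\mathcal{M}_{\mathbb{C}}^{\prime}}$. It is worth recalling why nefness enters that step: with $\omega_X$ nef and $a_X$-big, the relative base-point-free theorem makes $\omega_X$ $a_X$-semiample, so the natural map $X\to X^{can}_a$ is a crepant birational contraction onto the relative canonical model, which has canonical Gorenstein singularities; Batyrev's change-of-variables formula then identifies $[X]$ in $\widehat{\mathcal{M}_{\mathbb{C}}^{\prime}}$ with the Gorenstein (stringy) volume of $X^{can}_a$, which depends only on the class $[X^{can}_a]$, equal to $[Y^{can}_a]$ by Theorem \ref{thmGroclass}; symmetrically for $Y$. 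So from here on I take $[X]=[Y]$ in $\widehat{\mathcal{M}_{\mathbb{C}}^{\prime}}$ for granted.

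Next I would apply the virtual Hodge structure homomorphism $\chi_{\mathrm{Hdg}}\colon K_0(\mathrm{Var}/{\mathbb{C}})\to K_0(\mathrm{MHS}_{\mathbb{Q}})$, $[V]\mapsto\sum_i(-1)^i[H^i_c(V,\mathbb{Q})]$, to the Grothendieck ring of mixed $\mathbb{Q}$-Hodge structures. An isogeny of abelian varieties induces an isomorphism of rational Hodge structures on cohomology, so $\chi_{\mathrm{Hdg}}$ annihilates the ideal defining $K_0'(\mathrm{Var}/{\mathbb{C}})$; it sends $[\mathbb{A}^1]$ to the invertible class $[\mathbb{Q}(-1)]$; and, since the weights occurring in $\chi_{\mathrm{Hdg}}$ of a class of dimension $\le d$ are bounded above in terms of $d$, it extends continuously to a ring homomorphism out of $\widehat{\mathcal{M}_{\mathbb{C}}^{\prime}}$. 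For $X$ and $Y$ smooth projective the images of $[X]$ and $[Y]$ are the honest finite classes $\sum_i(-1)^i[H^i(X,\mathbb{Q})]$ and $\sum_i(-1)^i[H^i(Y,\mathbb{Q})]$, and a bounded-weight element that maps to $0$ in the weight completion must vanish; hence $[X]=[Y]$ in $\widehat{\mathcal{M}_{\mathbb{C}}^{\prime}}$ gives
\[
\sum_i(-1)^i[H^i(X,\mathbb{Q})]=\sum_i(-1)^i[H^i(Y,\mathbb{Q})]\qquad\text{in }K_0(\mathrm{MHS}_{\mathbb{Q}}).
\]
Since $K_0(\mathrm{MHS}_{\mathbb{Q}})$ splits as the direct sum over $w$ of the Grothendieck groups of pure $\mathbb{Q}$-Hodge structures of weight $w$, and $H^i(X,\mathbb{Q})$, $H^i(Y,\mathbb{Q})$ are pure of weight $i$, this forces $[H^i(X,\mathbb{Q})]=[H^i(Y,\mathbb{Q})]$ in weight $i$ for every $i$. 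Finally, the category of polarizable pure $\mathbb{Q}$-Hodge structures of fixed weight is semisimple abelian with finite-dimensional $\mathrm{Hom}$'s, so its Grothendieck group is free on the isomorphism classes of simple objects, and two effective classes coincide if and only if the underlying objects are isomorphic. Therefore $H^i(X,\mathbb{Q})\cong H^i(Y,\mathbb{Q})$ as $\mathbb{Q}$-Hodge structures for all $i$, and in particular $h^{p,q}(X)=h^{p,q}(Y)$ for all $p,q$.

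The genuinely hard part is not this realization step but Theorem \ref{invminimal} itself: upgrading the equality $[X^{can}_a]=[Y^{can}_a]$ of Theorem \ref{thmGroclass} to the equality $[X]=[Y]$ of the smooth models. That requires both the reduction to the relative canonical model under the nef-and-$a_X$-big hypothesis (so that $X$ and $Y$ are crepant resolutions of $X^{can}_a$ and $Y^{can}_a$) and the motivic-integration comparison of crepant resolutions; granting that, the passage to Hodge structures carried out above is formal, with the only mildly delicate point being the continuity of $\chi_{\mathrm{Hdg}}$ with respect to the completion.
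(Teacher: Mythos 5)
Your argument is correct and is essentially the paper's own proof: Theorem \ref{invminimal} gives $[X]=[Y]$ in $\widehat{\mathcal{M}_{\mathbb{C}}^{\prime}}$, and the Hodge realization is then shown to factor through the quotient by isogenous abelian varieties, through the localization at $\mathbb{L}$, and through the dimension/weight filtration exactly as you describe, with the final upgrade from equality of classes to isomorphism of Hodge structures coming from semisimplicity of the polarizable category. The only cosmetic difference is that the paper maps directly into $K_0(\mathbf{pHS})$ rather than passing through the Grothendieck ring of mixed Hodge structures and invoking polarizability only at the end.
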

This will be the consequence of a result involving a variant of the Grothendieck ring of varieties, for which we recall and introduce some  terminology. Let
\[
\mathcal{M}_{\mathbb{C}}^{\prime} := K_0^{\prime}(\mathrm{Var} / \mathbb C)[\mathbb{L}^{-1}]
\] 
be the localization of the Grothendieck ring $K_0^{\prime}(\mathrm{Var} / \mathbb C)$  by the class of the affine line $\mathbb{L}$ introduced in \cite{itoetal} (see \S \ref{Can1}).
Then we have the completion 
\[
\widehat{\mathcal{M}_{\mathbb{C}}^{\prime}} := \varprojlim_{d \in \mathbb{Z}}\ \frac{\mathcal{M}_{\mathbb{C}}^{\prime}}{F_d \mathcal{M}_{\mathbb{C}}^{\prime}}
\]
of $\mathcal{M}_{\mathbb{C}}^{\prime}$ with respect to the filtration induced by dimension
$(F_{\bullet} \mathcal{M}_{\mathbb{C}}^{\prime})$. This  is defined as follows. For every integer $d$, $F_d \mathcal{M}_{\mathbb{C}}^{\prime}$ is the subgroup of $\mathcal{M}_{\mathbb{C}}^{\prime}$ generated by the elements of the form $[Z] \mathbb{L}^{-m}$, where $Z$ is an algebraic variety such that  
$\dim Z - m \leq d$. This is a
ring filtration of $\mathcal{M}_{\mathbb{C}}^{\prime}$, in the sense that  $F_d \mathcal{M}_{\mathbb{C}}^{\prime} \cdot F_k \mathcal{M}_{\mathbb{C}}^{\prime} \subseteq F_{d+ k} \mathcal{M}_{\mathbb{C}}^{\prime}$, so $\widehat{\mathcal{M}_{\mathbb{C}}^{\prime}}$ is also a ring.

The ring $\widehat{\mathcal{M}_{\mathbb{C}}^{\prime}}$ was defined analogously to the completed Grothendieck ring of varieties  $\widehat{\mathcal{M}_{\mathbb{C}}}$ (the completion of the localized Grothendieck ring $\mathcal{M}_{\mathbb{C}} := K_0(\mathrm{Var} / \mathbb C)[\mathbb{L}^{-1}]$ with respect to the same filtration by dimension). There is the natural  ring homomorphism
\begin{equation}\label{ringhom1}
\widehat{\mathcal{M}_{\mathbb{C}}} \rightarrow \widehat{\mathcal{M}_{\mathbb{C}}^{\prime}}.
\end{equation}

\begin{theorem}\label{invminimal} Under the assumptions of Theorem \ref{invHminimal} the
 equality $[X] = [Y]$
holds  in 
$\widehat{\mathcal{M}_{\mathbb{C}}^{\prime}}$.
\end{theorem}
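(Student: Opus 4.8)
The plan is to upgrade the equality $\big[X^{can}_a\big]=\big[Y^{can}_a\big]$ of Theorem \ref{thmGroclass} to the equality $[X]=[Y]$ by passing through the canonical models, using the minimal model program to relate $X$ to $X^{can}_a$ and the theory of motivic stringy (``Gorenstein volume'') invariants. First I would observe that, since $\omega_X$ is nef and $a_X$-big, the relative base-point-free theorem over $\Alb X$ (applied e.g. to $\omega_X$ using that $r\omega_X-\omega_X$ is $a_X$-nef and $a_X$-big for $r\gg 0$) shows that $\omega_X$ is $a_X$-semiample. Hence the natural morphism $\pi_X\colon X\to X^{can}_a$ is a projective birational morphism over $\Alb X$ which is crepant, $\pi_X^*\omega_{X^{can}_a}=\omega_X$. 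In particular $X^{can}_a$ has canonical singularities and admits the crepant resolution $\pi_X$; since a variety with canonical singularities admitting a crepant resolution is Gorenstein, $X^{can}_a$ is Gorenstein canonical. Symmetrically for $Y$.

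Next I would invoke the Gorenstein stringy invariant $E_{\mathrm{st}}(V)\in\widehat{\mathcal{M}_{\mathbb{C}}}$ of a projective variety $V$ with Gorenstein canonical singularities, computed from a log resolution, together with the fundamental fact (Kontsevich's change-of-variables/motivic integration, in the form used in \cite{itoetal}) that $E_{\mathrm{st}}(V)$ coincides with the class of \emph{any} crepant resolution of $V$. Applying this to $\pi_X$ and $\pi_Y$ gives, in $\widehat{\mathcal{M}_{\mathbb{C}}}$,
\[
\big[X\big]=E_{\mathrm{st}}(X^{can}_a)\qquad\text{and}\qquad \big[Y\big]=E_{\mathrm{st}}(Y^{can}_a).
\]
So it suffices to prove $E_{\mathrm{st}}(X^{can}_a)=E_{\mathrm{st}}(Y^{can}_a)$ in $\widehat{\mathcal{M}_{\mathbb{C}}^{\prime}}$.

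For this I would use the explicit product description established in the proof of Theorem \ref{thmGroclass}. Since $\omega_X$ is $a_X$-big, Lemma \ref{lemiso} gives $X^{can}_a\cong\Proj(p_X^*\mathcal{U}_X)$, which by the cartesian square \eqref{cartHodge} is $\Proj(\mathcal{U}_X)\times_{\Alb Z_X}\Alb X$; likewise $Y^{can}_a\cong\Proj(\mathcal{U}_Y)\times_{\Alb Z_Y}\Alb Y$, and $\Proj(\mathcal{U}_X)\cong\Proj(\mathcal{U}_Y)$ over $\Alb Z_X\cong\Alb Z_Y$ by Theorem \ref{thmbasechange}(2). Writing $M$ for this common variety and $B$ for its base, $X^{can}_a$ and $Y^{can}_a$ are the pullbacks of $M\to B$ along $p_X$ and $p_Y$, hence are smooth Zariski-locally trivial bundles over $M$ with fibers the abelian varieties $\ker p_X$ and $\ker p_Y$ (a section of $p_X$ over an open of $B$ trivializes the bundle there). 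Multiplicativity of $E_{\mathrm{st}}$ over such bundles — which I would prove by pulling back a log resolution of $M$, noting it remains a log resolution with the same discrepancies — yields $E_{\mathrm{st}}(X^{can}_a)=[\ker p_X]\cdot E_{\mathrm{st}}(M)$ and $E_{\mathrm{st}}(Y^{can}_a)=[\ker p_Y]\cdot E_{\mathrm{st}}(M)$. Since $\ker p_X$ and $\ker p_Y$ are isogenous (Claim \ref{claim}), $[\ker p_X]=[\ker p_Y]$ in $K_0'(\mathrm{Var}/\mathbb{C})$, hence in $\widehat{\mathcal{M}_{\mathbb{C}}^{\prime}}$; therefore $E_{\mathrm{st}}(X^{can}_a)=E_{\mathrm{st}}(Y^{can}_a)$ there, and combining with the displayed identities gives $[X]=[Y]$ in $\widehat{\mathcal{M}_{\mathbb{C}}^{\prime}}$.

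The minimal model input (the crepant morphism to the relative canonical model) is routine; the hard part is the motivic-integration package in exactly this form — verifying that $X^{can}_a$ and $M$ are Gorenstein canonical so that $E_{\mathrm{st}}$ genuinely lands in $\widehat{\mathcal{M}_{\mathbb{C}}}$, establishing the bundle multiplicativity above, and, most deeply, the identity $[X]=E_{\mathrm{st}}(X^{can}_a)$, i.e. that $K$-equivalent smooth projective varieties have equal class in $\widehat{\mathcal{M}_{\mathbb{C}}}$. An alternative that sidesteps any Gorenstein check is to run the whole argument with the $\mathbb{Q}$-Gorenstein stringy invariant valued in a fractional-powers extension of $\widehat{\mathcal{M}_{\mathbb{C}}^{\prime}}$, and then observe a posteriori that the equality $[X]=E_{\mathrm{st}}(X^{can}_a)$ pushes everything back down into $\widehat{\mathcal{M}_{\mathbb{C}}^{\prime}}$.
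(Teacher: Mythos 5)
Your proposal follows essentially the same route as the paper: crepant birational morphism $X\to X^{can}_a$ via the relative basepoint-free theorem, the identity $\mathbb{L}^{-\dim X}[X]=\mu_{\mathrm{Gor}}(X^{can}_a)$, reduction to $\mu'_{\mathrm{Gor}}(X^{can}_a)=\mu'_{\mathrm{Gor}}(Y^{can}_a)$, and then the identification $X^{can}_a\cong\Proj(\mathcal U_X)\times_{\Alb Z_X}\Alb X$ (Lemma \ref{lemiso}), pullback of a log resolution from $\Proj(\mathcal U_X)$ (Lemma \ref{lemmalogres}), multiplicativity by $[\ker p_X]$, and Claim \ref{claim}.

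One justification you give is false as stated: the fibration $p_X\colon \Alb X\to\Alb Z_X$ (and hence its base change to $\Proj(\mathcal U_X)$) is \emph{not} Zariski-locally trivial in general. A section of $p_X$ over a nonempty open subset would be a rational map from a smooth variety to an abelian variety, hence would extend to a global section splitting $\Alb X$ as $\ker p_X\times\Alb Z_X$, which fails in general. The needed identity $[F_{X,J}]=[\ker p_X][F_J]$ must instead be obtained as in \eqref{gro2}, i.e., by the standard Grothendieck-ring property for surjective homomorphisms of abelian varieties with connected fibers cited in the proof of Theorem \ref{thmGroclass} (and, in any case, it is only needed modulo isogeny, where one may replace $\Alb X$ by $\ker p_X\times\Alb Z_X$). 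Likewise, your shortcut ``canonical singularities plus a crepant resolution imply Gorenstein'' is not the paper's argument; the Cartier-ness of $K_{X^{can}_a}$ is proved directly from the relative basepoint-free theorem, though your fallback via $\widehat{\mathcal M'_{\mathbb C}}[\mathbb L^{1/m}]$ also works and is exactly how the paper handles the non-nef case in Theorem \ref{gorequivthm}.
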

\noindent The proof of Theorem \ref{invminimal} will be the content of the next subsections, starting from 7.1.

\begin{proof}[Proof of Theorem \ref{invHminimal}]  The result follows from Theorem \ref{invminimal} in the usual way (see, e.g., \cite[Chapter 2, Corollary 4.3.8]{motivicint}). We include the details for the reader's benefit. From the mixed Hodge theory of Deligne, it follows the existence of a ring homomorphism
\[
\chi_{\mathrm{Hdg}} \colon K_0(\mathrm{Var} / \mathbb C) \rightarrow K_0(\mathbf{pHS})
\] 
from $K_0(\mathrm{Var} / \mathbb C)$ to the Grothendieck ring of the \emph{semisimple} category $\mathbf{pHS}$ of polarizable Hodge structures \cite[Chapter 2, \S 3.2]{motivicint}. If a variety $X$ is smooth and proper, then 
\[
\chi_{\mathrm{Hdg}}([X]) = \sum_{k \geq 0} (-1)^k [H^k(X, \mathbb Q)]
\]
is the class of its rational Hodge structure in $K_0(\mathbf{pHS})$. Isogenous abelian varieties have isomorphic rational Hodge structures, hence $\chi_{\mathrm{Hdg}}$ passes to the quotient $K_0^{\prime}(\mathrm{Var} / \mathbb C)$
\[
\chi_{\mathrm{Hdg}}^{\prime} \colon K_0^{\prime}(\mathrm{Var} / \mathbb C) \rightarrow K_0(\mathbf{pHS}).
\]
Moreover, $\chi_{\mathrm{Hdg}}(\mathbb L)$ is invertible in $K_0(\mathbf{pHS})$ (indeed, it coincides with the inverse of the class of the Tate object $\mathbb{Q}(1)$ in $K_0(\mathbf{pHS})$). Therefore, we get an induced ring homomorphism
\begin{equation}\label{chiHdg}
\chi_{\mathrm{Hdg}}^{\prime} \colon \mathcal{M}_{\mathbb{C}}^{\prime} \rightarrow K_0(\mathbf{pHS}).
\end{equation}
Let $\overline{\mathcal{M}_{\mathbb{C}}^{\prime}}$ be the image of the map $\mathcal{M}_{\mathbb{C}}^{\prime} \to \widehat{\mathcal{M}_{\mathbb{C}}^{\prime}}$. By definition, it is isomorphic to $\mathcal{M}_{\mathbb{C}}^{\prime} / F_{\infty}\mathcal{M}_{\mathbb{C}}^{\prime}$, where $F_{\infty}\mathcal{M}_{\mathbb{C}}^{\prime} := \bigcap_{d \in \mathbb Z} F_{d}\mathcal{M}_{\mathbb{C}}^{\prime}$. We want to prove that \eqref{chiHdg} factorizes through  $\overline{\mathcal{M}_{\mathbb{C}}^{\prime}}$. In order  to achieve this, one needs   that $\chi_{\mathrm{Hdg}}^{\prime}(F_{\infty}\mathcal{M}_{\mathbb{C}}^{\prime}) = 0$. For every integer $d$, let $W_d K_0(\mathbf{pHS})$ be the subgroup of $K_0(\mathbf{pHS})$ generated by the polarizable Hodge structures of weight $\leq d$.
Since the subgroup $F_{d}\mathcal{M}_{\mathbb{C}}^{\prime}$ is generated by the elements of the form $[Z] \mathbb{L}^{-m}$ with $\dim Z - m \leq d$, we have 
\[
\chi_{\mathrm{Hdg}}^{\prime}([Z] \mathbb{L}^{-m}) = \chi_{\mathrm{Hdg}}^{\prime}([Z]) \chi_{\mathrm{Hdg}}^{\prime}(\mathbb{L})^{-m} \in  
W_{2(\dim Z - m)} K_0(\mathbf{pHS}) \subseteq W_{2d} K_0(\mathbf{pHS}).
\]
So, $\chi_{\mathrm{Hdg}}^{\prime}(F_{\infty}\mathcal{M}_{\mathbb{C}}^{\prime}) \subseteq \bigcap_{d \in \mathbb{Z}} W_d K_0(\mathbf{pHS})$. But $\bigcap_{d \in \mathbb{Z}} W_d K_0(\mathbf{pHS}) = 0$ and hence there exists a ring homomorphism
\[
\overline{\mathcal{M}_{\mathbb{C}}^{\prime}} \rightarrow K_0(\mathbf{pHS})
\]
through which \eqref{chiHdg} factorizes.
If now $X$ and $Y$ are smooth projective such that $[X] = [Y]$ holds in $\widehat{\mathcal{M}_{\mathbb{C}}^{\prime}}$, then  the rational Hodge structures of $X$ and $Y$ have the same class in  $K_0(\mathbf{pHS})$. Finally, since the category $\mathbf{pHS}$ is semisimple, one has that these two Hodge structures are indeed isomorphic.

The equality of the Hodge numbers follows directly from Theorem \ref{invminimal} much more easily.
Indeed, 
the Hodge-Deligne polynomial $\mathrm{HD}^{\prime} \colon K_0^{\prime}(\mathrm{Var} / \mathbb{C}) \rightarrow \mathbb Z[u, v]$ appearing in \S \ref{final} extends to a ring homomorphism
\[
\widehat{\mathcal{M}_{\mathbb{C}}^{\prime}} \rightarrow \mathbb{Z}[\![u, v, (uv)^{-1}]\!].
\]
So, from Theorem \ref{invminimal}, it follows  that
\[
\mathrm{HD}^{\prime}([X]) =  \mathrm{HD}^{\prime}([Y]),
\]
 and therefore all Hodge numbers of $X$ and $Y$ are equal.  \end{proof}

\subsection{Gorenstein volume of a log terminal variety. } In this subsection and in the next one we recall some background involved in the proof of Theorems \ref{invminimal} and  \ref{thm:minimal}. Note that the former is a particular case of the latter.

Let 
$W$ be an algebraic variety with log terminal singularities, and let
$\widetilde W \rightarrow W$ be a log resolution of $W$ of relative canonical divisor
\[
K_{\widetilde{W}/ W} = \sum_{i \in I} a_i F_i\>.
\] 
Let $m$ be a positive integer such that $m K_W$ is  Cartier. The \emph{Gorenstein volume} of $W$ is defined by the formula
\[
\mu_{\mathrm{Gor}}(W) := \mathbb{L}^{- \dim W} \sum_{J \subseteq I} [F_J] \prod_{j \in J} \big( \frac{\mathbb{L} - 1}{\mathbb{L}^{a_j+1} -1} - 1 \big) \, \in \, \widehat{\mathcal{M}_{\mathbb{C}}}[\mathbb{L}^{\frac{1}{m}}]:= \widehat{\mathcal{M}_{\mathbb C}}[T]/(T^m -\mathbb{L}),
\]
where, for every subset $ J \subseteq I$, we set $F_J := \cap_{j \in J} F_j$, and $[F_J]$ is its class in $K_0(\mathrm{Var} / \mathbb{C})$. 
(See also \cite[\S 7.3.4]{motivicint} for an equivalent definition involving motivic Igusa zeta functions.) It may be checked that the definition does not depend on the chosen log resolution. Moreover, in order to compute $\mu_{\mathrm{Gor}}(W)$, one can  take any resolution of $W$ satisfying the only requirement that its exceptional locus is a divisor such that all irreducible components $F_i$ \emph{ with non-zero discrepancies} $a_i$ are simple normal crossing (see \cite[Theorem 3.4]{batyrev}, \cite[Definition 2.1]{batyrev2} and \cite{craw}).  
Note that  if $W$ admits a   \emph{crepant} resolution 
$T \rightarrow W$,  then
\begin{equation}\label{crepres0}
\mathbb{L}^{- \dim T} [ T ]  = \mu_{\mathrm{Gor}}(T) = \mu_{\mathrm{Gor}}(W),
\end{equation} 
where the first equality follows from the smoothness of $T$. Moreover, if $K_W$ is a Cartier divisor, i.e., $W$ is Gorenstein, then $\mu_{\mathrm{Gor}}(W) \in \widehat{\mathcal{M}_{\mathbb{C}}}$  by definition.

Finally, we denote by $\mu^{\prime}_{\mathrm{Gor}}(W)$ the image of the Gorenstein volume of $W$
via
 the ring homomorphism
\begin{equation}\label{gorprime}
\widehat{\mathcal{M}_{\mathbb{C}}}[\mathbb{L}^{\frac{1}{m}}] \rightarrow \widehat{\mathcal{M}_{\mathbb{C}}^{\prime}}[\mathbb{L}^{\frac{1}{m}}]
\end{equation}
induced by \eqref{ringhom1}.

\subsection{Relative abundance conjecture}\label{relabconj}
Next, we address the proof of Theorem \ref{invminimal}. We first recall that, under the assumption of Theorem \ref{invminimal}, the abundance conjecture is known to hold true. Indeed, since  $\omega_X$ is $a_X$-big and nef,   it is $a_X$-semiample by the relative basepoint-free theorem \cite[Theorem 3.24]{kollarmori}.\footnote{By \cite[Corollary 3.8]{Zhu}, $\omega_X$ is indeed semiample.}
 Therefore,  there exists a natural birational \emph{morphism}
\[
\vartheta \colon X \rightarrow  X^{can}_{a}.
\]
Since $X$ is smooth (hence Gorenstein), it follows that the canonical divisor of $X^{can}_a$  is Cartier.\footnote{We thank Fabio Bernasconi for having pointed out this fact (and its proof) to us.} Indeed the canonical divisor $K_X$ of $X$ is Cartier, $\vartheta$-nef and $\vartheta$-big (as $\vartheta$ is birational). Hence, by the relative basepoint-free theorem \cite[Theorem 3.24]{kollarmori}, $dK_X$ is $\vartheta$-free for \emph{all} $d \gg 0$. In particular, $K_X = (d+1)K_X - dK_X \sim \vartheta^* D$ for a Cartier divisor $D$ on the relative canonical model  of $X$. So $K_{X^{can}_a} = {\vartheta}_*K_X \sim D$ and $K_{X^{can}_a}$ is Cartier.

Moreover, $\vartheta$ is  a \emph{crepant} resolution.  This is a consequence of the 
 nefness of $\omega_X$
together with the
fact that $X^{can}_{a}$ has canonical singularities, \emph{i.e.}, the relative canonical divisor $K_{X/X^{can}_{a}} \geq 0$ (see \cite[Theorem 0-3-12]{kamama}). Indeed, if $K_{X/X^{can}_{a}} > 0$, then by the Hodge  index theorem (see, e.g., \cite[Lemma 3.6.2 (1)]{bchm}) there exists a curve $C \subseteq X$ contracted by $\vartheta$ such that $(K_X \cdot C) = (K_{X/X^{can}_{a}} \cdot C)  < 0$. This gives a contradiction.
Therefore, by \eqref{crepres0},
\begin{equation}\label{eqcartier}
\mu_{\mathrm{Gor}}(X^{can}_{a})  = \mathbb{L}^{- \dim X}  [X]
\end{equation}
holds in $\widehat{\mathcal{M}_{\mathbb{C}}}$.
Note that the same holds for $Y$. Indeed, as already noted, $\omega_Y$ is $a_Y$-big,
and it is also nef by \cite[Theorem 1.4]{ka2}.

\subsection{Proof of Theorem \ref{invminimal}}
 By \eqref{eqcartier} and \eqref{gorprime}, we have that
\begin{equation}\label{cregor}
\mu^{\prime}_{\mathrm{Gor}}(X^{can}_{a}) = \mathbb{L}^{- \dim X} [X]
\end{equation}
 in the ring $\widehat{\mathcal{M}_{\mathbb{C}}^{\prime}}$ and the same happens for $Y$. In order to conclude the proof of Theorem \ref{invminimal} it remains to verify that
\begin{equation}\label{gorequiv}
\mu^{\prime}_{\mathrm{Gor}}(X^{can}_{a}) = \mu^{\prime}_{\mathrm{Gor}}(Y^{can}_{a})
\end{equation}
holds in $\widehat{\mathcal{M}_{\mathbb{C}}^{\prime}}$. This will be the content of the next subsection.

\subsection{Proof of \eqref{gorequiv}}
We prove, more generally, the following theorem.
\begin{theorem}\label{gorequivthm}
Let $X$ and $Y$ be derived equivalent varieties. If the general fiber of the Albanese morphism of $X$ is of general type, then \eqref{gorequiv} holds true in $\widehat{\mathcal{M}_{\mathbb{C}}^{\prime}}[\mathbb{L}^{\frac{1}{m}}]$, where  $m \gg 0$ is such that $mK_{X^{can}_{a}}$ and $mK_{Y^{can}_{a}}$ are both Cartier divisors.\footnote{Note that, without the nefness assumption on $\omega_X$,  it may happen that $X^{can}_a$ is $\mathbb Q$-Gorenstein but not Gorenstein.}
\end{theorem}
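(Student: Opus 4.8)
The plan is to compute both Gorenstein volumes in terms of the common variety $\Proj(\mathcal{U}_X)\cong\Proj(\mathcal{U}_Y)$ provided by Theorem \ref{thmbasechange}(2), and of the kernels $\ker p_X,\ker p_Y$, and then to conclude using that $\ker p_X$ and $\ker p_Y$ are isogenous (Claim \ref{claim}, proved inside the proof of Theorem \ref{thmGroclass}). Since the general fiber of $a_X$ is of general type, $\omega_X$ is $a_X$-big, so Lemma \ref{lemiso} gives $X^{can}_a\cong\Proj(p_X^*\mathcal{U}_X)$, and by functoriality of $\Proj$ (as in the proof of Theorem \ref{thmGroclass}) the latter is the fiber product $V_X\times_{\Alb Z_X}\Alb X$, where $V_X:=\Proj(\mathcal{U}_X)$; the same holds on $Y$, the general fiber of $a_Y$ being of general type by Corollary \ref{cor:koddim}. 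As the Gorenstein volume is an isomorphism invariant, it suffices to compare $\mu^{\prime}_{\mathrm{Gor}}(V_X\times_{\Alb Z_X}\Alb X)$ and $\mu^{\prime}_{\mathrm{Gor}}(V_Y\times_{\Alb Z_Y}\Alb Y)$.

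Next I would exploit that the first projection $V_X\times_{\Alb Z_X}\Alb X\to V_X$ is the base change of the smooth surjective homomorphism $p_X\colon\Alb X\to\Alb Z_X$, hence is itself smooth and surjective with trivial relative canonical sheaf (the sheaf of relative differentials of a morphism of abelian varieties is free). Therefore $V_X$ is again klt and $\mathbb{Q}$-Gorenstein with $mK_{V_X}$ Cartier for the same $m$ as in the statement (these properties descend along the faithfully flat smooth morphism above), so $\mu_{\mathrm{Gor}}(V_X)$ is defined in $\widehat{\mathcal{M}_{\mathbb{C}}}[\mathbb{L}^{1/m}]$. Moreover, if $\widetilde{V}_X\to V_X$ is a log resolution with exceptional divisors $F_i$ and discrepancies $a_i$, then $\widetilde{V}_X\times_{\Alb Z_X}\Alb X\to X^{can}_a$ is a resolution of the kind allowed in the definition of $\mu_{\mathrm{Gor}}$: its exceptional locus is the divisor with components $F_i\times_{\Alb Z_X}\Alb X$, still simple normal crossing (smoothness and transversality being preserved under the smooth base change $p_X$), and the discrepancies are again the $a_i$, since the relative canonical divisor of $\widetilde{V}_X\times_{\Alb Z_X}\Alb X$ over $X^{can}_a$ is the pullback of $K_{\widetilde{V}_X/V_X}$. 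Inserting this resolution into the defining formula, using the product formula $[F_J\times_{\Alb Z_X}\Alb X]=[\ker p_X]\,[F_J]$ in $K_0(\mathrm{Var}/\mathbb{C})$ for the fibration $p_X$ (already used in the proof of Theorem \ref{thmGroclass}; here $F_J=\bigcap_{j\in J}F_j$) and $\dim X^{can}_a=\dim V_X+\dim\ker p_X$, I obtain
\[
\mu_{\mathrm{Gor}}(X^{can}_a)=\mathbb{L}^{-\dim\ker p_X}\,[\ker p_X]\cdot\mu_{\mathrm{Gor}}(V_X)\qquad\text{in }\widehat{\mathcal{M}_{\mathbb{C}}}[\mathbb{L}^{1/m}],
\]
and symmetrically $\mu_{\mathrm{Gor}}(Y^{can}_a)=\mathbb{L}^{-\dim\ker p_Y}\,[\ker p_Y]\cdot\mu_{\mathrm{Gor}}(V_Y)$.

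Finally, passing to $\widehat{\mathcal{M}_{\mathbb{C}}^{\prime}}[\mathbb{L}^{1/m}]$: the isomorphism $V_X\cong V_Y$ of Theorem \ref{thmbasechange}(2) gives $\mu^{\prime}_{\mathrm{Gor}}(V_X)=\mu^{\prime}_{\mathrm{Gor}}(V_Y)$, while Claim \ref{claim} gives that $\ker p_X$ and $\ker p_Y$ are isogenous, hence $[\ker p_X]^{\prime}=[\ker p_Y]^{\prime}$ and $\dim\ker p_X=\dim\ker p_Y$. Comparing the two displayed identities then yields $\mu^{\prime}_{\mathrm{Gor}}(X^{can}_a)=\mu^{\prime}_{\mathrm{Gor}}(Y^{can}_a)$, which is \eqref{gorequiv}. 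I expect the only real work --- more careful bookkeeping than a genuine obstacle, given the preceding sections --- to be checking that log resolutions, discrepancies, the simple normal crossing condition, and the klt and $\mathbb{Q}$-Gorenstein hypotheses all behave correctly under the smooth base change $p_X$, so that the Gorenstein volume of the fiber product splits off exactly the factor $\mathbb{L}^{-\dim\ker p_X}[\ker p_X]$ over that of $V_X$.
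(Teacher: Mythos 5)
Your proposal is correct and follows essentially the same route as the paper: Lemma \ref{lemiso}, base change of a log resolution of $\Proj(\mathcal U_X)$ along $p_X$ with the relative canonical divisor pulling back (so discrepancies are unchanged and the divisors with non-zero discrepancies stay simple normal crossing), the motivic identity $[F_J\times_{\Alb Z_X}\Alb X]=[\ker p_X][F_J]$, and Claim \ref{claim}. The only difference is cosmetic --- you package the computation as the product formula $\mu_{\mathrm{Gor}}(X^{can}_a)=\mathbb{L}^{-\dim\ker p_X}[\ker p_X]\cdot\mu_{\mathrm{Gor}}(V_X)$, whereas the paper compares the two defining sums term by term --- and your descent remarks ($\mathbb{Q}$-Gorenstein, klt, Cartier index) are exactly the bookkeeping the paper handles implicitly.
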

\noindent This can be done quite similarly to the proof of Theorem \ref{thmGroclass}.
Keeping the notation of \S \ref{E1}, we begin with
some preparatory lemmas.
\begin{lemma}\label{lemiso}
If $\omega_X$ is $a_X$-big, then
\[
X^{can}_{a} \simeq \Proj \big( p_X^* \U_{X} \big) 
\]
over $\Alb X$.
\end{lemma}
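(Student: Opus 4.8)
The plan is to recognise both sides as relative canonical models over $\Alb X$ and then to apply uniqueness of the (relative) canonical model. First, by functoriality of $\Proj$ (as already used in the proof of Theorem~\ref{thmGroclass}), $\Proj(p_X^*\mathcal{U}_X)\cong\Proj(\mathcal{U}_X)\times_{\Alb Z_X}\Alb X$, so it suffices to understand $\Proj(\mathcal{U}_X)$ and then base change. The key first step is that $\mathcal{U}_X$ is itself a relative canonical algebra. Let $\mu\colon X''\to X$ be the connected étale cover attached to the finite subgroup $H_X\subseteq\Pic0 X$ fixed in Setting/Notation~\ref{notation} — equivalently $X''=X\times_{\Alb X}A''$, where $A''\to\Alb X$ is the isogeny dual to $\Pic0 X\to\Pic0 X/H_X$; it is connected because $H_1(X,\mathbb Z)\to H_1(\Alb X,\mathbb Z)$ is surjective — and set $c_{X''}:=c_X\circ\mu\colon X''\to\Alb Z_X$. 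Since $\mu_*\OO_{X''}=\bigoplus_{\alpha\in H_X}P_{-\alpha}$ and, by Proposition~\ref{prop:simplified}(b) together with the vanishing ${c_X}_*(\omega_X^{\otimes m}\otimes P_{-\alpha})=0$ for those $\alpha\in H_X$ not equal to any $\alpha_{m,j}$ (which uses $H_X\cap\Pic0 Z_X=\{0\}$, exactly as in the proof of Proposition~\ref{prop:simplified}(b)), the projection formula gives an isomorphism of graded $\OO_{\Alb Z_X}$-algebras
\[
\mathcal{R}(c_{X''})\;=\;\bigoplus_{m\ge0}\bigoplus_{\alpha\in H_X}{c_X}_*(\omega_X^{\otimes m}\otimes P_{-\alpha})\;\cong\;\bigoplus_{m\ge0}\bigoplus_{j=1}^{N_X(m)}\G_{X,m,j}\;=\;\mathcal{U}_X.
\]
Hence $\Proj(\mathcal{U}_X)$ is the relative canonical model of $c_{X''}$; in particular it has canonical singularities and its canonical sheaf is relatively ample over $\Alb Z_X$.

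Next I would base change along $p_X\colon\Alb X\to\Alb Z_X$, which is smooth with trivial relative dualizing sheaf because its fibres are abelian varieties. Therefore $W_2:=\Proj(p_X^*\mathcal{U}_X)\cong\Proj(\mathcal{U}_X)\times_{\Alb Z_X}\Alb X$ has canonical singularities; moreover $\omega_{W_2}$ is the pullback of $\omega_{\Proj(\mathcal{U}_X)}$, and, since the fibres of $W_2\to\Alb X$ coincide with the fibres of $\Proj(\mathcal{U}_X)\to\Alb Z_X$, the sheaf $\omega_{W_2}$ is relatively ample over $\Alb X$. On the other side, $W_1:=X^{can}_a=\Proj(\mathcal{R}(a_X))$ is the relative canonical model of $a_X$; here the hypothesis that $\omega_X$ is $a_X$-big is exactly what ensures that $W_1$ is a genuine canonical model — it has canonical singularities and relatively ample canonical sheaf over $\Alb X$ (and is birational to $X$ over $a_X(X)$).

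To finish I would check that $W_1$ and $W_2$ are birational over $\Alb X$: over the generic point $\eta$ of $a_X(X)$ every line bundle $P_{\alpha_{m,j}}$ restricts to the trivial bundle, and since the $\alpha_{m,j}$ all lie in the group $H_X$ these trivializations can be chosen multiplicatively, yielding an isomorphism of graded $k(\eta)$-algebras $(p_X^*\mathcal{U}_X)\otimes k(\eta)\cong\mathcal{R}(a_X)\otimes k(\eta)$; alternatively one invokes the piecewise-isomorphism argument already used in the proof of Theorem~\ref{invHN1}. Thus $W_1$ and $W_2$ are birational over $\Alb X$, each with canonical singularities and relatively ample canonical sheaf over $\Alb X$, and the uniqueness of the relative canonical model gives $W_1\cong W_2$ over $\Alb X$.

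The step I expect to be the main obstacle is controlling the singularities of $W_2=\Proj(p_X^*\mathcal{U}_X)$: a priori this is a fibre product of two varieties that are singular in general, so it seems essential to route the argument through the identification $\mathcal{U}_X\cong\mathcal{R}(c_{X''})$ and through the smoothness of $p_X$ and the triviality of its relative dualizing sheaf, rather than to argue with $\mathcal{U}_X$ directly. A secondary subtlety is the multiplicative compatibility of the trivializations needed for the birationality of $W_1$ and $W_2$, which relies on the group-theoretic choice of the representatives $\alpha_{m,j}$ made in Setting/Notation~\ref{notation}.
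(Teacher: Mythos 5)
Your identification $\mathcal U_X\cong\mathcal R(c_{X''})$ via the \'etale cover attached to $H_X$ is correct and in the spirit of the paper, and your final appeal to birationality plus uniqueness of the relative canonical model is also the right endgame. But there is a genuine gap at the pivotal step: ``Hence $\Proj(\mathcal U_X)$ is the relative canonical model of $c_{X''}$; in particular it has canonical singularities and its canonical sheaf is relatively ample over $\Alb Z_X$.'' For $\Proj$ of a relative canonical ring to be the relative canonical model in this sense, the canonical bundle must be relatively \emph{big} over the base; otherwise $\Proj\,\mathcal R(f)$ is only the relative ample (Iitaka) model, whose canonical sheaf is unrelated to $\OO(1)$ and need not be relatively ample (the base of an Iitaka fibration can be $\mathbb P^1$). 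And $a_X$-bigness of $\omega_X$ does \emph{not} imply $c_{X''}$-bigness of $\omega_{X''}$: take $X=C\times E$ with $g(C)\ge 2$ and $E$ elliptic; then $a_X$ is generically finite, so $\omega_X$ is trivially $a_X$-big, while $c_X\colon X\to J(C)$ has fibres isomorphic to $E$ on which $\omega_X$ restricts to $\OO_E$, so $\omega_X$ is not $c_X$-big (and passing to the \'etale cover $X''$ does not help). Since the canonical singularities of $W_2$ and the relative ampleness of $\omega_{W_2}$ over $\Alb X$ are deduced entirely from this claim, the uniqueness argument at the end is not supported.

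The paper circumvents exactly this difficulty by never leaving $\Alb X$: it base-changes both algebras along a multiplication-by-$k_X$ isogeny $\mu_{k_X}$ of $\Alb X$ that kills all the torsion twists $P_{\alpha_{m,j}}$, so that $X^{can}_a$ and $\Proj(p_X^*\mathcal U_X)$ acquire a \emph{common} finite \'etale cover $Z$; it then transports canonical singularities and the relative ampleness of the canonical divisor \emph{from} $X^{can}_a$ (where the hypothesis that $\omega_X$ is $a_X$-big applies directly) through the \'etale morphisms \emph{to} $\Proj(p_X^*\mathcal U_X)$, and only then invokes uniqueness of the relative canonical model. To repair your argument you would need to establish the canonical-model properties of $\Proj(p_X^*\mathcal U_X)$ by some such transfer from $X^{can}_a$, rather than by interpreting $\Proj(\mathcal U_X)$ as a relative canonical model over $\Alb Z_X$.
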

\begin{proof}
By going back to the proof of Theorem \ref{thmGroclass}, we have already observed that the two algebras
$\mathcal R(a_X)$ and
$p_X^*\mathcal U_X$
only differ for the twisting of the torsion line bundles appearing in the finite semigroup $\{P_{\alpha_{m, j}}\}_{m \geq 1, j}$. Since the $\alpha_{m, j}$'s are torsion points of $\Alb X$, there exists an  integer $k_X$ big enough such that $\mu_{k_X}^* P_{\alpha_{m, j}} \cong  \OO_{\Alb X}$ for all $m$ and  $j$, where $\mu_{k_X} \colon \Alb X \rightarrow \Alb X$ is the multiplication-by-$k_X$ isogeny of $\Alb X$. Therefore, 
$\mu_k^*\mathcal R(a_X)$ and
$\mu_k^*p_X^*\mathcal U_X$  are isomorphic in degrees $\geq 1$. Then,
 from \cite[Prop.\ 3.1.8 $(ii)$]{egaII} and the functoriality of the $\Proj$ construction \cite[Prop.\ 3.5.3]{egaII}, it follows
 that $X^{can}_{a} \rightarrow \Alb X$ and $\Proj \big( p_X^* \U_{X} \big) \rightarrow \Alb X$ become isomorphic to the same variety $Z$ after base change with the isogeny $\mu_{k_X}$. Namely, we have cartesian commutative diagrams
\[
\xymatrix{Z \ar[d]_-{\mu_{k_X}'} \ar[r] &\Alb X \ar[d]^-{\mu_{k_X}}  &Z \ar[l] \ar[d]^-{\mu_{k_X}''} \\
X^{can}_{a} \ar[r] &\Alb X  &\Proj \big( p_X^* \U_{X} \big) \ar[l]
}
\]
In particular,  since $X^{can}_{a}$ has canonical singularities,  $\Proj \big( p_X^* \U_{X} \big)$ has canonical singularities as well (see \cite[Proposition 2.15]{kollarbook}).
Moreover, since  $K_{X^{can}_{a}}$ is relatively ample with respect to   $X^{can}_{a} \rightarrow \Alb X$, 
one has that  
 the canonical divisor of $\Proj \big( p_X^* \U_{X} \big)$ is relatively ample with respect to  its structure morphism $\Proj \big( p_X^* \U_{X} \big) \rightarrow \Alb X$ because $\mu_{k_X}$ (and hence $\mu_{k_X}'$ and $\mu_{k_X}''$) is an \'etale finite surjective  morphism.
By the uniqueness of the relative canonical model \cite[Theorem 0-3-12]{kamama},  $X^{can}_{a}$ and $\Proj \big( p_X^* \U_{X} \big)$ are therefore isomorphic over $\Alb X$.
\end{proof}

We get the cartesian diagram
\begin{equation}\label{0logres}
\xymatrix{
X^{can}_{a}    \ar[r]^-{\tau_{X}} \ar[d]^{p_X'} & \Alb X \ar[d]^{p_X}  \\
\Proj \big(  \U_{X} \big)    \ar[r]^-{  \sigma_X } & \Alb Z_X }
\end{equation}

\begin{lemma}\label{lemmalogres}
If $T$ is a log resolution of $\Proj \big( \U_{X} \big)$, then the corresponding base change via the morphism $p_X'$ defined in \eqref{0logres} gives a  resolution $\widetilde T$ of $X^{can}_{a}$ such that the exceptional divisors with non-zero discrepancies are simple normal crossing.\footnote{Actually, it can be proved that $\widetilde T$ is a log resolution of $X^{can}_{a}$. However this is unnecessary for our purposes.}
\end{lemma}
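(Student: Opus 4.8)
The plan is to construct $\widetilde T$ as a fibre product and read off everything from the two relevant cartesian squares: the one appearing in \eqref{0logres} and the new one defining $\widetilde T$. First recall from \S\ref{R3} that $p_X\colon \Alb X\to\Alb Z_X$ is a surjective homomorphism of abelian varieties with connected fibres; hence it is smooth and proper, with connected fibres (each a translate of the abelian variety $\ker p_X$) and with trivial relative canonical sheaf $\omega_{\Alb X/\Alb Z_X}\cong\OO_{\Alb X}$, being the determinant of the relative cotangent sheaf, which is a trivial bundle. Since $\omega_X$ is $a_X$-big, Lemma \ref{lemiso} lets us identify $X^{can}_a$ with $\Proj(p_X^*\U_X)=\Proj(\U_X)\times_{\Alb Z_X}\Alb X$, so that the morphism $p_X'\colon X^{can}_a\to\Proj(\U_X)$ of \eqref{0logres} is the base change of $p_X$; in particular $p_X'$ is again smooth and proper with connected fibres, and, because relative dualizing sheaves commute with base change, $\omega_{X^{can}_a/\Proj(\U_X)}\cong\OO_{X^{can}_a}$, i.e. $p_X'$ is crepant. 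Since $X^{can}_a$ is $\mathbb Q$-Gorenstein with canonical singularities (it is the relative canonical model) and $p_X'$ is smooth and surjective, the same holds for $\Proj(\U_X)$, so speaking of discrepancies along a log resolution $r\colon T\to\Proj(\U_X)$ makes sense.

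Next I would set $\widetilde T:=T\times_{\Proj(\U_X)}X^{can}_a$, with projections $u\colon\widetilde T\to T$ and $v\colon\widetilde T\to X^{can}_a$; this is the resolution $\widetilde T$ of $X^{can}_a$ in the statement. Here $u$ is the base change of $p_X'$ along $r$, hence smooth and proper with connected fibres and with $\omega_{\widetilde T/T}\cong\OO_{\widetilde T}$; consequently $\widetilde T$ is smooth and, being smooth with connected fibres over the irreducible $T$, irreducible. On the other hand $v$ is the base change of $r$ along the flat open surjection $p_X'$, hence proper, and it is an isomorphism over $(p_X')^{-1}(U)$ for the dense open $U\subseteq\Proj(\U_X)$ over which $r$ is an isomorphism; as $(p_X')^{-1}(U)$ is dense in $X^{can}_a$, the map $v$ is birational, so $\widetilde T\to X^{can}_a$ is indeed a resolution.

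The heart of the argument is the discrepancy comparison. Writing $K_{T/\Proj(\U_X)}=\sum_{i\in I}a_iF_i$ with the $F_i$ the $r$-exceptional prime divisors (so that, $T\to\Proj(\U_X)$ being a log resolution, $\sum_{i\colon a_i\neq 0}F_i$ has simple normal crossing support), and using $K_{\widetilde T}=u^*K_T$ and $K_{X^{can}_a}=(p_X')^*K_{\Proj(\U_X)}$ (crepancy of $u$ and $p_X'$) together with $r\circ u=p_X'\circ v$, one obtains
\[
K_{\widetilde T/X^{can}_a}=u^*K_T-v^*(p_X')^*K_{\Proj(\U_X)}=u^*K_T-u^*r^*K_{\Proj(\U_X)}=u^*\!\Bigl(\sum_{i\in I}a_iF_i\Bigr)=\sum_{i\in I}a_i\,u^*F_i.
\]
Since $u$ is smooth, $u^*F_i=u^{-1}(F_i)$, each component of which is $v$-exceptional because its image $(p_X')^{-1}(r(F_i))$ has codimension $\geq 2$ (flatness of $p_X'$). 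Hence the exceptional divisors of $v$ with non-zero discrepancy are exactly the components of $u^{-1}\bigl(\sum_{i\colon a_i\neq 0}F_i\bigr)$, and this divisor has simple normal crossing support because $u$ is smooth and the preimage of a simple normal crossing divisor under a smooth morphism is again simple normal crossing.

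The step that needs the most care is the first one: checking that all the $\mathbb Q$-Gorenstein hypotheses are in force, so that the discrepancy equality is even meaningful, and that $p_X'$ and $u$ are crepant — this rests on Lemma \ref{lemiso}, on the triviality of $\omega_{\Alb X/\Alb Z_X}$, and on base change for relative dualizing sheaves. Once this is set up, everything else is formal manipulation with the two cartesian squares. One could alternatively observe that $p_X$, being a torsor under the abelian variety $\ker p_X$, is étale-locally trivial, which reduces the statement to the trivial fact that the product of a log resolution with a smooth variety is a log resolution with unchanged discrepancies; but the divisor computation above avoids invoking this.
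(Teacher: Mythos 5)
Your proof is correct and follows essentially the same route as the paper: form $\widetilde T$ as the fibre product, establish $K_{\widetilde T/X^{can}_a}=u^*K_{T/\Proj(\U_X)}$ (the paper's formula \eqref{pbexdiv}), and conclude via smoothness of $u$ and preservation of simple normal crossings under smooth pullback. The only difference is that you derive the key identity directly from the triviality of $\omega_{\Alb X/\Alb Z_X}$ and base change of relative dualizing sheaves, whereas the paper cites \cite{chou} and \cite{defernexhacon} for it; your version is self-contained and equally valid.
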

\begin{proof}
 We have the following cartesian diagrams by construction
\[
\xymatrix{
\widetilde T    \ar[r]^-{\widetilde{\rho}} \ar[d]^{\nu_X} & X^{can}_{a}  \ar[d]^{p_X'}  \ar[r]^{\tau_X} &\Alb X \ar[d]^{p_X} \\
T   \ar[r]^-{\rho} & \Proj \big(  \U_{X} \big) \ar[r]^-{\sigma_X} &\Alb Z_X  }
\]
and $\widetilde T$
 is a resolution of $X^{can}_{a}$. We only need to prove that the exceptional divisors of $\widetilde{\rho}$ with non-zero discrepancies are simple normal crossing. 
Note that 
\begin{equation}\label{pbexdiv}
\nu_X^* K_{\rho} = K_{\widetilde{\rho}},
\end{equation}
where $K_{\rho}$ and $K_{\widetilde{\rho}}$ are the relative canonical divisors of $\rho$ and $\widetilde{\rho}$, respectively (see \cite[Proof of Proposition 2.7]{chou} for an even more general formula based on \cite{defernexhacon}).
Therefore, every  $\widetilde{\rho}$-exceptional divisor that is not a component of $\nu_X^*F$ for any $\rho$-exceptional divisor $F$ has discrepancy equal to $0$.  
The Lemma follows because by assumption $\rho$ is a log resolution, and the pullback of a normal crossing divisor is a normal crossing divisor
 (\cite[Tag 0CBP]{stacksproject}).
\end{proof}

Now we are ready to prove \eqref{gorequiv}. 
Take $\nu_X \colon \widetilde T \rightarrow T$ as in Lemma \ref{lemmalogres} and write
\[
K_{\rho} = \sum_{i \in I} a_i F_i \quad \textrm{and} \quad   K_{\widetilde{\rho}} = \sum_{i \in I} a_i F_{X, i},
\]
where $F_{X, i} := \nu_X^* F_i = \nu_X^{-1}(F_i)$ (see \eqref{pbexdiv}). Note that, since $\nu_X$ is a smooth morphism, the $F_{X, i}$'s are reduced for all $i$. 
The same holds for $Y$, with similar notations. Indeed,  recall that 
$\Proj\bigl(\mathcal U_X\bigr)\rightarrow \Alb Z_X$ is a derived invariant thanks to Theorem \ref{thmbasechange}(2).
Therefore, 
\[
\mu_{\mathrm{Gor}}(X^{can}_{a}) = \mathbb{L}^{- n} \sum_{J \subseteq I} [F_{X, J}] \prod_{j \in J} \big( \frac{\mathbb{L} - 1}{\mathbb{L}^{a_j+1} -1} - 1 \big),
\]
and
\[
\mu_{\mathrm{Gor}}(Y^{can}_{a}) = \mathbb{L}^{- n} \sum_{J \subseteq I} [F_{Y, J}] \prod_{j \in J} \big( \frac{\mathbb{L} - 1}{\mathbb{L}^{a_j+1} -1} - 1 \big),
\]
where $n = \dim X^{can}_{a} = \dim Y^{can}_{a}$.
As in  \eqref{gro2}, we get
$[F_{X, J}] = [\ker (p_X)] [F_J]$
and $[F_{Y, J}] = [\ker (p_Y)] [F_J]$.
Hence
the equality
$[F_{X, J}] = [F_{Y, J}]$ 
 in $K_0^{\prime}(\mathrm{Var}/\mathbb{C})$ follows from Claim \ref{claim} as in the proof of Theorem \ref{thmGroclass}. In particular,  $\mu_{\mathrm{Gor}}^{\prime}(X^{can}_{a}) = \mu_{\mathrm{Gor}}^{\prime}(Y^{can}_{a})$. 
\qed

\subsection{Relative minimal models. }\label{minrelmodel}
 Given a variety $X$ such that $\omega_X$ is $a_X$-big, let  $X^{min}_{a}$ be a  relative minimal model of $X$ with respect to $a_X$,
whose existence  is guaranteed by \cite[Theorem 1.2]{bchm}. 
 Contrary to the relative canonical model, $X^{min}_{a}$ 
 is not necessarily unique. However, two $a_X$-relative minimal models of $X$ are $K$-equivalent by \cite[Variant 1.11 of Theorem 1.4]{wang} and, therefore,  they have the same Gorenstein volume \cite[Proposition 1.2]{yasuda}. We use a similar notation for $Y$. 
The above argument  proves the following  result, valid under hypotheses more general than Theorem \ref{invminimal}:
\begin{theorem}\label{thm:minimal}
Let $X$ and $Y$ be derived equivalent varieties. If $\omega_X$ is $a_X$-big, then
the equality 
\begin{equation}\label{gengor}
\mu_{\mathrm{Gor}}^{\prime}(X^{min}_{a}) = \mu_{\mathrm{Gor}}^{\prime}(Y^{min}_{a})
\end{equation}
holds  in
$\widehat{\mathcal{M}_{\mathbb{C}}^{\prime}}[\mathbb{L}^{\frac{1}{m}}],$
for $m \gg 0$.
\end{theorem}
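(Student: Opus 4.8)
The plan is to reduce the statement to Theorem \ref{gorequivthm} by using, for each of $X$ and $Y$, the relative canonical model as a bridge between the variety and its relative minimal model. First I would record that \eqref{gengor} is well-posed: by \cite[Variant 1.11 of Theorem 1.4]{wang} any two $a_X$-relative minimal models of $X$ are $K$-equivalent, and $K$-equivalent log terminal varieties have the same Gorenstein volume by \cite[Proposition 1.2]{yasuda}; hence $\mu_{\mathrm{Gor}}(X^{min}_{a})$, and so its image $\mu^{\prime}_{\mathrm{Gor}}(X^{min}_{a})$ under \eqref{gorprime}, depends only on $X$ (and likewise for $Y$). I fix $m\gg 0$ so that $mK_{X^{min}_{a}}$, $mK_{Y^{min}_{a}}$, $mK_{X^{can}_{a}}$ and $mK_{Y^{can}_{a}}$ are all Cartier, and all Gorenstein volumes below are taken in $\widehat{\mathcal{M}_{\mathbb{C}}}[\mathbb{L}^{\frac{1}{m}}]$, with primes denoting images in $\widehat{\mathcal{M}_{\mathbb{C}}^{\prime}}[\mathbb{L}^{\frac{1}{m}}]$ via \eqref{gorprime}.

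Next I would compare $X^{min}_{a}$ with $X^{can}_{a}$. Since $\omega_X$ is $a_X$-big, the divisor $K_{X^{min}_{a}}$ is relatively big over $\Alb X$, and it is relatively nef by the definition of a relative minimal model; hence by the relative basepoint-free theorem \cite[Theorem 3.24]{kollarmori} it is relatively semiample, and its relative ample model is $\Proj$ of the relative canonical ring of $X^{min}_{a}$ over $\Alb X$. Relative canonical rings being birational invariants, this ring equals $\mathcal R(a_X)$, so the ample model is exactly $X^{can}_{a}$ and we obtain a birational morphism $g_X\colon X^{min}_{a}\to X^{can}_{a}$ over $\Alb X$. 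Writing $K_{X^{min}_{a}}=g_X^*K_{X^{can}_{a}}+\sum_i a_iE_i$ with the $E_i$ the $g_X$-exceptional prime divisors, we have $a_i\ge 0$ because $X^{can}_{a}$ has canonical singularities \cite[Theorem 0-3-12]{kamama}; on the other hand $K_{X^{min}_{a}}=g_X^*A$ for a relatively ample $\mathbb{Q}$-divisor $A$ on $X^{can}_{a}$, so $\sum_i a_iE_i=g_X^*(A-K_{X^{can}_{a}})$ is effective, $g_X$-exceptional and $g_X$-numerically trivial, hence zero by the negativity lemma. Therefore $g_X$ is crepant, $X^{min}_{a}$ and $X^{can}_{a}$ are $K$-equivalent, and \cite[Proposition 1.2]{yasuda} gives $\mu_{\mathrm{Gor}}(X^{min}_{a})=\mu_{\mathrm{Gor}}(X^{can}_{a})$, whence $\mu^{\prime}_{\mathrm{Gor}}(X^{min}_{a})=\mu^{\prime}_{\mathrm{Gor}}(X^{can}_{a})$. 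The same argument applies verbatim to $Y$.

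Finally I would combine these identities with Theorem \ref{gorequivthm}, which under the hypothesis that the general fiber of $a_X$ is of general type — precisely $a_X$-bigness of $\omega_X$ — gives $\mu^{\prime}_{\mathrm{Gor}}(X^{can}_{a})=\mu^{\prime}_{\mathrm{Gor}}(Y^{can}_{a})$ in $\widehat{\mathcal{M}_{\mathbb{C}}^{\prime}}[\mathbb{L}^{\frac{1}{m}}]$. Chaining $\mu^{\prime}_{\mathrm{Gor}}(X^{min}_{a})=\mu^{\prime}_{\mathrm{Gor}}(X^{can}_{a})=\mu^{\prime}_{\mathrm{Gor}}(Y^{can}_{a})=\mu^{\prime}_{\mathrm{Gor}}(Y^{min}_{a})$ yields \eqref{gengor}. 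I expect the only point requiring care to be the identification of the relative ample model of $K_{X^{min}_{a}}$ with $X^{can}_{a}=\Proj\mathcal R(a_X)$ together with the crepancy of $g_X$; this is a standard input from the minimal model program (mirroring the argument of \S\ref{relabconj}), and once it is in place, together with the invariance of the Gorenstein volume under $K$-equivalence, the conclusion is formal.
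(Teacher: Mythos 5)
Your proposal is correct and follows essentially the same route as the paper: both reduce to the relative canonical model by showing the birational morphism $X^{min}_{a}\to X^{can}_{a}$ is crepant (so the Gorenstein volumes agree, with well-posedness coming from \cite{wang} and \cite{yasuda}), and then invoke Theorem \ref{gorequivthm}. Your use of the negativity lemma to establish crepancy is a minor variation on the paper's Hodge-index-type argument from \S\ref{relabconj}, not a genuinely different approach.
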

Indeed, as in \S \ref{relabconj}, we have a birational morphism $f \colon X^{min}_{a} \rightarrow X^{can}_{a}$ such that $K_{X^{min}_{a}} = f^* K_{X^{can}_{a}}$. Hence
$\mu_{\mathrm{Gor}}(X^{min}_{a}) = \mu_{\mathrm{Gor}}(X^{can}_{a})$, and the same happens for $Y$. 
Therefore, \eqref{gengor} follows from Theorem \ref{gorequivthm}.

\section{Further results, variants, questions}\label{ultima}

\subsection{Rouquier-stable morphisms }\label{ultima1}

Going back to the terminology of Subsection \ref{R1}, given an equivalence  $\Phi_{\E} \colon D(X) \to D(Y)$, an abelian subvariety $\widehat{B_X}$ of $\Pic0 X$ is said to be  \emph{R-stable}  if its points are $R$-stable with respect to  the Rouquier isomorphism $\varphi_\E$. Therefore the Rouquier isomorphism induces an isomorphism of abelian varieties	
\[
\varphi_\E\colon\widehat{B_X}\buildrel\sim\over\longrightarrow \widehat{B_Y}
\]
where $\widehat{B_Y}$ is the abelian subvariety of $\Pic0 Y$ such that $\varphi(\mathrm{id}_X,\widehat{B_X})=(\mathrm{id}_Y,\widehat{B_Y})$. 
Note that, in this terminology, the Albanese-Iitaka variety is the dual of an $R$-stable subvariety of $\Pic0 X$ (Subsection \ref{R3}). It turns out that Theorem \ref{step1} holds more generally, with the same proof, for any morphism to an abelian variety which is the dual of an
 $R$-stable abelian subvariety of $\Pic0 X$.To be precise, 
as in \eqref{xy},
an $R$-stable subvariety $\widehat{B_X}$ gives commutative diagrams
 \begin{equation}\label{xyvar}
 \xymatrix{ X \ar[r]^-{a_X} \ar[rd]^-{b_X} \ar[d]^-{s_X} & \Alb X \ar[d]^-{p_X} \\ 
 X' \ar[r]^-{b'_X}   & B_X} \qquad \qquad
 \qquad \xymatrix{ Y \ar[r]^-{a_Y} \ar[rd]^-{b_Y} \ar[d]^-{s_Y} & \Alb Y \ar[d]^-{p_Y} \\ 
 Y' \ar[r]^-{b'_Y} & B_Y} 
 \end{equation}
   where $p_X$ (\emph{resp}.\ $p_Y$) is the dual morphism of the inclusion  $\widehat{B_X} \subseteq \Pic0 X$ (\emph{resp}.\ $\widehat{B_Y} \subseteq \Pic0 Y$) and 
	the left-bottom sides of the diagrams
are the Stein factorizations of the composed morphisms $b_X:=p_X\circ a_X$ and $b_Y:= p_Y\circ a_Y$.  

\begin{theorem}\label{thm:step1-gen}
In the above notation, given   an $R$-stable subvariety $\widehat{B_X} \subset \Pic0 X$ (with respect to the equivalence $\Phi_\E$), there is an induced
 isomorphism $\psi \colon Y^\prime\rightarrow X^\prime$  such that following diagram 
\begin{equation}\label{xy2}
\xymatrix{X' \ar[d]^{b_X'} &Y' \ar[l]^{\psi}_\sim\ar[d]^{b_Y'}\\
B_X & B_Y \ar[l]^{\widehat{\varphi_\E}}_\sim}
\end{equation}
is commutative. 
\end{theorem}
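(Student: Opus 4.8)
The plan is to repeat the proof of Theorem \ref{step1} almost verbatim, replacing the Albanese–Iitaka data $(c_X,\Alb Z_X,\Pic0 Z_X)$ by the $R$-stable data $(b_X,B_X,\widehat{B_X})$ of the diagrams \eqref{xyvar}, and to isolate the single place where that proof used a property specific to the Albanese–Iitaka morphism. That place was the rewriting of the defining relation \eqref{ps} of the Rouquier isomorphism in terms of line bundles pulled back from the quotient abelian variety, so the first step is to record its analogue. Since $\widehat{B_X}=p_X^*\Pic0 B_X$ is $R$-stable by hypothesis, for every $\alpha\in\widehat{B_X}$ we have $\varphi_\E(\id_X,\alpha)=(\id_Y,\varphi_\E(\alpha))$ with $\varphi_\E(\alpha)\in\widehat{B_Y}=p_Y^*\Pic0 B_Y$, and \eqref{ps} becomes
\[
p^*b_X^*P'_\alpha\otimes\E\;\cong\;q^*b_Y^*Q'_{\varphi_\E(\alpha)}\otimes\E,
\]
where $P'_\alpha$ (resp. $Q'_\beta$) denotes the line bundle on $B_X$ (resp. $B_Y$) with $b_X^*P'_\alpha=P_\alpha$ (resp. $b_Y^*Q'_\beta=Q_\beta$). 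This is exactly the identity used, with $c_X,c_Y$ in place of $b_X,b_Y$, in the proof of Theorem \ref{step1}.

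With this in hand I would run the rest of that argument unchanged. Consider $\Supp(\E)=\bigcup_j\mathcal H^j(\E)$ with its reduced structure; it dominates both $X$ and $Y$ through morphisms with connected fibers (\cite[Lemma 6.4]{huybrechts}), so the projections $p'$ and $q'$ of $X'\times Y'$ restrict to dominant maps with connected fibers on $(s_X\times s_Y)(\Supp(\E))$. For a sufficiently ample $L$ on $X\times Y$ one has $R^i(s_X\times s_Y)_*(\E\otimes L)\cong(s_X\times s_Y)_*\mathcal H^i(\E\otimes L)$ and $\Supp\big(\mathbf R(s_X\times s_Y)_*(\E\otimes L)\big)=(s_X\times s_Y)(\Supp(\E))$; tensoring the displayed relation by $L$, applying $\mathbf R(s_X\times s_Y)_*$ and the projection formula gives
\[
{p'}^*{b'_X}^*P'_\alpha\otimes\mathbf R(s_X\times s_Y)_*(\E\otimes L)\;\cong\;{q'}^*{b'_Y}^*Q'_{\varphi_\E(\alpha)}\otimes\mathbf R(s_X\times s_Y)_*(\E\otimes L)
\]
for all $\alpha\in\widehat{B_X}$. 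If $p'$ contracted an irreducible curve $C\subseteq\Supp\big(R^i(s_X\times s_Y)_*(\E\otimes L)\big)$ for some $i$, then taking $\mathcal H^i$ and restricting to $C$ would force $({b'_Y}^*Q'_\beta)|_C$ to be torsion for every $\beta\in\Pic0 B_Y$, hence $C$ to be contracted by $b'_Y$; but $b'_Y$ is finite. So neither $p'$ nor, by symmetry, $q'$ contracts a curve, $\dim X'=\dim Y'$, and $(s_X\times s_Y)(\Supp(\E))$ projects isomorphically onto both $X'$ and $Y'$, i.e. it is the graph of an isomorphism $\psi:=p'\circ{q'}^{-1}\colon Y'\xrightarrow{\ \sim\ }X'$. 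For the commutativity of \eqref{xy2} I would restrict the last displayed isomorphism to the normal variety ${\rm Graph}(\psi)$ and take determinants: this produces an integer $r>0$ with $\psi^*{b'_X}^*(P'_\alpha)^{\otimes r}\cong{b'_Y}^*(Q'_{\varphi_\E(\alpha)})^{\otimes r}$, whence $\psi^*{b'_X}^*P'_\alpha\cong{b'_Y}^*Q'_{\varphi_\E(\alpha)}$ for all $\alpha\in\widehat{B_X}$, which says precisely that $b'_X\circ\psi$ and $\widehat{\varphi_\E}\circ b'_Y$ induce the same pullback on $\Pic0 B_X$, hence coincide up to translation (the morphisms $b'_X,b'_Y$ being themselves defined only up to translation, as the Albanese maps are; cf. Remark \ref{rem:upto}).

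I do not expect a genuinely new obstacle: the proof of Theorem \ref{step1} used of $c_X$ only that $\Pic0 Z_X\subseteq\Pic0 X$ is $R$-stable and that the associated Stein factorization morphism $c'_Y$ is finite, and here both hold — the first by the hypothesis on $\widehat{B_X}$, the second by construction of the Stein factorization in \eqref{xyvar}. The one implication worth spelling out, and the only spot where one cannot merely copy words, is ``$({b'_Y}^*Q'_\beta)|_C$ torsion for all $\beta$ $\Rightarrow$ $C$ contracted by $b'_Y$'': a curve not contracted by the finite morphism $b'_Y$ has image a curve in $B_Y$ whose normalization maps non-constantly to $B_Y$, hence factors through its Jacobian onto a positive-dimensional abelian subvariety of $B_Y$; dualizing, the pullback map on degree-zero Picard groups has positive-dimensional — in particular non-torsion — image, contradicting that all the $({b'_Y}^*Q'_\beta)|_C$ are torsion. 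This is the same reasoning applied to $c'_Y$ in Theorem \ref{step1}.
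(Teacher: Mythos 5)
Your proposal is correct and is exactly the paper's intended argument: the paper proves Theorem \ref{thm:step1-gen} by declaring that Theorem \ref{step1} "holds more generally, with the same proof," and you have carried out precisely that substitution, correctly isolating the only two inputs actually used about $c_X$ (the $R$-stability of $\Pic0 Z_X$, here replaced by the hypothesis on $\widehat{B_X}$, and the finiteness of the Stein factor $c'_Y$, here automatic for $b'_Y$). Your added justification of the step "torsion restrictions for all $\beta$ imply the curve is contracted" is a correct filling-in of a point the paper leaves implicit.
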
 

Also, all the results of \S \ref{H} hold more generally, with the same proof, replacing the Albanese-Iitaka morphism with any morphism 
$b_X \colon X \to B_X$ to the dual of an $R$-stable abelian subvariety. In particular, we have the following generalization of Theorem \ref{thm:invmult}.
\begin{theorem}\label{thm:invmult-new} There is an isomorphism  of relative canonical algebras
\[
\widehat{\varphi_\E}^*\mathcal R(b_X)\cong \mathcal R(b_Y).
\]
\end{theorem}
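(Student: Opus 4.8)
The plan is to repeat, mutatis mutandis, the entire argument of \S\ref{H} culminating in Theorem \ref{thm:invmult}, systematically replacing the Albanese--Iitaka data $(c_X,p_X,\Alb Z_X,\Pic0 Z_X)$ by $(b_X,p_X,B_X,\widehat{B_X})$ (and likewise for $Y$). Concretely, I would first establish the analogue of Theorem \ref{thm:rel-direct-image}, namely an isomorphism of functors $\mathbf R\,{b_Y}_*\circ\mathbf L\delta_Y^*\cong\widehat{\varphi_\E}^*\circ\mathbf R\,{b_X}_*\circ\mathbf L\delta_X^*\circ\Phi_{\E\boxtimes\ad\E}$. Feeding $\delta_{Y*}\omega_Y^{\otimes m}$ into this, and using \eqref{orlov-classical} together with the HKR isomorphism \eqref{eq:HKR} exactly as in Corollaries \ref{cor:hoch1} and \ref{cor:homology}, yields $\widehat{\varphi_\E}^*({b_X}_*\omega_X^{\otimes m})\cong{b_Y}_*\omega_Y^{\otimes m}$ for all $m$, hence the asserted isomorphism of graded $\OO_{B_Y}$-modules $\widehat{\varphi_\E}^*\mathcal R(b_X)\cong\mathcal R(b_Y)$.

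The derivation of this functorial isomorphism rested on exactly two facts in which the Iitaka fibration intervened, and both have immediate substitutes. First, the global version of \eqref{unmixed} (Theorem \ref{thm:invpoi}) was proved using only that the line bundles pulled back from $\Pic0 Z_X$ are $R$-stable; here one uses instead that the line bundles pulled back from $\widehat{B_X}$ are $R$-stable, which is precisely the hypothesis that $\widehat{B_X}$ is an $R$-stable subvariety. Second, the proof of Theorem \ref{thm:invpoi} invoked, via its footnote, the set-theoretic support statement that $(\E\lotimes\ad\E)\lotimes\OO_{X\times\{y_0\}}$ lies over the fibre of $c_X$ at the identity; this now follows from Theorem \ref{thm:step1-gen} in the same way the original followed from Theorem \ref{step1}. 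Everything else --- Lemma \ref{lem:prelim}, the composition-of-kernels formula of \cite{orlov}, and the Mukai commutation relations \cite{mukai} between FMP transforms and isogenies --- is insensitive to the choice of morphism to an abelian variety.

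For multiplicativity I would mimic the second half of the proof of Theorem \ref{thm:invmult}: apply the symmetric FMP transform $\FM_{B_Y}$ to the would-be identity of multiplication maps, reduce it to the corresponding identity of maps between $\FM_{B_X}$-transforms, and identify this fibrewise with the dual of the twisted multiplication \eqref{eq:twisted}, which Theorem \ref{para:theorem} shows is preserved by $\varphi_\E$ at the fibre of every point of $\widehat{B_Y}\times\widehat{B_Y}$; Nakayama's lemma then globalizes the statement, and the cases $k=0$ or $n=0$ are handled by the same canonical $(b_{X*}\OO_X)$-module argument. The two external inputs here are (a) the Hacon--Popa--Schnell generic vanishing theorem, which applies to $b_X\colon X\to B_X$ because $B_X$ is an abelian variety, so $\FM_{B_X}({b_X}_*\omega_X^{\otimes h})$ is a sheaf in degree $0$ for $h>0$; and (b) the reducedness of the scheme-theoretic support of $\reallywidehat{({b_X}_*\omega_X^{\otimes h})^\vee}$ for $h>0$ from the Appendix, which rests on the Chen--Jiang decomposition of ${b_X}_*\omega_X^{\otimes h}$, again available since $b_X$ maps to an abelian variety. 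I do not expect a genuine obstacle, since \S\ref{H} never used more about $c_X$ than that it is $p_X\circ a_X$ with $p_X$ a surjective homomorphism dual to an $R$-stable inclusion; the one point meriting care is checking that the Appendix's reducedness statement is phrased for an arbitrary morphism to an abelian variety, and, if not, reducing to the Albanese case via $b_X=p_X\circ a_X$ and the projection-formula identity $V^0(B_X,{b_X}_*\omega_X^{\otimes h})=\widehat{B_X}\cap V^0(X,\omega_X^{\otimes h})$.
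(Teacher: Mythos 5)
Your proposal is correct and coincides with the paper's own treatment: the paper simply asserts that all the results of \S\ref{H} hold \emph{with the same proof} once $c_X$ is replaced by any $b_X$ dual to an $R$-stable abelian subvariety, and your itemization of the only two places where the Iitaka-specific input enters (the $R$-stability of the line bundles pulled back from $\widehat{B_X}$, and the support statement now supplied by Theorem \ref{thm:step1-gen} instead of Theorem \ref{step1}) is exactly the verification the paper leaves implicit. Your closing caveat is already covered, since the Appendix and the Chen--Jiang decomposition it relies on are stated for pushforwards under arbitrary morphisms to abelian varieties.
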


An interesting problem is to determine the maximal derived invariant subvariety of $\Pic0 X$, namely the maximal $R$-stable 
abelian subvariety with respect to any derived equivalence of $X$ (by \S\ref{R3} this contains the Albanese-Iitaka variety $\Alb Z_X$).  The following lemma provides a sufficient condition to ensure that such maximal  variety is the full $\Pic0 X$.

\begin{lemma}\label{lemmaaffine}
If $\Aut0 X$ is an affine algebraic group, then $\Pic0 X$ is  $R$-stable.
\end{lemma}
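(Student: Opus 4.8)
The plan is to run a short structure-theory argument on algebraic groups, exploiting only the functoriality of the Rouquier isomorphism together with the single external input that $\Pic0 X$ and $\Pic0 Y$ are isogenous (\cite[Theorem A]{ps1}). Recall that a connected algebraic group $G$ over $\mathbb C$ carries two canonical normal subgroups: its \emph{affine part} $G_{\mathrm{aff}}$ (the largest connected normal affine subgroup, with $G/G_{\mathrm{aff}}$ an abelian variety, by Chevalley's theorem) and its \emph{anti-affine radical} $G_{\mathrm{ant}}$ (the largest anti-affine subgroup, equivalently the smallest normal subgroup with $G/G_{\mathrm{ant}}$ affine). Both are intrinsic, hence carried to the corresponding subgroups by any isomorphism of algebraic groups --- in particular by the Rouquier isomorphism $\varphi_\E\colon \Aut0 X\times\Pic0 X\buildrel\sim\over\longrightarrow \Aut0 Y\times\Pic0 Y$ --- and both are compatible with finite direct products: $(G_1\times G_2)_{\mathrm{aff}}=(G_1)_{\mathrm{aff}}\times(G_2)_{\mathrm{aff}}$, and similarly for $(\,\cdot\,)_{\mathrm{ant}}$. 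Moreover an affine group has trivial anti-affine radical, while an abelian variety $A$ satisfies $A_{\mathrm{aff}}=\{e\}$ and $A_{\mathrm{ant}}=A$.

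First I would unwind these decompositions for $G:=\Aut0 X\times\Pic0 X$. Since $\Aut0 X$ is affine and $\Pic0 X$ is an abelian variety, $G_{\mathrm{aff}}=\Aut0 X\times\{e\}$, so $G/G_{\mathrm{aff}}\cong\Pic0 X$, while $G_{\mathrm{ant}}=\{\id_X\}\times\Pic0 X$. For $G':=\Aut0 Y\times\Pic0 Y$, set $\mathrm A(Y):=\Aut0 Y/(\Aut0 Y)_{\mathrm{aff}}$, the abelian part of $\Aut0 Y$; then $G'_{\mathrm{aff}}=(\Aut0 Y)_{\mathrm{aff}}\times\{e\}$ and $G'/G'_{\mathrm{aff}}\cong \mathrm A(Y)\times\Pic0 Y$. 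Applying functoriality of the Chevalley quotient to $\varphi_\E$ produces an isomorphism of abelian varieties $\Pic0 X\cong \mathrm A(Y)\times\Pic0 Y$; in particular $\dim\Pic0 X=\dim\mathrm A(Y)+\dim\Pic0 Y$.

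Next I would invoke \cite[Theorem A]{ps1}: $\Pic0 X$ and $\Pic0 Y$ are isogenous, hence of equal dimension. Comparing with the dimension identity just obtained forces $\dim\mathrm A(Y)=0$, i.e.\ $\mathrm A(Y)=0$, i.e.\ $\Aut0 Y$ is affine as well. Consequently $(\Aut0 Y)_{\mathrm{ant}}=\{e\}$ and $G'_{\mathrm{ant}}=\{\id_Y\}\times\Pic0 Y$. Since $\varphi_\E$ maps anti-affine radical to anti-affine radical, $\varphi_\E\big(\{\id_X\}\times\Pic0 X\big)=\{\id_Y\}\times\Pic0 Y$; this says precisely that every point of $\Pic0 X$ is $R$-stable with respect to $\Phi_\E$, i.e.\ $\Pic0 X$ is $R$-stable.

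I expect the genuine content to be exactly the passage ``$\Aut0 X$ affine $\Rightarrow$ $\Aut0 Y$ affine'': all the group-theoretic bookkeeping (compatibility of $(\,\cdot\,)_{\mathrm{aff}}$ and $(\,\cdot\,)_{\mathrm{ant}}$ with products, triviality of the anti-affine radical of an affine group, functoriality of both constructions under isomorphisms) is standard and essentially formal, whereas the dimension equality $\dim\Pic0 X=\dim\Pic0 Y$ is the one non-formal ingredient. Without it one could not exclude that a nonzero abelian part of $\Aut0 Y$ absorbs part of $\Pic0 X$ under $\varphi_\E$ --- a phenomenon that does occur once the hypothesis on $\Aut0 X$ is dropped, as shown by the Fourier--Mukai equivalence between an abelian variety and its dual. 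Here the needed equality is supplied cheaply by \cite{ps1} (alternatively it follows from the derived invariance of $h^{0,1}$).
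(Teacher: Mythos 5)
Your proof is correct. The underlying mechanism is the same as the paper's: once one knows that $\Aut0 Y$ is affine, the component of $\varphi_\E(\{\id_X\}\times\Pic0 X)$ in $\Aut0 Y$ must be trivial because a connected complete group has no nonconstant maps to an affine variety --- your formulation via the anti-affine radical of $\Aut0 Y\times\Pic0 Y$ is just the intrinsic repackaging of the paper's one-line observation that ``$\Pic0 X$ is projective, hence the composition with the projection $p_1\colon\Aut0 Y\times\Pic0 Y\to\Aut0 Y$ is constant.'' The genuine divergence is in the first step: the paper simply cites \cite[p.~534]{ps1} for the implication ``$\Aut0 X$ affine $\Rightarrow$ $\Aut0 Y$ affine,'' whereas you re-derive it from \cite[Theorem A]{ps1} by comparing Chevalley quotients of the two sides of the Rouquier isomorphism and using the dimension equality $\dim\Pic0 X=\dim\Pic0 Y$ coming from the isogeny. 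This costs a little more bookkeeping (functoriality of the affine part, compatibility with products) but makes the lemma self-contained modulo only the isogeny statement, and it isolates exactly where the hypothesis on $\Aut0 X$ enters; both routes ultimately rest on the same input from \cite{ps1}, since the cited passage there is proved by essentially your argument.
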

\begin{proof}
If $\Aut0 X$ is affine, then $\Aut0Y$ is affine too (see \cite[p.\ 534]{ps1}), and,  since $\Pic0 X$ is projective, the composed morphism
\[
\{\mathrm{id}_X \} \times \Pic0 X \xrightarrow{\varphi_{\E}} \Aut0 Y \times \Pic0 Y \xrightarrow{p_1} \Aut0 Y
\]
is   constant, where $p_1$ is the  projection onto the first factor. 
\end{proof}
For instance, $\Aut0 X$ is affine  when
$\chi(X, \OO_X) \neq 0$ by \cite[Corollary 2.6]{ps1}. In conclusion, in this case the Albanese map itself satisfies the hypotheses of Theorem \ref{thm:invmult-new}, hence Theorem \ref{C} in the Introduction is greatly simplified, as we have that the Albanese relative canonical ring is preserved by the Rouquier isomorphism

\begin{corollary}\label{cor:invariance of ra} If $\chi(X,\OO_X)\ne 0$, then 
\[
\widehat{\varphi_\E}^*\mathcal R(a_X)\cong \mathcal R(a_Y).
\]
\end{corollary}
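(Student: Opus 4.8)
The plan is to reduce the statement entirely to Theorem \ref{thm:invmult-new}, the content being pushed onto the machinery of \S\ref{H} via the observation that, under the hypothesis $\chi(X,\OO_X)\neq 0$, the whole Picard variety $\Pic0 X$ is an $R$-stable abelian subvariety. First I would recall that $\chi(X,\OO_X)\neq 0$ forces $\Aut0 X$ to be an affine algebraic group, by \cite[Corollary 2.6]{ps1}. With this in hand, Lemma \ref{lemmaaffine} applies verbatim and shows that $\Pic0 X$ is $R$-stable with respect to the Rouquier isomorphism $\varphi_\E$; that is, $\varphi_\E(\mathrm{id}_X,\Pic0 X)=(\mathrm{id}_Y,\Pic0 Y)$, so in the terminology of \S\ref{ultima1} one may legitimately take $\widehat{B_X}=\Pic0 X$ and $\widehat{B_Y}=\Pic0 Y$.

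With this choice, the dual morphism $p_X\colon\Alb X\to B_X$ of the inclusion $\widehat{B_X}=\Pic0 X\hookrightarrow\Pic0 X$ is the identity, so $B_X=\Alb X$ and the associated morphism $b_X=p_X\circ a_X$ is just the Albanese map $a_X$; likewise $b_Y=a_Y$, while $\widehat{\varphi_\E}\colon\Alb Y\to\Alb X$ becomes the dual of the Rouquier isomorphism restricted to the Picard varieties. Feeding this instance of the $R$-stable setup into Theorem \ref{thm:invmult-new} then yields directly
\[
\widehat{\varphi_\E}^*\mathcal R(a_X)\cong\mathcal R(a_Y)
\]
as $\OO_{\Alb Y}$-algebras, which is exactly the assertion.

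I do not expect a genuine obstacle here: all the work has already been done in Theorem \ref{thm:invmult-new} (hence in Theorem \ref{thm:rel-direct-image} and the FMP computations of \S\ref{H}), together with the elementary fact that affineness of $\Aut0 X$ is precisely the hypothesis Lemma \ref{lemmaaffine} requires. The only point deserving a line of care is verifying that ``$\widehat{B_X}=\Pic0 X$'' really is an admissible case of the framework of \S\ref{ultima1} — i.e. that \emph{every} point of $\Pic0 X$ is $R$-stable — but this is exactly the conclusion of Lemma \ref{lemmaaffine}, so the corollary follows with no further argument.
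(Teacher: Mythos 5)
Your proposal is correct and follows exactly the paper's own route: $\chi(X,\OO_X)\ne 0$ gives affineness of $\Aut0 X$ by \cite[Corollary 2.6]{ps1}, Lemma \ref{lemmaaffine} then makes $\Pic0 X$ itself $R$-stable, and taking $\widehat{B_X}=\Pic0 X$ (so $b_X=a_X$) in Theorem \ref{thm:invmult-new} yields the isomorphism. No gaps.
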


Connected to this, another related interesting line of research, introduced in the paper \cite{liol}  in a wider context, consists in a systematic study of those derived equivalences $\Phi_\E\colon D(X)\rightarrow D(Y)$ such that the full $\Pic0 X$ is $R$-stable. In the terminology of \cite{liol}, these are \emph{strongly filtered equivalences}. An in-depth study of such derived equivalences and of invariant properties for morphisms to abelian varieties dual to $R$-stable abelian subvarieties of $\Pic0 $ will be carried out in the forthcoming paper \cite{calo}.

\subsection{Derived invariance of refined non-vanishing canonical loci. }
For each $k \geq 1$, let
\[
V^i_k(\omega_X) := \{ \alpha \in \Pic0 X \ |\ h^i(X, \omega_X \otimes P_{\alpha}) \geq k \}
\]
 and denote by $V^i_k(\omega_X)_0$ the union of the irreducible components of $V^i_k(\omega_X)$ passing through the origin (such irreducible components are in fact abelian subvarieties).  Conjectures of Popa and the second author (\cite{popa-conj1}, \cite{popa-conj2}) predict, in the present language,  that such abelian subvarieties are $R$-stable with respect to any derived equivalence and, moreover $h^i(X, \omega_X \otimes P_{\alpha})=h^i(Y, \omega_Y \otimes Q_{\varphi_\E(\alpha)}) $ for all $\alpha\in V^i_1(\omega_X)_0$. 
 Via methods introduced in \cite{popa-conj2},  when the canonical bundle is $a_X$-big this follows from Theorem \ref{invHN1}.
\begin{theorem}\label{popa-conj}
Suppose $\Phi_{\E} \colon D(X) \to D(Y)$ is an equivalence and that $\omega_X$ is $a_X$-big.
Then, for all integers $i\geq 0$ and $k\geq 1$, the varieties  $V^i_k(\omega_X)_0$ are $R$-stable with respect to $\Phi_\E$. In other words, the  Rouquier isomorphism $\varphi_{\E}$ acts on $V^i_k(\omega_X)_0$ as 
$$\varphi_{\E} \big( {\rm id}_X , V^i_k(\omega_X)_0 \big) \; = \; \big( {\rm id}_Y , V^i_k(\omega_Y)_0 \big).$$
\end{theorem}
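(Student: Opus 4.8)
The plan is as follows. Write $n=\dim X=\dim Y$ and put $V:=V^i_k(\omega_X)_0$, $W:=V^i_k(\omega_Y)_0$; by definition these are finite unions of abelian subvarieties of $\Pic0 X$, resp.\ $\Pic0 Y$, through the origin.

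\emph{Step 1: $R$-stability, and reduction.} I would first show $V\subseteq p_X^*\Pic0 Z_X$. By Proposition \ref{prop:simplified}(a) one has $a_{X*}\omega_X\cong\bigoplus_j \bigl(p_X^*\G_{X,1,j}\bigr)\otimes P_{\alpha_{1,j}}$ with $\G_{X,1,j}$ a $GV$-sheaf on $\Alb Z_X$. Since $\ker p_X$ is connected and a non-trivial degree-zero line bundle on an abelian variety is acyclic, $\mathbf R p_{X*}\bigl(p_X^*\G\otimes P_\gamma\bigr)=0$ when $\gamma\notin p_X^*\Pic0 Z_X$, and the projection formula then shows that $V^i\bigl(\Alb X,(p_X^*\G)\otimes P_\gamma\bigr)$ is a translate by $-\gamma$ of a closed subset of $p_X^*\Pic0 Z_X$ for every coherent $\G$ on $\Alb Z_X$. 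Hence every irreducible component of $V^i(\omega_X)=V^i(\Alb X,a_{X*}\omega_X)$ through the origin lies in $p_X^*\Pic0 Z_X$, and therefore so does every component of $V^i_k(\omega_X)$ through the origin, i.e.\ $V\subseteq p_X^*\Pic0 Z_X$. By \eqref{eq:PicZ} the Rouquier isomorphism carries $p_X^*\Pic0 Z_X$ onto $p_Y^*\Pic0 Z_Y$ acting as $(\mathrm{id}_X,\cdot)\mapsto(\mathrm{id}_Y,\cdot)$; thus $V$ is $R$-stable and $\varphi_\E(\mathrm{id}_X,V)=(\mathrm{id}_Y,V')$ with $V':=\varphi_\E(V)$ a finite union of abelian subvarieties of $p_Y^*\Pic0 Z_Y$ through the origin. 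Running the same argument for $\Phi_\E^{-1}$ shows $W$ is $R$-stable, so it suffices to prove $V'\subseteq V^i_k(\omega_Y)$: this forces $V'\subseteq W$ (each component of $V'$ is irreducible and meets the origin), and by symmetry $\varphi_\E^{-1}(W)\subseteq V$, whence $W\subseteq V'$ and $V'=W$.

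\emph{Step 2: reduction to generic values.} Fix a component $A\subseteq V$ (an abelian subvariety) and set $A':=\varphi_\E(A)\subseteq V'$. Let $e_X$ be the generic (equivalently minimal) value of $\gamma\mapsto h^i(X,\omega_X\otimes P_\gamma)$ on the irreducible $A$, and $e_Y$ the generic value of $h^i(Y,\omega_Y\otimes Q_\cdot)$ on $A'$; since $A\subseteq V^i_k(\omega_X)$ we have $e_X\ge k$. If $e_X=e_Y$, then $e_Y\ge k$ and upper semicontinuity forces $h^i(Y,\omega_Y\otimes Q_\beta)\ge e_Y\ge k$ for every $\beta\in A'$, i.e.\ $A'\subseteq V^i_k(\omega_Y)$; letting $A$ vary this gives $V'\subseteq V^i_k(\omega_Y)$. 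So I must show $e_X=e_Y$. Let $U\subseteq A$, $U'\subseteq A'$ be the dense open loci where the functions equal $e_X$, resp.\ $e_Y$, and put $U_0:=U\cap\varphi_\E^{-1}(U')$. Using the Manin--Mumford theorem to describe the torsion points of $A$ outside $U_0$ (they lie on finitely many torsion translates of proper abelian subvarieties), one finds, for every sufficiently large prime $p$, a torsion point $\alpha\in A$ of order $p$ all of whose non-zero multiples lie in $U_0$; set $\beta:=\varphi_\E(\alpha)$, of order $p$, which is $R$-stable because $\alpha\in A\subseteq p_X^*\Pic0 Z_X$.

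\emph{Step 3: cyclic covers.} Let $\pi\colon X_\alpha\to X$ and $\varrho\colon Y_\beta\to Y$ be the connected cyclic \'etale covers of degree $p$ attached to $P_\alpha$ and $Q_\beta$. Then $\omega_{X_\alpha}=\pi^*\omega_X$ and $\pi_*\OO_{X_\alpha}=\bigoplus_{l=0}^{p-1}P_{l\alpha}$, so by Serre duality
\[
h^{0,n-i}(X_\alpha)\;=\;h^i(X_\alpha,\omega_{X_\alpha})\;=\;\sum_{l=0}^{p-1}h^i\bigl(X,\omega_X\otimes P_{l\alpha}\bigr)\;=\;h^{0,n-i}(X)+(p-1)\,e_X,
\]
the last equality because $l\alpha\in U$ for $1\le l\le p-1$; likewise $h^{0,n-i}(Y_\beta)=h^{0,n-i}(Y)+(p-1)\,e_Y$, using $l\beta=\varphi_\E(l\alpha)$ and $l\alpha\in\varphi_\E^{-1}(U')$. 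Next, $\omega_{X_\alpha}$ is $a_{X_\alpha}$-big: a general fibre $F$ of $a_X$ is of general type, $\pi^{-1}(F)$ is an \'etale cover of $F$ hence of general type, and it fibres over a translate of an abelian variety with general fibre a general fibre of $a_{X_\alpha}$, so easy addition forces the latter to be of general type (and similarly for $Y_\beta$). Finally $D(X_\alpha)\cong D(Y_\beta)$: since $\varphi_\E(\mathrm{id}_X,\alpha)=(\mathrm{id}_Y,\beta)$, \eqref{ps} gives $p^*P_\alpha\otimes\E\cong q^*Q_\beta\otimes\E$, whence $\Phi_\E(-\otimes P_\alpha)\cong\Phi_\E(-)\otimes Q_\beta$, so $\Phi_\E$ intertwines the $\ZZ/p$-actions by tensorization with powers of $P_\alpha$ and $Q_\beta$ and therefore lifts to an equivalence of the associated equivariant categories, which are $D(X_\alpha)$ and $D(Y_\beta)$ by \'etale descent along $\pi$ and $\varrho$ (this is the mechanism of \cite{popa-conj2}). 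Applying Theorem \ref{invHN1} to $D(X_\alpha)\cong D(Y_\beta)$ and to $D(X)\cong D(Y)$ gives $h^{0,n-i}(X_\alpha)=h^{0,n-i}(Y_\beta)$ and $h^{0,n-i}(X)=h^{0,n-i}(Y)$; subtracting, $(p-1)e_X=(p-1)e_Y$, so $e_X=e_Y$, which by Step 1 completes the proof.

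I expect the main obstacle to be the derived equivalence of the covers in Step 3: one must upgrade the intertwining isomorphism $\Phi_\E(-\otimes P_\alpha)\cong\Phi_\E(-)\otimes Q_\beta$ to an equivalence of $\ZZ/p$-equivariant categories compatibly with the chosen trivialisations $P_\alpha^{\otimes p}\cong\OO_X$, $Q_\beta^{\otimes p}\cong\OO_Y$ (this is exactly where the methods of \cite{popa-conj2} enter); the remaining ingredients ($a_{X_\alpha}$-bigness of $\omega_{X_\alpha}$ and the choice of torsion point via Manin--Mumford) are routine.
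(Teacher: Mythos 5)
Your Steps 2--3 correctly reproduce the mechanism of \cite[Theorem 3.1]{popa-conj2} together with the paper's actual new input (Theorem \ref{invHN1} applied to the cyclic covers $X_\alpha$, $Y_\beta$, plus the observation that $a_X$-bigness passes to $X_\alpha$); the paper's own proof consists precisely of citing that reduction and verifying its hypothesis in this way. The problem is Step 1. The identity $V^i(X,\omega_X)=V^i(\Alb X,a_{X*}\omega_X)$ that you rely on is false for $i>0$ once $a_X$ has positive-dimensional fibres: by Koll\'ar's splitting, $H^i(X,\omega_X\otimes P_\gamma)\cong\bigoplus_j H^{i-j}(\Alb X,R^ja_{X*}\omega_X\otimes P_\gamma)$, so all higher direct images contribute. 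For instance, for $X=S\times E$ with $S$ a minimal surface of general type with $q(S)=0$ (so $\omega_X$ is $a_X$-big), one has $V^2(X,\omega_X)=\{0\}$ while $V^2(\Alb X,a_{X*}\omega_X)=\emptyset$. Proposition \ref{prop:simplified} decomposes only $R^0a_{X*}\omega_X^{\otimes m}$, and the paper provides no analogue controlling $R^ja_{X*}\omega_X$ for $j>0$ by quotients factoring through $p_X$ (the inputs \eqref{eq:non-van1}--\eqref{eq:non-van2} are statements about $V^0$ only). So your claimed containment $V^i_k(\omega_X)_0\subseteq p_X^*\Pic0 Z_X$ is unproven, and with it the $R$-stability of the locus and of the chosen torsion point $\alpha$.

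This is not a repairable detail but the crux of the theorem: that points of $V^i(\omega_X)_0$ with $i>0$ are $R$-stable --- i.e.\ that $\varphi_\E(\mathrm{id}_X,\alpha)=(h,\beta)$ forces $h=\mathrm{id}_Y$ --- is exactly what is \emph{not} known a priori (\cite[Proposition 3.1]{lombardi-derived} gives it only for points of $V^0(\omega_X^{\otimes m})$), and deducing it from the invariance of the $h^{0,j}$ of the covers is the nontrivial content of \cite[Theorem 3.1]{popa-conj2}, whose proof must confront the possibility $h\neq\mathrm{id}_Y$ rather than assume it away. Your Step 3 also uses this unestablished $R$-stability to obtain the intertwining $\Phi_\E(-\otimes P_\alpha)\cong\Phi_\E(-)\otimes Q_\beta$ and hence the equivalence $D(X_\alpha)\cong D(Y_\beta)$. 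Either import \cite[Theorem 3.1]{popa-conj2} as a black box, as the paper does, or supply a genuine argument for the $R$-stability; the route through $p_X^*\Pic0 Z_X$ does not work as written.
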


\begin{proof}
The proof  follows from the proof of \cite[Theorem 3.1]{popa-conj2}, where it is shown that the assertion follows from the invariance of the Hodge numbers $h^{0,i}$ of  all derived equivalent varieties $X_\alpha$ and $Y_\beta$, which are \'etale covers of $X$ (\emph{resp}. $Y$), induced by torsion points $\alpha\in\Pic0 X$ (\emph{resp}. $\beta\in \Pic0 Y$). But the $a_X$-bigness of the canonical bundle of $X$ implies the same property
 for $X_\alpha$. Hence, by Theorem \ref{invHN1}, $h^{0,i}(X_\alpha)=h^{0,i}(Y_\beta)$ as soon as there is an equivalence  $D(X_\alpha)\rightarrow D(Y_\beta)$.
\end{proof}

\section{Appendix: the Chen-Jiang decomposition and the torsion filtration}\label{rem:support} 
 Chen-Jiang decompositions as \eqref{eq:CJ} hold more generally 
 for pushforwards of pluricanonical sheaves under any morphism $f\colon X\rightarrow A$ 
 from a smooth projective complex variety to an abelian variety (\cite[Theorem C]{loposc}). 
 They appear to be quite useful in the study of the birational geometry of irregular varieties. 
 Here we point out a useful feature of them, used in the proof of Proposition \ref{prop:pop}.

More generally, let $\F$ be a coherent 
sheaf on an abelian variety $A$ having 
 \emph{the Chen-Jiang decomposition property} (this notion is introduced in \cite[\S 3]{loposc}), \emph{i.e.} admitting a direct-sum decomposition as in \eqref{eq:CJ}, namely
\begin{equation}\label{FCJ}
\F \; \cong \;    \bigoplus_i \rho_{i}^* \mathcal{F}_{i} \otimes P_{\gamma_{i}}, 
\end{equation}
satisfying the four properties listed in the proof of Proposition \ref{prop:simplified}, namely:
\begin{itemize}
\item[(i)]  each $\rho_{i} \colon A \rightarrow B_{i}$ is a quotient morphism of abelian varieties with connected fibers;\\
\item[(ii)] each $\mathcal{F}_{i}$ is a nonzero $M$-regular sheaf  on $B_{i}$ (see \S\ref{R4});\\
\item[(iii)] each $\gamma_{i} \in \Pic0 X$ is a torsion point;\\
\item[(iv)]  $\rho_{i}^*\Pic0 B_{i}-\gamma_{i} \ne   \rho_{j}^*\Pic0 B_{j} -\gamma_{j}$\  for $i\ne j$.
\end{itemize}

 Applying $\FM_{A}$ to \eqref{FCJ} we get that
\begin{equation}\label{FM(FCY)}
\reallywidehat{\F^\vee} \; \cong \;  \bigoplus_{i}t_{\gamma_{i}*}({\widehat{\rho}}_{i*}\reallywidehat{\F_{i}^\vee}).
\end{equation}
 This follows from  the commutativity relation between FMP transforms and morphisms of abelian varieties  $\pi\colon A\rightarrow B$ 
\begin{equation}\label{eq:commutation1}
 \FM_A\circ {\bf L}\pi^* \; \cong \; {\bf R}\hat{\pi}_*\circ \FM_B
\end{equation}
(\cite[Proposition 4.1]{schnell}), and the formula
\begin{equation}\label{eq:translation}
\FM_A( - \otimes P_\beta) \; \cong \; t_{\beta*}\circ \FM_A \quad \forall \beta \in \Pic0 A
\end{equation}
(\cite[Proposition 5.1]{schnell}). 
Note that, since the sheaves $\F_i$ are $M$-regular, the sheaves $t_{\gamma_{i}*}({\widehat{\rho}}_{i*}\reallywidehat{\F_{i}^\vee})$ are of pure dimension equal to $\dim B_i$ and their scheme-theoretic support are the translated subtori 
\begin{equation}\label{eq:scheme-supp}
-V^0 \big(t_{\gamma_i}^*(\rho_i^*\F_i) \big) \; = \; \rho_i^*\Pic0 B_i+ \gamma_i
\end{equation}
with the reduced scheme structure.

We consider the torsion filtration of the sheaf $\reallywidehat{\F^\vee}$
\begin{equation}\label{torsion-fil}
0 \; \subset \; T_0(\reallywidehat{\F^\vee}) \; \subset \; \cdots \; \subset\; T_d(\reallywidehat{\F^\vee}) \; = \; \reallywidehat{\F^\vee}
\end{equation}
where $d=\dim \reallywidehat{\F^\vee}$ and $T_i(\reallywidehat{\F^\vee})$ is the maximal subsheaf of $\reallywidehat{\F_i^\vee}$ of dimension $\le i$  (see \cite[\S1.1]{huybrechts-lehn}). It follows that $d$ is equal to the maximal dimension of the tori $B_i$. In particular $d=\dim A$ if and only if there is a (necessarily unique, by condition (iv)) $i$ such that $B_i=A$. 

\begin{proposition}
For all $k\le d$ we have that
\begin{equation}\label{eq:T1}
T_k \big( \reallywidehat{\F^\vee} \big) \; \cong \; \bigoplus_{\dim B_i\le k} t_{\gamma_{i}*} \big( 
{\widehat{\rho}}_{i*}\reallywidehat{\F_{i}^\vee} \big) 
\end{equation}
and
\begin{equation}\label{eq:T2}
T_k \big( \reallywidehat{\F^\vee} \big) \; \cong \;  \bigl(T_{k-1} \big( \reallywidehat{\F^\vee} \big) \bigr) \oplus
 \bigl(T_{k} \big( \reallywidehat{\F^\vee} \big) / T_{k-1} \big( \reallywidehat{\F^\vee} \big) \bigr) 
 \cong \bigl(\bigoplus_{\dim B_i\le k-1} t_{\gamma_{i}*} \big( {\widehat{\rho}}_{i*}\reallywidehat{\F_{i}^\vee} \big) \bigr)
 \oplus\bigl(\bigoplus_{\dim B_i= k} t_{\gamma_{i}*} \big( {\widehat{\rho}}_{i*}\reallywidehat{\F_{i}^\vee} \big) \bigr).
\end{equation}
\end{proposition}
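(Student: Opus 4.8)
The plan is to deduce both isomorphisms directly from the decomposition \eqref{FM(FCY)} together with the purity of its summands, so that the proof reduces to an elementary observation about torsion filtrations of finite direct sums. Set $\G_i := t_{\gamma_{i}*}({\widehat{\rho}}_{i*}\reallywidehat{\F_{i}^\vee})$, so that $\reallywidehat{\F^\vee} \cong \bigoplus_i \G_i$ by \eqref{FM(FCY)}. First I would record, as already remarked around \eqref{eq:scheme-supp}, that each $\G_i$ is a nonzero sheaf which is pure of dimension $\dim B_i$: indeed $\F_i$ is $M$-regular, so $\reallywidehat{\F_i^\vee}$ is a nonzero torsion-free sheaf on the irreducible variety $\Pic0 B_i$, hence pure of dimension $\dim \Pic0 B_i = \dim B_i$; the morphism $\widehat{\rho}_i \colon \Pic0 B_i \hookrightarrow \Pic0 A$ is a closed embedding because $\rho_i$ is surjective, so ${\widehat{\rho}}_{i*}\reallywidehat{\F_i^\vee}$ is still pure of dimension $\dim B_i$; and $t_{\gamma_i}$ is an isomorphism.

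Next I would prove the elementary fact that for a finite direct sum of coherent sheaves one has $T_k\bigl(\bigoplus_i \G_i\bigr) = \bigoplus_i T_k(\G_i)$, where $T_k(-)$ denotes the maximal subsheaf of dimension $\le k$ (as in \cite[\S1.1]{huybrechts-lehn}). One inclusion is clear since $\bigoplus_i T_k(\G_i)$ has dimension $\le k$. For the other, let $S \subseteq \bigoplus_i \G_i$ be a subsheaf of dimension $\le k$; for each projection $p_i$ the image $p_i(S) \subseteq \G_i$ is a quotient of $S$, hence of dimension $\le k$, so $p_i(S) \subseteq T_k(\G_i)$, and therefore $S \subseteq \bigoplus_i p_i(S) \subseteq \bigoplus_i T_k(\G_i)$.

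Combining the two points gives \eqref{eq:T1}: since $\G_i$ is pure of dimension $\dim B_i$, every nonzero subsheaf of $\G_i$ has dimension exactly $\dim B_i$, so $T_k(\G_i) = \G_i$ when $\dim B_i \le k$ and $T_k(\G_i) = 0$ when $\dim B_i > k$; hence $T_k(\reallywidehat{\F^\vee}) = \bigoplus_{\dim B_i \le k}\G_i$. Then \eqref{eq:T2} is immediate: $T_{k-1}(\reallywidehat{\F^\vee}) = \bigoplus_{\dim B_i \le k-1}\G_i$ is a direct summand of $T_k(\reallywidehat{\F^\vee}) = \bigoplus_{\dim B_i \le k}\G_i$ with complementary summand $\bigoplus_{\dim B_i = k}\G_i$, and this complement is canonically isomorphic to $T_k(\reallywidehat{\F^\vee})/T_{k-1}(\reallywidehat{\F^\vee})$. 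I do not expect a genuine obstacle here; the only point requiring care is having the purity of the summands $\G_i$ firmly in place, which is exactly what the discussion preceding the Proposition provides.
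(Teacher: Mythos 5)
Your proof is correct, and it rests on exactly the same input as the paper's --- the purity of each summand $\G_i := t_{\gamma_{i}*}({\widehat{\rho}}_{i*}\reallywidehat{\F_{i}^\vee})$, which is recorded just before the Proposition --- but it is organized differently. The paper argues by descending induction on $k$, starting from the trivial case $k=d$: at each step it observes that $\bigoplus_{\dim B_i\le k-1}\G_i$ is contained in $T_{k-1}(\reallywidehat{\F^\vee})$ and that the natural map $\bigoplus_{\dim B_i=k}\G_i\rightarrow T_k(\reallywidehat{\F^\vee})/T_{k-1}(\reallywidehat{\F^\vee})$ is injective. You instead isolate the clean general lemma that the torsion filtration commutes with finite direct sums, $T_k\bigl(\bigoplus_i\G_i\bigr)=\bigoplus_i T_k(\G_i)$ (proved correctly via the projections $p_i$), and then read off \eqref{eq:T1} from the dichotomy $T_k(\G_i)=\G_i$ or $0$ according to whether $\dim B_i\le k$, with \eqref{eq:T2} following at once. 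Your route avoids the induction entirely and makes explicit some details (purity of $\widehat{\rho}_{i*}$ of a torsion-free sheaf under a closed immersion, the direct-sum compatibility) that the paper's two-line proof leaves implicit; the paper's version is terser but proves nothing more general. There is no gap in your argument.
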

\begin{proof}
For $k=d$ \eqref{eq:T1} is obvious while  \eqref{eq:T2} holds because there is an inclusion of sheaves
\[
\bigoplus_{\dim B_i\le d-1} t_{\gamma_{i}*} \big( {\widehat{\rho}}_{i*}\reallywidehat{\F_{i}^\vee} \big) \subset T_{d-1} \big( \reallywidehat{\F^\vee} \big)
\]
 and the natural  map 
 \[
 \bigoplus_{\dim B_i= d} t_{\gamma_{i}*} \big( {\widehat{\rho}}_{i*}\reallywidehat{\F_{i}^\vee} \big) \rightarrow T_{d} \big( \reallywidehat{\F^\vee} \big)/ T_{d-1} \big( \reallywidehat{\F^\vee} \big) 
 \] 
 is injective. For  $k<d$, \eqref{eq:T1} holds by induction and \eqref{eq:T2} follows as above. 
\end{proof}
In particular, it follows that, via the symmetric FMP transform $FM_A$, the Chen-Jiang decomposition  of a sheaf $\F$ is recovered by  the torsion filtration of $\reallywidehat{\F^\vee}$, and conversely.

\providecommand{\bysame}{\leavevmode\hbox
to3em{\hrulefill}\thinspace}

\end{document}